\tikzset{%
  mleftdelimiter/.style={inner ysep=0pt, inner xsep=1ex,left delimiter=\{,label={[label distance=3mm]left:#1}}
}
\newcommand{\N}{\mathbb{N}}
\newcommand{\R}{\mathbb{R}}
\newcommand{\Q}{\mathbb{Q}}
\newcommand{\cP}{\mathcal{P}}
\newcommand{\cC}{\mathcal{C}}
\newcommand{\cF}{\mathcal{F}}
\newcommand{\cG}{\mathcal{G}}
\newcommand{\cH}{\mathcal{H}}
\newcommand{\cA}{\mathcal{A}}
\newcommand{\cB}{\mathcal{B}}
\newcommand{\cQ}{\mathcal{Q}}
\newcommand{\cS}{\mathcal{S}}
\newcommand{\1}{\mathds{1}}
\newcommand{\de}{\mathrm{\,d}}
\newcommand{\V}{\mathrm{Var_{V}}}
\newcommand{\Var}{\mathrm{Var_{HK}}}
\newcommand{\setcounterprefix}[2]{%
  \setcounter{#1}{0}%
  \expandafter\def\csname theH#1\endcsname{#2.\arabic{#1}}
  \expandafter\def\csname the#1\endcsname{#2.\arabic{#1}}%
}
\DeclareMathOperator{\v@r}{V@R}
\DeclareMathOperator{\av@r}{AV@R}
\newtheorem{theorem}{Theorem}[section]
\newtheorem{proposition}[theorem]{Proposition}
\newtheorem{assumption}[theorem]{Assumption}
\newtheorem{corollary}[theorem]{Corollary}
\newtheorem{definition}[theorem]{Definition}
\newtheorem{lemma}[theorem]{Lemma}
\newtheorem{example}[theorem]{Example}
\newtheorem{remark}[theorem]{Remark}
\begin{document}

\begin{frontmatter}



\title{On a version of a multivariate integration by parts formula \\ for Lebesgue integrals}


\author[label1]{Jonathan Ansari}

\affiliation[label1]{organization={Department of Artificial Intelligence and Human Interfaces \\ Paris Lodron Universität Salzburg},
            country={Austria},
            }

%

\begin{abstract}
Multidimensional integration by parts formulas apply under the standard assumption that one of the functions is continuous and the other has bounded Hardy-Krause variation.
Motivated by recently developed results in the probabilistic context of price and risk bounds, this paper provides a version of an integration by parts formula for the Lebesgue integral of measure-inducing functions which may both be discontinuous and may have infinite Hardy-Krause variation.
To this end,  we give a general definition of measure-inducing functions and establish various of their properties,  such as a characterization in terms of \(\Delta\)-monotone functions.
As a consequence of the integration by parts formula, several convergence results are provided, allowing an extension of the Lebesgue integral of a measure-inducing function to the case where one integrates with respect to a continuous semi-copula. 
The latter class of aggregation functions includes quasi-copulas which serve as bounds for the dependence structure in many applications.
\end{abstract}

%

\begin{keyword}
capacity
\sep
\(\Delta\)-monotone function
\sep
Hardy-Krause variation
\sep
measure-inducing function
\sep
quasi-copula
\sep
semi-copula 


\end{keyword}

\end{frontmatter}




\section{Introduction}\label{intro}

Integration by parts is a standard tool in analysis. In the one-dimensional case, well-known conditions on functions \(f,g\colon [0,1]\to \R\), for which a formula
\begin{align}\label{IP1dim}
\int_0^1 f(x) \de g(x) = f(1)g(1)-f(0)g(0)-\int_0^1 g(x) \de f(x)
\end{align}
is valid, are continuity of one of the functions and bounded variation of the other one. A characterization of the convergence of the above Stieltjes integrals and, thus, for formula \eqref{IP1dim} to hold true can be found in \cite[Theorems II.11.7 and II.13.17] {Hildebrandt-1963}.
A multivariate extension of \eqref{IP1dim} for Stieltjes integrals is based on Abel's transform and goes back to \cite{Young-1917}. Then, if one of the functions is continuous, a necessary and sufficient condition for the convergence of the integrals is that the other function has bounded Hardy-Krause variation, which also accounts for bounded variation of the function in the sense of Vitali on all lower dimensional faces of the domain, see \cite[Proposition 2]{Zaremba-1968}.
We refer to  \cite{Appell-2014,Adams-1933} for a survey on several concepts of bounded variation and to \cite{Aistleitner-2015,Aistleitner-2017,Basu-2016,Hardy-1905,
Krause-1903,Owen-2005} for the Hardy-Krause variation. 
In the two-dimensional case, for functions \(f,g\colon [0,1]^2\to \R\,,\) one of several variants of an integration by parts formula is
\begin{align*}
\int_{0,0}^{1,1} f(x,y)\de g(x,y) = f(0,0) \big[g\big]_{0,0}^{1,1} +\int_{0,0}^{1,1} \big[g\big]_{x,y}^{1,1} \de f(x,y) + \int_0^1 \big[g\big]_{x,0}^{1,1} \de f(x,0) + \int_0^1 \big[g\big]_{0,y}^{1,1} \de f(0,y)\,,
\end{align*}
where \(\big[g\big]_{a_1,a_2}^{b_1,b_2} := g(a_1,b_1)-g(a_1,b_2)-g(a_2,b_1)+g(a_2,b_2)\,,\) see \cite{Young-1917}. If \(g\) is grounded, i.e., if \(g(x,y)=0\) whenever \(x=0\) or \(y=0\,,\) then the right-hand side of the above formula simplifies to
\begin{align*}
 f(0,0) g(1,1) +\int_{0,0}^{1,1} \big[g\big]_{x,y}^{1,1} \de f(x,y) + \int_0^1 [g(1,1)-g(x,1)] \de f(x,0) + \int_0^1 [g(1,1)-g(1,y)] \de f(0,y)\,.
\end{align*}
Recent developments in probability motivate a version of an integration by parts formula for the Lebesgue integral on a possibly non-compact domain \(\Xi=\bigtimes_{i=1}^d \Xi_i\) for intervals \(\Xi_1,\ldots,\Xi_d\subseteq \R\), where the integrator \(g\colon \Xi \to \R\) is a right-continuous, bounded and grounded function, for example a \(d\)-variate distribution function or a composition \(g=Q\circ(F_1,\ldots,F_d)\) of a quasi-copula \(Q\) with univariate distribution functions \(F_i\colon \Xi_i\to [0,1]\,,\) \(i\in \{1,\ldots,d\}\,.\)
Several applications in financial mathematics can be found in the context of price bounds in a model-free setting under incorporating market-implied dependence information like knowledge of correlations or option prices \cite{Ansari-2021,Lux-2017,Yanez-2020,Tankov-2011}. In such settings,  the distributional bounds are often given by quasi-copulas, which generally do not induce a signed measure 
 \cite{Durante-2016,Nelsen-2011,Nelsen-2006,Nelsen-2004,Nelsen-2010}.
Further, it is desirable that the integrand \(f\colon \Xi \to \R\) is not assumed to be continuous, for example, when one determines improved price bounds for an option with a discontinuous payoff function.

The following main result provides general conditions on possibly discontinuous measure-inducing functions \(f,g\colon \Xi\to \R\) for which an integration by parts formula is valid. For \(I\subseteq\{1,\ldots,d\}\,,\) the lower \(I\)-marginal of a function \(h\) is denoted by \(h_I\), see \eqref{defqmargl}, 
the integrator \(\de \nu_h\) refers to the Lebesgue integral with respect to the signed measure \(\nu_h\) induced by the function \(h\) due to \eqref{defmind},
and \(\overline{h}\) is the survival function associated with \(h\,,\) see \eqref{defsurvfun}. The classes \(\cF_{mi}^c(\Xi)\) and \(\cF_{mi}^{c,l}(\Xi)\) of measure-inducing functions are defined in \eqref{defclassmi} and \eqref{defclassmif}. Note that the concept of a measure-inducing function in \eqref{defmind} is different from the familiar notion of a measure-generating function, see \eqref{defmgfun}.


%

\begin{theorem}[Multivariate integration by parts]\label{proppif}~\\
For \(f \in \cF_{mi}^{c,l}(\Xi)\) and \(g\in \cF_{mi}^c(\Xi)\,,\) assume that \(f_\emptyset\) exists and that \(g\) is bounded and grounded. 
If \(f\) and \(g\) have no common discontinuities on the same side of each point, then
\begin{align}\label{genformi1}
\int_\Xi f(x)\de \nu_g = \sum_{I\subseteq \{1,\ldots,d\}\atop I\ne \emptyset} 
\int_{\Xi_I} \overline{g}_I(u)\de \nu_{f_I}(u)
+ \overline{g}_\emptyset f_\emptyset\,,
\end{align}
whenever all integrals exist.
\end{theorem}
The functions \(f\) and \(g\) in the above theorem are assumed to have no common discontinuities on the same side of each point (see \eqref{defcomdiscy} for the definition) which is satisfied, for example, if one of the functions is continuous or if one is left-continuous and the other right-continuous. In the case where \(f\) and \(g\) do have common discontinuities on the same side of some points, an additional term respecting the jumps has to be included, see Example \ref{exajumdis}. 

Under some integrability conditions on the function \(g\,,\) the right-hand side of \eqref{genformi1} is closed with respect to pointwise convergence of the integrands. Hence, Theorem \ref{proppif} allows an extension of the Lebesgue integral \(\int_\Xi f \de \nu_g\) to the case where \(g\) does not induce a signed measure, for example, when \(g\) is a quasi-copula or, more generally, a (Lipschitz-)continuous semi-copula; see Theorems \ref{theconvfin} and \ref{theconvthsec}, which can also be interpreted as a version of a dominated convergence theorem for the integrator.
In particular, formula \eqref{genformi1} implies a version of the standard integration by parts formula for Stieltjes integrals where \(\Xi=[0,1]^d\) is compact and where one of the functions is continuous and the other has bounded Hardy-Krause variation.
Note that for a non-compact domain \(\Xi\), the function \(f\) in Theorem \ref{proppif} may also have unbounded Hardy-Krause variation.

Dispensing with the continuity assumption, an integration by parts formula for functions in two variables is given for weak net integrals in \cite[Theorem III.8.8]{Hildebrandt-1963}, see \cite[Proof of Theorem 3b]{Rueschendorf-1980} for an application of this formula to stochastic orderings. Some general sufficient conditions for the convergence of weak net integrals can be found in \cite[Section III.8]{Hildebrandt-1963} in terms of net integrals.
Several versions of two-dimensional integration by parts formulas for Lebesgue integrals where both the integrator and the integrand induce positive measures can be found in \cite{Durante-2016,Nelsen-2007,Li-2002,Tankov-2011}. An extension to signed measures is given for the bivariate case in \cite{Berghaus-2017} and for the multidimensional case in \cite{Radulovic-2017} for finite signed measures and in \cite{Lux-2017b,Lux-2017,Yanez-2020} for not necessarily finite signed measures. In the latter three references, however, the construction is not precise and their formulas do not apply under the general assumptions claimed there.

In this paper, we fix these inaccuracies and provide a general framework for an integration by parts formula for Lebesgue integrals. The underlying functions are defined on a possibly unbounded domain and are not assumed to be continuous. As motivated above, we assume that the integrator \(g\) is grounded and bounded.
In Section \ref{secmif}, we give a precise definition of a measure-inducing function and outline their basic properties that are used for the main results of the paper. For a detailed study of measure-inducing functions and for several examples, we refer to Appendix A.
In Sections \ref{secmr} and \ref{secvonv}, we provide the main contributions of the paper, that are, an integration by parts formula for measure-inducing functions, a transformation formula for the integration by parts operator, and several convergence results that allow an extension of the Lebesgue integral of a measure-inducing function to the case where one integrates w.r.t. a function that does not necessarily induce a signed measure like a Lipschitz-continuous semi-copula. All proofs are transferred to Appendix B.

%
%
%
%
%

\section{Measure-inducing functions}\label{secmif}

We begin with a general definition of a measure-inducing function. Then we provide the necessary notation and outline several properties of measure-inducing functions, which are needed for the integration by parts formulas in \eqref{genformi1} and in Sections \ref{secmr} and \ref{secvonv}. 

%
%
%
%
%
%
%
%
%

\subsection{Definition of a measure-inducing function}

It is well-known that every right-continuous, non-decreasing function \(F\colon \R\to[0,1]\) with \(\lim_{x\to -\infty} F(x)=0\) and \(\lim_{x\to \infty} F(x)=1\) induces a uniquely determined probability measure \(P\) on the Borel \(\sigma\)-algebra of \(\R\) such that \(P((x,y])=F(y)-F(x)\) for all \(x,y\in \R\) with \(x\leq y\,.\) Dispensing with the assumption of right-continuity, \(P\) is still uniquely determined through \(F\) by
\begin{align}\label{deffP}
P([x,y])=\lim_{y'\downarrow y}F(y') - \lim_{x'\uparrow x} F(x')\,, ~~~ \text{for all } x,y\in \R \text{ such that } x\leq y\,,
\end{align}
i.e., the value of each closed interval is approximated from the outside.
If \(F\) has bounded variation and is no more assumed to be non-decreasing, then, in general, \(P\) defined by \eqref{deffP} is a finite signed measure. 
However, if \(F\) has unbounded variation (but is still bounded), it does not necessarily induce a signed measure by \eqref{deffP}.

For deriving a multidimensional integration by parts formula for Lebesgue integrals in Section \ref{secmr}, we extend the above considerations and study real-valued, multivariate functions that induce signed measures in a general setting as follows.\\

For \(d\in \N\,,\) recall that \(\Xi=\bigtimes_{i=1}^d \Xi_i\) denotes the Cartesian product of the non-empty intervals \(\Xi_1,\ldots,\Xi_d\subseteq \R\,.\) For \(I\subset \{1,\ldots,d\}\,,\) \(I\ne \emptyset\,,\) denote by \(\Xi_I:=\bigtimes_{i\in I} \Xi_i\) the Cartesian product of \((\Xi_i)_{i\in I}\,.\) For \(i\in \{1,\ldots,d\}\,,\) we denote the infimum and supremum of \(\Xi_i\) by 
\begin{align*}
a_i&:=\inf \{\Xi_i\}, && b_i:=\sup\{\Xi_i\}\,,
\end{align*}
respectively, where both \(a_i\) and \(b_i\) may attain a finite or infinite value. Typically, we choose for \(\Xi_i\) intervals like \([0,1]\,,\) \([0,1)\,,\) \((0,1)\,,\) \(\R\,,\) or \(\R_+:=[0,\infty)\,.\) We set \(a:=(a_1,\ldots,a_d)\) and \(b:=(b_1,\ldots,b_d)\,.\)
For \(x=(x_1,\ldots,x_d),y=(y_1,\ldots,y_d)\in \R^d\,,\) write \(x\leq y\) whenever \(x_i\leq y_i\) for all \(i\in \{1,\ldots,d\}\,.\) Further, denote for \(x\leq y\) by \([x,y]:=\bigtimes_{i=1}^d [x_i,y_i]\) the closed cuboid spanned by \(x\) and \(y\) and, similarly, by \([x,y)\,,\) \((x,y]\,,\) and \((x,y)\) the respective componentwise half-open and open cuboid.


Denote by \(\cB(\Xi)\) the Borel \(\sigma\)-algebra on \(\Xi\,.\)
A signed measure on \((\Xi,\cB(\Xi))\) is a countably additive set function \(\nu\colon \cB(\Xi)\to \R\cup\{\pm\infty\}\) which attains at most one of the values \(+\infty\) and \(-\infty\) and fulfils \(\nu(\emptyset)=0\,.\) By the Jordan decomposition theorem, there exist two uniquely determined measures on \((\Xi,\cB(\Xi))\,,\) the \emph{positive part} \(\nu^+\) and the \emph{negative part} \(\nu^-\,,\) such that \(\nu=\nu^+-\nu^-\) where at least one of the measures \(\nu^+\) and \(\nu^-\) is finite, see, e.g., \cite[Chapter 5.1]{Benedetto-2009}. The tuple \((\nu^+,\nu^-)\) is called the \emph{Jordan decomposition} of \(\nu\,.\)
Similar to the one-dimensional case in \eqref{deffP}, the value of a closed cuboid with respect to a signed measure \(\nu\) can be approximated from the outside by using some standard arguments for measures. Using such an approximation, we will then define a measure-inducing function without any continuity assumptions. This requires some notation, which we introduce as follows.

Let \(x,y\in \Xi\,.\)
Whenever there exist \(\delta=(\delta_1,\ldots,\delta_d)\,,\) \(\varepsilon=(\varepsilon_1,\ldots,\varepsilon_d)\) with \(\delta_i,\varepsilon_i>0\) for all \(i\in \{1,\ldots,d\}\) such that \(x-\delta,y+\varepsilon\in \Xi\) and \(\nu([x-\delta,y+\varepsilon])\) is finite, then the iterated limit
\begin{align}\label{eqappme}
\begin{split}
\lim_{\delta_i,\varepsilon_i \downarrow 0\atop i\in \{1,\ldots,d\}} \nu\bigg(\bigtimes_{i=1}^d [x_i-\delta_i,y_i+\varepsilon_i]\bigg)
&:=\lim_{\delta_{i_1}\downarrow 0} \lim_{\varepsilon_{j_1}\downarrow 0} \cdots  \lim_{\delta_{i_d} \downarrow 0} \lim_{\varepsilon_{j_d}\downarrow 0} \nu\bigg(\bigtimes_{i=1}^d [x_i-\delta_i,y_i+\varepsilon_i]\bigg) \\
&\phantom{:}= \nu\bigg(\bigcap_{\delta_i,\varepsilon_i\downarrow 0\atop i=1,\ldots,d} \bigtimes_{i=1}^d [x_i-\delta_i,y_i+\varepsilon_i]\bigg)\\
&\phantom{:}= \nu\left([x_1,y_1]\times \cdots \times [x_d,y_d]\right)
\end{split}
\end{align}
exists for all arrangements \((i_1,\ldots,i_d),(j_1,\ldots,j_d)\in S_d\) of the limits due to the continuity of measures, see, e.g. \cite[Theorem 2.4.3]{Benedetto-2009}, where \(S_d\) denotes the set of all permutations of \(\{1,\ldots,d\}\,.\)
For a function \(f\colon \Xi\to \R\,,\) for \(z=(z_1,\ldots,z_d)\in \Xi\,,\) for \(i\in \{1,\ldots,d\}\,,\) and for \(\varepsilon \geq 0\) such that \(z_i+\varepsilon\in \Xi\,,\)
we define the difference operator of length \(\varepsilon\) applied to the \(i\)th component of \(f\) by
\begin{align}\label{defdopfh}
\Delta_{\varepsilon}^i f(z):= f(z+\varepsilon e_i)-f(z)\,,
\end{align}
 where \(e_i\) denotes the \(i\)th unit vector.
Similarly, for \(x=(x_1,\ldots,x_d),y=(y_1,\ldots,y_d)\in \Xi\) with \(x\leq y\,,\) the \(d\)-variate difference operator is defined by
\begin{align}\label{defdvdo}
\Delta_{x,y}[f]:= \Delta_{\varepsilon_1}^1\cdots \Delta_{\varepsilon_d}^d f(x)\,, ~~~ \varepsilon_i:=y_i-x_i\,, ~~~ i\in \{1,\ldots,d\}\,.
\end{align}
Due to \eqref{eqappme} and the disintegration theorem (cf. \cite[Theorem 6.4]{Kallenberg-2002}),
\begin{align*}
\nu([x,y]) = \lim_{\delta_i,\varepsilon_i \downarrow 0\atop i\in \{1,\ldots,d\}}
\int_{x_1-\delta_1}^{y_1+\varepsilon_1} \cdots \int_{x_d-\delta_d}^{y_d+\varepsilon_d} \nu_d(\de z_d|z_{d-1},\ldots,z_1) \cdots \nu_1(\de z_1)
\end{align*}
depends on a \(d\)-variate function \(F\) via the difference operator \(\Delta_{x-\delta,y+\varepsilon}[F]\) for \(\delta,\varepsilon\downarrow 0\,,\) whenever \(\nu([x-\delta,y+\varepsilon])\) is finite, where \(\nu_d(\cdot| z_{d-1},\ldots,z_1)\) denotes the conditional signed marginal measure \(\nu\) with respect to the \(d\)th component conditional on \(z_{d-1},\ldots,z_1\,,\) and where \(\nu_1\) denote the signed marginal measure of \(\nu\) with respect to the first component.
In the one-dimensional case, the above expression yields the correspondence in \eqref{deffP}; in the two-dimensional case, one has
\begin{align}
\nonumber \nu([x,y])&=  \lim_{\delta_i,\varepsilon_i\downarrow 0\atop i\in \{1,2\}} \Delta_{x-\delta,y+\varepsilon} [F]\\
\label{lim1}  &= \lim_{\delta_1\downarrow 0} \lim_{\delta_2\downarrow 0} \lim_{\varepsilon_1\downarrow 0} \lim_{\varepsilon_2\downarrow 0}\Big[ F(y_1+\varepsilon_1,y_2+\varepsilon_2) - F(y_1+\varepsilon_1,x_2-\delta_2)\\
\nonumber&  ~~~~~~~~~~~~~~~~~~~~~~~~~ - F(x_1-\delta_1,x_2+\varepsilon_2) + F(x_1-\delta_1,x_2-\delta_2)\Big] 
\end{align}
independent of the arrangements of the limits in \eqref{lim1}.
For a general definition of a measure-inducing function,  we extend the differences considered above to arbitrary dimension. To this end, denote
by \(u\vee v:=(\max\{u_1,v_1\},\ldots,\max\{u_d,v_d\})\) and \(u\wedge v:=(\min\{u_1,v_1\},\ldots,\min\{u_d,v_d\})\) the componentwise maximum and minimum, respectively, where \(u=(u_1,\ldots,u_d),v=(v_1,\ldots,v_d)\in \R^d\,.\)
Then define for a function \(g\colon \Xi\to \R\) and \(x,y\in \Xi\) with \(x\leq y\) the \emph{limit difference operator}
\begin{align}\label{defitlimi}
D_{x,y}[g]:= \lim_{\delta_i,\varepsilon_i \downarrow 0\atop i\in \{1,\ldots,d\}} \Delta_{a\vee (x-\delta),b\wedge (y+\varepsilon)} [g] = \lim_{\delta_{1}\downarrow 0} \lim_{\varepsilon_{1}\downarrow 0} \cdots  \lim_{\delta_{d} \downarrow 0} \lim_{\varepsilon_{d}\downarrow 0}  \Delta_{a\vee (x-\delta),b\wedge (y+\varepsilon)} [g] \,,
\end{align}
whenever the iterated limit on the right-hand side exists and coincides for all permutations of the limits. If \(x\) (and similarly for \(y\)) lies on the boundary of \(\Xi\) in the \(i\)-th coordinate, i.e., if \(a_i\in \Xi_i\) and \(x_i=a_i\,,\) then the limit \(\lim_{\delta_i\downarrow 0}\) on the right-hand side of \eqref{defitlimi} is superfluous because \(g\) is evaluated in the \(i\)-th coordinate at \(a_i\vee (x_i-\delta_i) = a_i=x_i\,.\)\\


Since a (signed) measure is uniquely determined on the set of closed cuboids, the above considerations motivate to generally define a measure-inducing function by the limit difference operator in \eqref{defitlimi} as follows.


\begin{definition}[Measure-inducing function]\label{defmifun}~\\
A function \(f\colon \Xi \to \R\) is said to be \emph{measure-inducing} if there exists a signed measure \(\nu\) such that
\begin{align}\label{defmind}
\nu([x,y]) = D_{x,y} [f] 
\end{align}
for all \(x,y \in \Xi\) with \(x\leq y\,.\) 
Denote by \(\nu_f\) the signed measure induced by \(f\,.\)
\end{definition}

By definition of a measure-inducing function, the componentwise left- and right-hand limits do not necessarily exist, see Example \ref{exmindgb}\eqref{exmindgba}.
Further, the mass \(\nu_f(\{x\})\) at \(x\) does not depend on \(f(x)\,.\)
To avoid technical difficulties, we often consider a subclass of measure-inducing functions on \(\Xi\), making the following assumptions, which are typically fulfilled in applications. We denote by \(\overset{\circ}{\Xi}\) the set of inner points of \(\Xi\,.\)
\begin{assumption}[Componentwise limits; continuity at the boundary of \(\Xi\)]\label{asscont}~
\begin{enumerate}[(i)]
\item \label{asscont1} For \(f\colon \Xi \to \R\,,\) the componentwise left- and right-hand limits exist. Further, for all \(x\in \overset{\circ}{\Xi}\,,\) there exist \((i_1,\ldots,i_d)\in S_d\) and \(k_j\in \{-1,1\}\) such that 
\begin{align}\label{contcond0}
f(x)=\lim_{\varepsilon_1\downarrow 0} \cdots \lim_{\varepsilon_d\downarrow 0} f\big(x+\sum_{j=1}^d k_j\varepsilon_j e_{i_j}\big)\,.
\end{align}
\item \label{asscont2} \(f\colon \Xi \to \R\) is continuous at the boundary of \(\Xi\,,\) i.e., whenever \(a_i=\inf\{\Xi_i\}\in \Xi_i\) and \(b_i=\sup\{\Xi_i\}\in \Xi_i\,,\) respectively, for \(i\in \{1,\ldots,d\}\,,\) the function \(f\) satisfies
\begin{align}\label{contboun1}
\lim_{h\downarrow 0} f(x_1,\ldots,x_{i-1},a_i+h,x_{i+1},\ldots,x_d) &= f(x_1,\ldots,x_{i-1},a_i,x_{i+1},\ldots,x_d), ~~ \forall x_j\in \Xi_j\,, j\ne i\,, \\
\label{contboun2}\lim_{h\downarrow 0} f(x_1,\ldots,x_{i-1},b_i-h,x_{i+1},\ldots,x_d) &= f(x_1,\ldots,x_{i-1},b_i,x_{i+1},\ldots,x_d), ~~ \forall x_j\in \Xi_j\,, j\ne i.
\end{align}
\end{enumerate}
\end{assumption}
Assumption \ref{asscont}\eqref{asscont1} states that the value \(f(x)\) is given by one of the componentwise limits of \(f\) at \(x\,,\) which all are assumed to exist. Assumption \ref{asscont}\eqref{asscont2} is continuity of \(f\) at the boundary of \(\Xi\,.\)
Denote by
\begin{align}
\nonumber \cF_{mi}(\Xi)&:=\left\{ f\colon \Xi \to \R \mid f\text{ is measure-inducing and satifies Assumption \ref{asscont}\eqref{asscont1}}\right\}\,,\\
\label{defclassmi}\cF_{mi}^c(\Xi)&:=\left\{ f\colon \Xi \to \R \mid f\text{ is measure-inducing and satifies Assumptions \ref{asscont}\eqref{asscont1} and \eqref{asscont2}}\right\}
\end{align}
the class of measure-inducing functions on \(\Xi\) satisfying Assumption \ref{asscont}\eqref{asscont1} (and \eqref{asscont2}).
Note that \(\cF_{mi}(\Xi)=\cF_{mi}^c(\Xi)\) if \(\Xi=(a,b)\) because, in this case, Assumption \ref{asscont}\eqref{asscont2} is trivially satisfied.


\begin{remark}\label{remindmeas}
\begin{enumerate}[(a)]
\item \label{remindmeas1} If the domain \(\Xi\) is a compact subset of \(\R^d\,,\) i.e., if \(\Xi=[a,b]\subset \R^d\,,\) then the induced signed measure \(\nu_f\)
is finite. This follows from
\begin{align}\label{eqremindmeas1}
\nu_f^+([a,b])-\nu_f^-([a,b])=\nu_f([a,b])=D_{a,b}[f] = \Delta_{a,b}[f]\in \R
\end{align}
using that, due to the Jordan decomposition of a signed measure, at least one of the measures \(\nu_f^+\) and \(\nu_f^-\) is finite. This yields, in particular, that a measure-inducing function always induces a \(\sigma\)-finite measure. Conversely, due to Definition \ref{defmifun}, \(f\) can only induce a non-finite signed measure by \eqref{defmind} if the domain \(\Xi\) is a non-compact subset of \(\R\,.\)
\item \label{eqremindmeas2a} If a measure-inducing function \(f\colon \Xi\to \R\) is bounded, then \(\nu_f\) is finite. This follows for \(\Xi=[a,b]\) from \eqref{eqremindmeas1}. For \(\Xi=(a,b)\,,\) it holds that
\begin{align*}
\nu_f(\Xi) = \lim_{x_i\downarrow a_i, y_i\uparrow b_i\atop i=1,\ldots,d} \nu_f([x,y]) = \lim_{x_i\downarrow a_i, y_i\uparrow b_i\atop i=1,\ldots,d}  D_{x,y}[f] \leq 2^d M\,,
\end{align*}
where \(x=(x_1,\ldots,x_d),y=(y_1,\ldots,y_d)\in (a,b)\) and where \(M\in \R_+\) is an upper bound for \(|f|\,.\) The other cases where \(\Xi\) is a product of closed, half-open, and open intervals follow similarly.
\item \label{eqremindmeas2} From the definition of the operator \(\Delta_{x,y}[f]\,,\) it follows that \(\Delta_{x,y}[f]=0\) whenever \(x_i=y_i\) 
for a component \(i\in \{1,\ldots,d\}\,.\) Further, if \(f\) is measure-inducing and constant in at least one component, then \(f\) induces the null measure, i.e., \(\nu_f(A)=0\) for all Borel sets \(A\subseteq \Xi\,.\)
\item \label{eqremindmeas3} The continuity assumptions \eqref{contcond0} - \eqref{contboun2} are mild. They allow \(\nu_f\) having point mass at inner points of \(\Xi\) and only exclude the case that \(\nu_f\) has point mass at the boundary of \(\Xi\,.\) To see the latter, assume w.l.o.g. that \(a_1\in \Xi_1\,.\) Then, for \(x=(a_1,w)\in \Xi\,,\) it holds by \eqref{contboun1} that
\begin{align*}
\nu_f(\{x\})= D_{x,x}[f] &= \lim_{\varepsilon\downarrow 0} D_{w,w} [f(a_1+\varepsilon,\cdot)] - D_{w,w}[f(a_1,\cdot)] \\&= D_{w,w}[\lim_{\varepsilon \downarrow 0 } [f(a_1+\varepsilon,\cdot) - f(a_1,\cdot)] =  0\,.
\end{align*}
 However, extending \(f\) to a larger domain \(\Xi'\supset \Xi\,,\) it is possible to consider the case where the induced signed measure of \(f|_{\Xi}\) has finite point mass also at the boundary of \(\Xi\,.\)
\end{enumerate}
\end{remark}


\subsection{Properties of measure-inducing functions}

In the sequel, we outline various properties of measure-inducing functions, which are used for the integration by parts formula in Section \ref{secmr}. A detailed study of measure-inducing functions concerning left- and right-continuous versions, transformations, marginal functions, the Hardy-Krause variation, the Jordan decomposition, and survival functions is given in Appendix A.

\subsubsection{Left-/right-continuity}

If \(f\in \cF_{mi}(\Xi)\) is left-/right-continuous, then the induced signed measure \(\nu_f\) can be expressed by the difference operator \(\Delta_{x,y}[f]\) in \eqref{defdvdo} through
\begin{align*}
\nu_f\left([x,y)\right) &= \Delta_{x,y}[f] && \text{and} &&  \nu_f\left((x,y]\right) = \Delta_{x,y}[f],
\end{align*}
respectively, see Proposition \ref{lemfmi}. If \(f\) additionally satisfies the continuity conditions \eqref{contboun1} and \eqref{contboun2} at the boundary of \(\Xi\,,\) then the left- and right-continuous versions \(f_-\) and \(f_+\) of \(f\) induce the same signed measure, i.e.,
\begin{align*}
\nu_f = \nu_{f_-} = \nu_{f_+}\,,
\end{align*} 
see Lemma \ref{lemunimifun} and Example \ref{exlemunimifun}.

\subsubsection{Marginal function and marginal measure}

Formula \eqref{genformi1} uses the concept of marginal functions.
For \(I=\{i_1,\ldots,i_k\}\subseteq \{1,\ldots,d\}\,,\) \(I\ne \emptyset\,,\) the \emph{lower \(I\)-marginal} \(f_I\) of a function \(f\colon \Xi\to \R\) is defined as the function 
\begin{align}\label{defqmargl}
f_I(x_{i_1},\ldots,x_{i_k}) := \lim_{x_j\downarrow a_j\atop j\notin I} f(x_1,\ldots,x_d)\,, ~~~ x_{i_j}\in \Xi_{i_j}\,, ~j\in \{1,\ldots,k\}\,,
\end{align}
 whenever the iterated limit exists independent of the arrangement of the limits. The \emph{upper \(I\)-marginal function} \(f^I\) as well as the constants \(f_\emptyset\) and \(f^\emptyset\) are defined analogously in \eqref{defqmarg} and \eqref{eqfmess}.
For \(I=\{1,\ldots,d\}\,,\) one has \(f_I=f=f^I\,.\)

Since \(I\)-marginal functions of a measure-inducing function are generally not measure-inducing, see Example \ref{exmindgb}, we consider in Theorem \ref{proppif} the class
\begin{align}
\label{defclassmif}
\cF_{mi}^{c,l}(\Xi)&:=\left\{ f\in \cF_{mi}^c\mid f_I\in \cF_{mi}^c(\Xi_I) \text{ for all } I\subset \{1,\ldots,d\}\,, I\ne \emptyset\right\}\,,
\end{align}
of measure-inducing functions whose lower \(I\)-marginal functions are also measure-inducing and continuous at the boundary of \(\Xi_I\,.\)
As shown in Theorem \ref{propbhkv}, functions in the class \(\cF_{mi}^{c,l}([0,1]^d)\) are characterized by bounded Hardy-Krause variation, which implies for compact domain \(\Xi\) and continuous \(f\) or \(g\) that the integration by parts formula \eqref{genformi1} is closely related to the standard integration by parts formulas for Stieltjes integrals, see, e.g., \cite[Proposition 2]{Zaremba-1968}.
However, for a non-compact domain \(\Xi\,,\) functions in the class \(\cF_{mi}^{c,l}(\Xi)\) are not necessarily bounded and thus they may have infinite Hardy-Krause variation.
A characterization of the class \(\cF_{mi}^{c,l}(\R_+^d)\) through \(\Delta\)-monotone functions is given in Theorem \ref{thecfmicl}. 
Using the transformation \eqref{eqpropmeatraf} for induced signed measures, characterizations for a general domain \(\Xi\) can be obtained similarly.

The \(I\)-marginal measure \(\nu^I\) of a signed measure \(\nu\) is defined as usual by \eqref{defImargmeas}. Note that, in general, the signed measure \(\nu_{f_I}\) induced by the lower \(I\)-marginal of \(f\in \cF_{mi}^{c,l}(\Xi)\), as used in formula \eqref{genformi1}, does not coincide with the \(I\)-marginal measure induced by \(f\,,\) i.e., \(\nu_{f_I}\ne \nu_f^I\) in general, see Example \ref{exmmamibl}\eqref{exmmamibl1}.

\subsubsection{Grounded functions}

In probabilistic applications, an important property of aggregation functions is that they vanish whenever a component tends to the lower boundary of the domain. This property is often denoted as \emph{grounded}, which we define for a function \(f\colon \Xi\to \R\) that fulfils the continuity condition \eqref{contboun1} by 
\begin{align}\label{eqdefgrou}
\lim_{x_j\downarrow a_j} f(x_1,\ldots,x_d)=0 ~~~\text{for all } j\in \{1,\ldots,d\} \text{ and } x_i\in \Xi_i\,, i\in \{1,\ldots,d\}\setminus \{j\}\,,
\end{align}
cf. \cite{Nelsen-2006}.
Note that condition \eqref{eqdefgrou} is equivalent to \(f_I(x)=0\) for all \(x\in \Xi_I\) and \(I\subsetneq \{1,\ldots,d\}\,,\) i.e., all lower \(I\)-marginals of a grounded function \(f\) induce the null measure.
Some properties of grounded functions that are used in the proof of Theorem \ref{proppif} are given in Lemmas \ref{lemmargdisgrou} and \ref{lemsurvmarg}.

We make use of the following classes of aggregation functions on \([0,1]^d\,,\) which are, in particular, bounded and grounded functions and thus candidates for \(g\) in the integration by parts formulas \eqref{genformi1} and \eqref{genformi18}.

\begin{definition}[Semi-copula, quasi-copula, copula]\label{defsemcop}~
\begin{itemize}
\item[(a)] A (\(d\)-variate) \emph{semi-copula} is a function \(S\colon [0,1]^d\to [0,1]\) that
\begin{enumerate}[(i)]
\item \label{defsemcop1} has uniform marginals, i.e., for all \(i \in \{1,\ldots,d\}\,,\) it holds \(S(u)=u_i\) for all \(u=(u_1,\ldots,u_d)\in [0,1]^d\) whenever \(u_j=1\) for all \(j\ne i\,.\)
\item \label{defsemcop2} is componentwise increasing, i.e., \(S(u)\leq S(v)\) whenever \(u\leq v\,.\)
\end{enumerate}
\item[(b)] A (\(d\)-variate) \emph{\(L\)-Lipschitz semi-copula} is a (\(d\)-variate) semi-copula that satisfies for some \(L\geq 1\) the Lipschitz condition
\begin{enumerate}[(i)]\setcounter{enumi}{2}
\item \(|S(u)-S(v)|\leq L\, \sum_{i=1}^d |u_i-v_i|\) for all \(u=(u_1,\ldots,u_d),v=(v_1,\ldots,v_d)\in [0,1]^d\,.\)
\end{enumerate}
\item[(c)] A (\(d\)-variate) \emph{quasi-copula} is an \(1\)-Lipschitz semi-copula.
\item[(d)] A (\(d\)-variate) \emph{copula} is a function \(C\colon [0,1]^d\) that is grounded, \(d\)-increasing and fulfils \eqref{defsemcop1}.
\end{itemize}
Denote by \(\cS_d\,,\) \(\cS_{d,L}\,,\) \(\cQ_d\,,\) and \(\cC_d\)  the class of \(d\)-variate semi-copulas, \(L\)-Lipschitz semi-copulas, quasi-copulas, and copulas, respectively.
\end{definition}

Note that property \eqref{defsemcop1} and \eqref{defsemcop2} imply groundedness.
It holds that \(\cC_d \subset \cQ_d\subset \cS_{d,L}\subset \cS_d\,,\) where every inclusion is strict apart from the trivial cases \(d=1\) and \(L = 1\,.\) While copulas describe due to Sklar's theorem the dependence structure of probability distributions, semi-copulas relate to the dependence structure of capacities. Further, quasi-copulas have the property that they form a complete lattice, which however is only in the bivariate case order-isomorphic to the Dedekind–MacNeille completion of the poset of copulas, see, e.g., \cite{Durante-2006,Durante-2016,Nelsen-2006} for an overview of these concepts.

\subsubsection{Survival function}

The representation of the integration by parts formula \eqref{genformi1} uses the concept of survival functions: For a function \(f\colon \Xi \to \R\) that fulfils the continuity condition \eqref{contboun2}, the survival function \(\overline{f}\colon \Xi\to \R\) of \(f\)
is defined by
\begin{align}\label{defsurvfun}
\overline{f}(x):=\sum_{k=0}^d\sum_{I\subseteq \{1,\ldots,d\}\atop I=\{i_1,\ldots,i_k\}} (-1)^{|I|} f^I(x_{i_1},\ldots,x_{i_k})\,, ~~~x=(x_1,\ldots,x_k)\in \Xi\,,
\end{align}
whenever all upper \(I\)-marginals exist.
For example, if \(F\) is the cumulative distribution function of a random vector \(X=(X_1,\ldots,X_d)\) on a probability space \((\Omega,\mathcal{A},P)\,,\) then the survival function \(\overline{F}\) of \(F\) is given by
\begin{align}\label{survdisfun}
 \overline{F}(x) = \int \1_{\{z>x\}} \de F(z) = P(X_i> x_i ~\forall i\in \{1,\ldots,d\}) \,.
\end{align}
In general, taking the \(I\)-marginal and taking the survival function do not commute, i.e., \(\overline{f^I} \ne \overline{f}^I\) and \(\overline{f_I}\ne \overline{f}_I\,;\)
see Example \ref{exuimsf}. However, if \(f\) is grounded, then \(\overline{f^I}= \overline{f}_I\,;\) see Lemma \ref{lemsurvmarg}. This property is used in the proof of Theorem \ref{proppif}.

\section{Integration by parts for measure-inducing functions}\label{secmr}

This section presents the main results of integration by parts.
After introducing the expectation operator and the integration by parts operator, we formulate Theorem \ref{proppif} in this short notation and provide some direct implications.
In the second part of this section, we give a transformation formula for the integration by parts operator, which corresponds to the change of variables for Lebesgue integrals.

Let \(f\colon \Xi\to \R\) be a measurable function and let \(g\colon \Xi\to \R\) be measure-inducing. If \(f\) is \(\nu_g\)-integrable, then define the integral operator of \(f\) w.r.t. \(\nu_g\) as the Lebesgue integral given by
\begin{align*}
\psi_g(f):=\int_\Xi f(x)\de \nu_g\,.
\end{align*}
Further, if \(g_I\) exists and is measure-inducing for all \(I\subseteq\{1,\ldots,d\}\,,\) then define the integration by parts operator of \(f\) w.r.t. \(g\) by
\begin{align}\label{defdpio}
\pi_g(f) = \sum_{I\subseteq \{1,\ldots,d\}\atop I\ne \emptyset} 
\int_{\Xi_I} f_I(u)\de \nu_{g_I}(u)
+ f_\emptyset g_\emptyset\,,
\end{align}
whenever all integrals exist. In contrast to \cite{Lux-2017,Tankov-2011}, here the notation for both \(\psi_g(f)\) and \(\pi_g(f)\) is chosen in the way that the measure-inducing function \(g\) appears as subscript and the function \(f\), which refers to the integrands, appears in the round brackets.


In each integral on the right-hand side of \eqref{defdpio} one integrates against the signed measure \(\nu_{g_I}\) induced by the lower \(I\)-marginal of \(g\,.\) Note that, in general, \(\nu_{g_I}\) does not coincide with the \(I\)-marginal measure \(\nu_g^I\) of the signed measure \(\nu_g\,,\)  see Example \ref{exmmamibl}\eqref{exmmamibl1}. Further,  the lower \(I\)-marginal \(g_I\) of a measure-inducing function \(g\) is not necessarily measure-inducing, see Example \ref{exmindgb}. Among others, these issues concern the integration by parts formula in \cite{Lux-2017}.

\subsection{Integration by parts for measure-inducing functions}

In this subsection, we derive conditions on functions \(f,g\colon \Xi\to \R\) such that the integration by parts formula \(\psi_h(f)=\pi_f(\overline{g})\) holds true. This requires the existence of \(f_\emptyset\) and \(g_\emptyset\) in \eqref{defdpio}.

Assuming that the functions \(f\) and \(g\) admit componentwise limits, we say that they have \emph{no common discontinuities on the same side} of each point, if for all inner points \(x\in \Xi\) and for all \(i\in \{1,\ldots,d\}\) the implications 
\begin{align}\label{defcomdiscy}
\begin{split}
\lim_{\varepsilon \downarrow 0} f(x-\varepsilon e_i) \ne f(x)  ~~~ &\Longrightarrow ~~~ \lim_{\varepsilon\downarrow 0} g(x-\varepsilon e_i) = g(x) \\\text{and} ~~~ \lim_{\varepsilon \downarrow 0} f(x+\varepsilon e_i) \ne f(x) ~~~ & \Longrightarrow ~~~ \lim_{\varepsilon\downarrow 0} g(x+\varepsilon e_i) = g(x)
\end{split}
\end{align}
are satisfied, i.e., for any \(i\,,\) if \(f\) has a jump discontinuity on the left/right of \(x\) in the \(i\)th component, then \(g\) is left-/right-continuous at \(x\) in the \(i\)th component. Conversely, whenever \(g\) has a jump discontinuity on the left/right of \(x\) in the \(i\)th component, then \(f\) is left-/right-continuous at \(x\) in the \(i\)th component.
Obviously, a left-continuous and a right-continuous measure-inducing function have no common discontinuity at the same side of each point.

Using the notation introduced above, Theorem \ref{proppif} states for \(f \in \cF_{mi}^{c,l}(\Xi)\) and \(g\in \cF_{mi}^c(\Xi)\,,\) where \(f_\emptyset\) exists, \(g\) is bounded and grounded, and \(f\) and \(g\) have no common discontinuities at the same side of each point, that
\begin{align}\label{genformi}
\psi_g(f) = \pi_f({\overline{g}})\,,
\end{align}
whenever all integrals exist.
%
%
As a direct consequence, we obtain the following result under some continuity assumptions.

\begin{corollary}[Integration by parts for (left-/right-)continuous functions]\label{corpif}~\\
Let \(f \in \cF_{mi}^{c,l}(\Xi)\) and \(g\in \cF_{mi}^c(\Xi)\,.\) Assume that \(f_\emptyset\) exists and that \(g\) is bounded and grounded.
If either
\begin{enumerate}[(i)]
\item \label{corpif1} \(f\) is left-continuous and \(g\) is right-continuous, or
\item \label{corpif2} \(f\) is right-continuous and \(g\) is left-continuous, or
\item \label{corpif3} at least one of the functions \(f\) and \(g\) is continuous,
\end{enumerate}
then
\begin{align}\label{eqcorpif3}
\psi_g(f) = \pi_f({\overline{g}})\,,
\end{align}
whenever all integrals exist.
\end{corollary}

\begin{remark}
\begin{enumerate}[(a)]
\item By Lemma \ref{lemmargdisgrou}\eqref{lemmargdisgrou1}, any grounded and bounded function \(g\in \cF_{mi}^c(\Xi)\) is in \(g\in \cF_{mi}^{c,l}(\Xi)\,.\) Since for \(f,g\in\cF_{mi}^{c,l}(\Xi)\) a uniquely determined componentwise left-continuous or right-continuous version exists, we obtain from Theorem \ref{proppif} that \(\psi_g(f_-) = \pi_f(\overline{g_+})\) and \(\psi_g(f_+) = \pi_f(\overline{g_-})\,.\)
\item By Sklar's Theorem, every \(d\)-dimensional distribution function \(F\) can be decomposed into a composition of its univariate marginal distribution functions \(F_1,\ldots,F_d\) and a \(d\)-variate copula \(C\,,\) i.e., \(F=C\circ (F_1,\ldots,F_d)\,,\) see \cite[Theorem 2.10.9]{Nelsen-2006}. As a special case of Theorem \ref{proppif}, we obtain for the underlying space \(\Xi=\R_+^d\) that
\begin{align}\label{luxst}
\psi_F(f)=\pi_f(\overline{F})
\end{align}
whenever \(f\in \cF_{mi}^{c,l}(\R_+^d)\) is left-continuous and \(F_i(0)=0\) for all \(i\in \{1,\ldots,d\}\,.\) Formula \eqref{luxst} is given in \cite[Proposition 5.3]{Lux-2017}, see also \cite[Theorem 3.7]{Yanez-2020}, where, however, the constant \(f_\emptyset g_\emptyset\) in \eqref{defdpio} is missing and several necessary assumptions on \(f\) and \(F_i\) are not posed. 
For example, if \(f\) in \eqref{luxst} is assumed to be right-continuous, then jump discontinuities have to be considered as Example \ref{exajumdis} shows.
Note that \(F_I=C_I\circ (F_1,\ldots,F_d)\) for all \(I\in \{1,\ldots,d\}\,,\) \(I\ne \emptyset\,,\) since we assume \(F_i(0)=0\,.\)
\end{enumerate}
\end{remark}

The following example illustrates that formula \eqref{genformi} cannot be applied if \(f\) and \(g\) have a common discontinuity at the same side of a point. For \(d=1\,,\) a general integration by parts formula that involves corrective terms due to common discontinuities at the same side of some points is given in \cite[Theorem II.19.3.13]{Hildebrandt-1963} for the so-called Young integral.

\begin{example}[Common discontinuity at the same side of a point]\label{exajumdis}~\\
Consider the functions \(f,g\colon \R_+\to \R\) defined by \(f(x)=g(x)=\1_{\{x\geq 1\}}\,.\) Then \(f\) and \(g\) induce the one-point probability measure \(\delta_{x}\) in \(x=1\) with mass \(1\,,\) i.e., \(\nu_f=\nu_g=\delta_{1}\,.\) Since both \(f\) and \(g\) are right-continuous and have a common discontinuity on the same side at \(x=1\,,\) Theorem \ref{proppif} does not apply and formula \eqref{genformi} is not valid. Indeed, we have
\begin{align*}
\psi_g(f)&= \int_{\R_+} f(x)\de \nu_g(x) = f(1) \cdot \nu_g(\{1\}) = 1\cdot 1 = 1 \,, ~~~ \text{but}\\
\pi_f(\overline g)&= \int_{\R_+} \overline{g}(x) \de \nu_f(x) + \overline{g}_\emptyset f_\emptyset = \int_{\R_+} \1_{\{x<1\}} \de \nu_f(x) = 0 \cdot \nu_f(\{1\}) = 0\cdot 1 =0\,,
\end{align*}
using that \(\overline{g}(x) = g(\infty)-g(x)=1-\1_{\{x\geq 1\}}= \1_{\{x<1\}}\,,\) \(\overline{g}_\emptyset=g(\infty)-g(0)=1\,,\) and \(f_\emptyset = f(0)=0\,.\)
\end{example}

If \(f\) is grounded, the integration by parts operator \(\pi_f(g)\) reduces for every \(\nu_f\)-integrable function \(g\) to
\begin{align}\label{dogro}
\pi_f(g) =\int_{\Xi} g(x)\de \nu_f(x)
\end{align}
because \(f_I(x)=0\) for all \(I\subsetneq \{1,\ldots,d\}\) and \(x\in \Xi\,.\) Hence, if both functions \(f\) and \(g\) are grounded, the integration by parts formula \eqref{genformi} simplifies as follows.

\begin{corollary}[If both functions are grounded]\label{corgrf}~\\
Let \(f,g\in \cF^c_{mi}(\Xi)\) be grounded. Assume that \(h\) is bounded. If \(f\) and \(g\) have no common discontinuities at the same side of each point, then
\begin{align*}
\int_{\Xi} f(x)\de \nu_g(x) = \int_{\Xi} \overline{g}(x)\de \nu_f(x)\,.
\end{align*}
\end{corollary}
A similar formula, where \(f\) and \(g\) are copulas, is given in \cite[Lemma 4.1]{Fuchs-2016}.


\subsection{Transformation formula}

In this subsection, we establish a transformation formula for the integration by parts operator, which corresponds to the change of variables for Lebesgue integrals.

We denote a function \(F\colon \Xi_i\to [0,1]\) that is non-decreasing, right-continuous and satisfies \(\lim_{x\downarrow a_i}F(x)=0\) and \(\lim_{x\uparrow b_i} F(x)=1\) as \emph{univariate distribution function on \(\Xi_i\)}. Note that we do not allow any probability mass at \(a_i=\inf(\Xi_i)\) and \(b_i=\sup(\Xi_i)\,.\) Its \emph{generalized inverse} \(F^{[-1]}\colon [0,1]\to \Xi_i\cup \{a_i,b_i\}\) is defined by
\begin{align}\label{defgeninvy}
F^{[-1]}(t):=\inf\{x\in \Xi_i \colon F(x)\geq t\}
\end{align}
with the convention that \(\inf \emptyset = b_i\,.\)
Note that \(F^{[-1]}\) is left-continuous.
 For every measure-inducing function \(g\colon [0,1]^d\to \R\,,\) for every distribution function \(F_i\colon \Xi_i\to \R\,,\) \(1\leq i \leq d\,,\) and for every measurable function \(f\colon \Xi\to \R\,,\) a well-known identity for the expectation operator states that 
\begin{align*}
\psi_{g\circ(F_1,\ldots,F_d)}(f) = \psi_g\left(f\circ (F_1^{[-1]},\ldots,F_d^{[-1]})\right)
\end{align*}
whenever the integrals exist, compare \cite[Theorem 2]{Winter-1997}.
A similar result holds true for the integration by parts operator as follows.

\begin{proposition}[Transformation formula]\label{lemmargtraf}~\\
Let \(f\in \cF_{mi}^{c,l}(\Xi)\) and, for all \(i\in \{1,\ldots,d\}\,,\)  let \(F_i\) be a univariate distribution function on \(\Xi_i\,.\) Then, for every measurable function \(g\colon [0,1]^d\to \R\,,\) it holds that
\begin{align}\label{eqlemmargtraf}
\pi_{f\circ (F_1^{[-1]},\ldots,F_d^{[-1]})}(g) = \pi_f(g\circ(F_1,\ldots,F_d))
\end{align}
provided all integrals exist.
\end{proposition}

Note that the transformation formula \eqref{eqlemmargtraf} for the integration by parts operator relates to the transformation of signed measures in \eqref{eqpropmeatraf} as follows:
If for all \(i\in I=\{i_1,\ldots,i_k\}\subseteq \{1,\ldots,d\}\,,\) \(I\ne \emptyset\,,\) \(F_i\) is continuous and strictly increasing, then \(F_i^{[-1]} = F_i^{-1}\) for all \(i\in I\) and the signed measure induced by \(f_I\circ (F_{i_1}^{[-1]},\ldots,F_{i_k}^{[-1]})\) is the image of \(\nu_{f_I}\) under \((F_{i_1},\ldots,F_{i_k})\,.\)

\section{Convergence results and extension of the Lebesgue integral to semi-copulas}\label{secvonv}

%
In this section, convergence results are derived for the integrals \(\int f \de \mu_n\) for classes of measure-inducing functions \(f\,,\) where \(\mu_n = \nu_{g_n}\) is the signed measure induced by a grounded and measure-inducing function \(g_n\,,\) which converges pointwise to a function \(g\,.\) 
In general, the 'limit measure' is no longer a signed measure, since it may exhibit infinite positive and negative mass on compact sets. This allows an extension of the Lebesgue integral to the case where one integrates with respect to a quasi-copula, which, in general, does not induce a signed measure.
We restrict us to the domains \(\Xi=\R_+^d\) or \(\Xi=[0,1]^d\) for which, under the continuity assumption \eqref{contboun1}, \(f_\emptyset\) and \(g_\emptyset\) defined by \eqref{eqfmess} exist. Convergence results for general domain \(\Xi\) can be obtained analogously, using the transformation invariance of signed measures in Proposition \ref{propmeatraf} and the transformation invariance of the integration by parts operator in Proposition \ref{lemmargtraf}.

%

\subsection{Compact domain}

In this subsection, we consider the case where the domain \(\Xi\) is compact and, hence, every measure-inducing function on \(\Xi\) induces a finite signed measure by \eqref{defmind}, see Remark \ref{remindmeas}\eqref{remindmeas1}.
as discussed above, we choose without loss of generality \(\Xi=[0,1]^d\,.\)

We first show convergence of the integrals in the integration by parts formula \eqref{genformi1} under the standard assumptions that one of the functions is continuous and the other is measure-inducing. 
Therefore, let \(f\colon [0,1]\to \R\) be continuous and \(g\in \cF_{mi}^c([0,1]^d)\) be grounded. Then \(f\) is bounded and \(g\) induces a finite signed measure (see Remark \ref{remindmeas}\eqref{remindmeas1}) and thus \(\psi_g(f)\) exists. Whenever \(f_n\in \cF_{mi}^{c,l}(\Xi)\,,\) \(n\in \N\,,\) is continuous and converges pointwise to \(f\,,\) one has
\begin{align}\label{conhhh1}
\pi_{f_n}(\overline{g}) = \psi_g(f_n) \to \psi_g(f)
\end{align}
due to Corollary \ref{corpif}\,\eqref{corpif3} and the dominated convergence theorem, i.e., the left-hand side and therefore also the right-hand side of \eqref{genformi1} converges. Similarly, for \(g\) being grounded and continuous, let \(g_n\in \cF_{mi}^c([0,1]^d)\,,\) \(n\in \N\,,\) be a sequence of grounded and continuous functions such that \(g_n\to g\) pointwise. For \(f\in \cF_{mi}^{c,l}([0,1]^d)\,,\) it follows that
\begin{align}\label{conhhh2}
\psi_{g_n}(f) = \pi_f(\overline{g_n}) \to \pi_f(\overline{g})\,,
\end{align}
i.e., the right-hand side and thus also the left-hand side in \eqref{genformi1} converges.
Since a function is in \(\cF_{mi}^{c,l}([0,1]^d)\) if and only if it has bounded Hardy-Krause variation (see Theorem \ref{propbhkv}), we obtain from \eqref{conhhh1} and \eqref{conhhh2} the following result which is a version of integration by parts in \cite[Proposition 2]{Zaremba-1968} for measure-inducing functions. Note that a grounded function is in the class \(\cF_{mi}^{c,l}([0,1]^d)\,,\) whenever it is in \(\cF_{mi}^{c}([0,1]^d)\,,\) see Lemma \ref{lemmargdisgrou}\eqref{lemmargdisgrou1}.

\begin{proposition}[If one of the functions is continuous]\label{propconvfin}~\\
For \(f,g\colon [0,1]^d\to \R\,,\) assume that \(g\) is grounded. If one of the functions is continuous and the other is in \(\cF_{mi}^{c,l}([0,1]^d)\,,\) then the integrals in \eqref{genformi1} converge and
\begin{align}
\psi_g(f)= \pi_f(\overline{g})\,.
\end{align}
\end{proposition}

By the above result, the Lebesgue integral of a measure-inducing function \(f\in \cF_{mi}^{c,l}([0,1]^d)\) can be extended to the case where one integrates with respect to a grounded and continuous function \(g\) which is not assumed to induce a signed measure.

The integration by parts formula \eqref{genformi1} is also valid when both functions exhibit jump discontinuities that do not occur at the same side of each point.
The following theorem gives sufficient conditions on the convergence of the integrals \((\psi_{g_n}(f))_{n\in \N}\) if one of the functions is left-continuous and the other is right-continuous. 


\begin{theorem}[Left-/right-continuous case]\label{theconvfin}~\\
For all \(n\in \N\,,\) let \(g_n\in \cF_{mi}^c([0,1]^d)\) be right-continuous (left-continuous) and grounded. Let \(f\in \cF_{mi}^{c,l}([0,1]^d)\) be left-continuous (right-continuous). Assume that there exists \(M\in \N\) and a function \(g\colon [0,1]^d\to \R\) such that
\begin{enumerate}[(i)]
\item \label{theconvfin1}\(|g_n(x)|\leq M\) for all \(x\in [0,1]^d\) and \(n\in \N\,,\) and
\item \label{theconvfin2} \(g_n(x)\to g(x)\) for all \(x\in [0,1]^d\,.\)
\end{enumerate}
Then, \(\pi_f(\overline{g})\) exists and
\begin{align}\label{genformi18}
\psi_{g_n}(f) \xrightarrow{n\to \infty} \pi_f(\overline{g})\,.
\end{align}
\end{theorem}


\begin{remark}
\begin{enumerate}[(a)]
\item If the function \(g\) in Theorem \ref{theconvfin} is in \(\cF_{mi}^c([0,1]^d)\) and left-continuous/right-continuous, then \(\psi_{g_n}(f) \to \psi_g(f)\) due to \eqref{genformi18} and \eqref{eqcorpif3}, which is a variant of weak convergence of measures.
\item 
Theorem \ref{theconvfin} allow an extension of the Lebesgue integral of a left-continuous function \(f\in \cF_{mi}^{c,l}([0,1]^d)\) to the case where one integrates w.r.t. a continuous semi-copula \(S\,.\) For example, consider the approximation \((S_n)_{n\in \N}\) of \(S\) given by
\begin{align}
\label{semappx} S_n&=S\circ (F_n,\ldots,F_n)\,,\\
\label{semappx2} F_n(x)&= \begin{cases}
0 & \text{if } x<\tfrac 1 n\,,\\
\tfrac k n &\text{if } x \in \left[\tfrac k {n+1}, \tfrac {k+1}{n+1}\right)\,, k=1,\ldots,n\,,\\
1& \text{if } x\geq 1\,.
\end{cases}
\end{align}
Then, \(S_n\) is right-continuous, measure-inducing, grounded, and fulfils continuity conditions \eqref{contboun1} and \eqref{contboun2} for all \(n\in \N\,.\) Hence, we may define the Lebesgue integral of \(f\) w.r.t. the semi-copula \(S\) due to Theorem \ref{theconvfin} by
\begin{align}\label{exteli1}
\int_{[0,1]^d} f(u) \de S(u):=\pi_f(\overline{S}) = \lim_{n\to \infty} \psi_{S_n}(f) = \lim_{n\to \infty}\int_{[0,1]^d} f(u) \de S_n(u)\,,
\end{align}
see \cite[Definition 5.4]{Lux-2017} for the special case of a quasi-copula.
More generally, \eqref{exteli1} can be defined by an application of Proposition \ref{propconvfin} for general \(f\in \cF_{mi}^{c,l}([0,1]^d)\) using a linearly interpolated version of \(S_n\) in \eqref{semappx}.
\end{enumerate}
\end{remark}

\subsection{Non-compact domain}

In the case where the underlying space is non-compact, e.g., \(\Xi=\R^d\,,\) \(\Xi=\R_+^d\,,\) or \(\Xi=[0,1)^d\,,\) the induced signed measures \(\{\nu_{f_I}\}_{I\subseteq \{1,\ldots,d\}}\) are not necessarily finite. So, conditions \eqref{theconvfin1} and \eqref{theconvfin2} of Theorem \ref{theconvfin}, which yield boundedness of \(g\), are not sufficient for the existence of the integrals \(\int_{[0,1]^{|I|}} \overline{g}_I \de \nu_{f_I}\,,\) \(I\subseteq \{1,\ldots,d\}\,.\)
The following theorem provides the convergence of the integrals \((\psi_{g_n}(f))_{n\in \N}\) using a simple integrability condition on \(f\,.\) It extends the integration by parts formula in \cite[Proposition 2]{Tankov-2011} to the multivariate case and to integrators that are approximated by signed measures.

\begin{theorem}[Convergence on non-compact domain]\label{theconvthsec}~\\
For all \(n\in \N\,,\) let \(g_n\colon \R^d\to \R\) be right-continuous, grounded, and measure-inducing.
Let \(f\in \cF_{mi}^{c,l}(\R^d)\) be left-continuous with \(f_\emptyset\in \R\,.\)
Assume that there are distribution functions \(F_1,\ldots,F_d\colon \R\to [0,1]\,,\) a function \(g\colon \R^d\to \R\,,\) and \(\alpha>0\)
such that
\begin{enumerate}[(i)]
\item \label{theconvthsec1} \(g_n(x)\to g(x)\) for all \(x\in \R^d\,,\)
\item \label{theconvthsec1b} \(g_n^\emptyset \to g^\emptyset\,,\)
\item \label{theconvthsec2} \(|\overline{g_n}(x)| \leq \alpha\, (1-\max_{i=1,\ldots,d}\{F_i(x_i)\})\) for all \(x=(x_1,\ldots,x_d)\in \R^d\) and for all \(n\in \N\,,\) and
\item \label{theconvthsec3} the Lebesgue integral \(\int_0^1 f_I\left(F_{i_1}^{[-1]}(u),\ldots,F_{i_k}^{[-1]}(u)\right) \de u\) exists for all \(I=\{i_1,\ldots,i_k\}\subseteq \{1,\ldots,d\}\,,\) \(I\ne \emptyset\,.\)
\end{enumerate}
Then, \(\pi_f(\overline{g})\) is finite and
\begin{align}\label{convtheconvthsec}
\psi_{g_n}(f) \xrightarrow{n\to \infty} \pi_f(\overline{g})\,.
\end{align}
\end{theorem}

\begin{remark}
By Assumption \eqref{theconvthsec2} of Theorem \ref{theconvthsec}, \((\overline{g_n})_{n\in \N}\) is uniformly asymptotically bounded by the survival function \(\overline{F_{X_1,\ldots,X_d}}\) of a so-called comonotonic random vector \((X_1,\ldots,X_d) = (F_1^{[-1]}(U),\ldots,F_d^{[-1]}(U))\,,\) \(U\) uniformly distributed on \((0,1)\,,\) having marginal distribution functions \(F_1,\ldots,F_d\,.\) Indeed, by \eqref{eqtheconvthsec0}, we have
\[\overline{F_{X_1,\ldots,X_d}}(x)=\overline{M^d}(F_1(x_1),\ldots,F_d(x_d))=1-\max\{F_i(x_i)\}\]
for all \(x=(x_1,\ldots,x_d)\in \R^d\,,\) where \(M^d(u_1,\ldots,u_d):= \min\{u_1,\ldots,u_d\}\) denotes the upper Fr\'{e}chet copula.
 For \(x\to \infty\,,\) Assumption \eqref{theconvthsec2} yields, in particular, that \((\overline{g_n})_n\) is uniformly bounded by \(\alpha\,.\)
Assumption \eqref{theconvthsec3} states that for all \(I=\{i_1,\ldots,i_k\}\subseteq\{1,\ldots,d\}\,,\) \(I\ne \emptyset\,,\) the expectation of \(f_I(X_{i_1},\ldots,X_{i_k})\)  exists which guarantees, inductively by the integration by parts formula, the \(\nu_{f_I}\)- integrability of \(\overline{F_{X_{i_1},\ldots,X_{i_k}}}\,.\)
Hence, Assumption \eqref{theconvthsec2} and \eqref{theconvthsec3} yield that \(|(\overline{g_n})_I|\) is dominated by an integrable function for all \(n\), which allows an application of the dominated convergence theorem to obtain the convergence in \eqref{convtheconvthsec}.
\end{remark}

As will be shown below, Theorem \ref{theconvthsec} allows an extension of the Lebesgue integral of a possibly unbounded measure-inducing function \(f\colon [0,1)^d\) to the case where one integrates with respect to an \(L\)-Lipschitz semi-copula. Note that semi-copulas describe the dependence structure of capacities which generalize measures by requiring only monotonicity instead of \(\sigma\)-additivity, see, e.g., \cite{Durante-2016}.  For \(L=1\,,\) i.e., in the case of quasi-copulas, which do not necessarily induce signed measures, several applications to finance are given in \cite{Ansari-2021,Lux-2017,Yanez-2020,Tankov-2011} in the context of improved price bounds.
We make use of the following lemma which states that the survival function of an \(L\)-Lipschitz semi-copula is pointwise bounded by a multiple of the survival function of the upper Fr\'{e}chet copula defined by \eqref{defuppfrecop}.

\begin{lemma}\label{propsvsemcopbou}
For \(L>0\,,\) let \(S\colon [0,1]^d\to [0,1]\) be an \(L\)-Lipschitz semi-copula. Then it holds that
\begin{align*}
|\overline{S}(u)| \leq (2^{d-2}L+1) (1-\max_{1\leq i \leq d}\{u_i\})
\end{align*}
for all \(u=(u_1,\ldots,u_d)\in [0,1]^d\,.\)
\end{lemma}

The following result is a consequence of Theorem \ref{theconvthsec} and Lemma \ref{propsvsemcopbou}. By a simple integrability condition, it provides
the convergence of the integrals w.r.t. discretised \(L\)-Lipschitz semi-copulas in the case of a non-compact domain.

\begin{corollary}[Convergence on non-compact domain]\label{corconsemcopres}~\\
For all \(i\in \{1,\ldots,d\}\) and \(n\in \N\,,\) let \(F_{i,n}\colon \R\to[0,1]\) be the distribution function of a distribution on \((0,1)\) with finite support. Let \(S\) be an \(L\)-Lipschitz semi-copula for some \(L>0\,,\) and let \(f\in \cF_{mi}^{c,l}([0,1)^d)\) be left-continuous. Assume that
\begin{enumerate}[(i)]
\item \label{corconsemcopres1} \(F_{i,n}(u)\to u\) for all \(i\in \{1,\ldots,d\}\) and for all \(u\in [0,1]\,,\)
\item \label{corconsemcopres3} the Lebesgue integral \(\int_0^1 f_I(u,\ldots,u) \de u\) exists for all \(I\subseteq\{1,\ldots,d\}\,,\) \(I\ne \emptyset\,.\)
\end{enumerate}
Then \(\pi_f(\overline{S})\) exists and
\begin{align*}
\psi_{S\circ (F_{1,n},\ldots,F_{d,n})}(f) \to \pi_f(\overline{S})\,.
\end{align*}
\end{corollary}

\begin{remark}
\begin{enumerate}[(a)]
\item Similar to \eqref{exteli1}, Corollary \ref{corconsemcopres} allows an extension of the Lebesgue integral to the case where one integrates w.r.t. a semi-copula, where the integrand now can be an unbounded measure-inducing function. More precisely, let \(f\in \cF_{mi}^{c,l}([0,1)^d)\) be left-continuous and choose, for example, \(F_{i,n}:=F_n\) for all \(i\in \{1,\ldots,d\}\) with \(F_n\) defined by \eqref{semappx}. 
Then, \(F_{i,n}\) satisfies condition \eqref{corconsemcopres1} of Corollary \eqref{corconsemcopres}.
For \(S\in \cS_{d,L}\,,\) \(L>0\,,\) consider the approximation \(S_n:=S\circ (F_{1,n},\ldots,F_{d,n})\,.\) Then the Lebesgue integral of \(f\) w.r.t. \(S\) is defined by
\begin{align}\label{exteli2}
\int_{[0,1)^d} f(u) \de S(u):=\pi_f(\overline{S}) = \lim_{n\to \infty} \psi_{S_n}(f) = \lim_{n\to \infty}\int_{[0,1)^d} f(u) \de S_n(u)
\end{align}
whenever \(f\) satisfies the integrability condition \eqref{corconsemcopres3}. Since quasi-copulas are \(1\)-Lipschitz semi-copula, the integral in \eqref{exteli2} is also defined for quasi-copulas.
\item Let \(F_1,\ldots,F_d\) be distribution functions on \((0,\infty)\,.\) Applying the transformation formula \eqref{eqlemmargtraf}, the Lebesgue integral of a left-continuous function \(f\) w.r.t. the composition \(G:=S\circ(F_1,\ldots,F_d)\,,\) \(S\in \cS_{d,L}\,,\) is defined by
\begin{align}\label{exteli3}
\int_{\R_+^d} f(x) \de G(x) := \pi_f(\overline{G}) = \pi_{f\circ (F_1^{[-1]},\ldots,F_d^{[-1]})}(\overline{S})\,,
\end{align}
whenever \(h:=f\circ (F_1^{[-1]},\ldots,F_d^{[-1]})\in \cF_{mi}^{c,l}\) satisfies the integrability condition \eqref{corconsemcopres3} of Corollary \ref{corconsemcopres}.
In the special case where \(F\) is a \(d\)-dimensional distribution function on \(\R_+^d\) with marginal distribution functions \(F_1,\ldots,F_d\,,\) there exists by Sklar's theorem a copula \(C\in \cC_d\) such that \(F=C\circ (F_1,\ldots,F_d)\,.\) Then \eqref{exteli3} reduces to the integration by parts formula
\(\psi_F(f) = \pi_f(\overline{F})\) given by \eqref{luxst}.
\end{enumerate}

\end{remark}

\section{Applications to stochastic orderings}\label{appstoord}

As an application of the integration by parts formulas and the convergence results established in Sections \ref{secmr} and \ref{secvonv}, we extend several integral stochastic orderings to the class of (Lipschitz-) continuous semi-copulas. 

For \(d\)-variate distribution functions \(F,F'\) on \(\R^d\,,\) the \emph{integral stochastic ordering} \(\leq_{\cF}\) is defined w.r.t. a class \(\cF\) of measurable functions \(f\colon \R^d\to \R\) by
\begin{align}\label{defstochord}
F\leq_{\cF} F' ~~~ :\,\Longleftrightarrow ~~~ \psi_F(f)\leq \psi_{F'}(f) ~~~ \text{for all } f\in \cF \text{ such that the expectations exist},
\end{align}
see, e.g., \cite{Mueller-Stoyan-2002}.
Important classes of functions that generate stochastic orderings by \eqref{defstochord} are the class \(\cF^{\Delta}_-\) of \(\Delta\)-antitone functions, the class \(\cF^{\Delta}\) of \(\Delta\)-monotone functions, the class \(\cF_{sm}\) of supermodular functions,
the class \(\cF_{dcx}\) of directionally convex functions, and
the class \(\cF_{\uparrow}\) of increasing functions. We refer to Definition \ref{defdelant} for the respective definitions. 

Several integral stochastic orderings have a smooth generator \(\cG\,,\) i.e., \(\leq_{\cG}\) and \(\leq_{\cF}\) are identical, where \(\cG\) only contains smooth functions. It is known that, if the integral ordering \(\leq_{\cF}\) is closed under mixtures and closed under weak convergence, then it has a smooth generator, see, e.g., \cite[Corollary 2.5.6]{Mueller-Stoyan-2002}.
Since any smooth function on \([a,b]\subset \R^d\) is in the class \(\cF_{mi}^{c,l}([a,b])\cap \cF_{mi}^{c,u}([a,b])\,,\) see Corollary \ref{cordom}, integral orderings with a smooth generator can be extended to semi-distribution functions defined as follows. As before, we consider the domain \(\R_+^d\,.\)

\begin{definition}[Semi-distribution function on \(\R_+^d\)]~\\
We denote a function \(H\colon\R_+^d\to [0,1]\) as a \emph{semi-distribution function} if it can be decomposed into a semi-copula \(S\) and univariate distribution functions \(F_1,\ldots,F_d\) with \(F_i(0)=0\) for all \(i\in \{1,\ldots,d\}\) such that
\begin{align}\label{dessdf}
H(x)=S(F_1(x_1),\ldots,F_d(x_d)) ~~~ \text{for all } x=(x_1,\ldots,x_d)\in \R^d\,.
\end{align}
A semi-distribution function \(H\) is called a \emph{quasi-distribution function} if the semi-copula \(S\) in \eqref{dessdf} is a quasi-copula.
\end{definition}

Denote by \(\cH_+^d\) the class of semi-distribution functions on \(\R_+^d\) and denote by \(\cH^d_c\) and \(\cH^d_L\) the subclass of semi-distribution functions for which the semi-copula in \eqref{dessdf} is continuous and \(L\)-Lipschitz-continuous, respectively. Recall that, by Sklar's theorem, \(H\) in \eqref{dessdf} is a distribution function whenever \(S\) is a copula.

We extend the lower orthant and the upper orthant ordering for \(d\)-variate distribution functions to the class \(\cH_+^d\) of semi-distribution functions as follows. 

\begin{definition}[Lower orthant order and upper orthant order on \(\cH^d_+\)]~\\
For \(H,H'\in \cH_+^d\,,\) define the
\begin{enumerate}[(i)]
\item \emph{lower orthant order} \(H\leq_{lo} H'\) by \(H(x)\leq H'(x)\) for all \(x\in \R^d\,,\)
\item \emph{upper orthant order} \(H\leq_{uo} H'\) by \(\overline{H}(x)\leq \overline{H'}(x)\) for all \(x\in \R^d\,.\)
\end{enumerate}
\end{definition}

The following result characterizes the orthant orders on the class \(\cH^d_+\) of semi-distribution functions on \(\R_+^d\,.\)

\begin{theorem}[Characterization of the orthant orders on \(\cH^d\)]\label{cgoo}~\\
For \(H,H'\in \cH^d_{L}\,,\) the following statements hold true:
\begin{enumerate}[(i)]
\item \label{cgoo1} \(H\leq_{lo} H'\) if and only if \(\pi_f(\overline{H})\leq \pi_f(\overline{H'})\) for all \(\Delta\)-antitone functions \(f\colon\R_+^d\to \R\,.\)
\item \label{cgoo2} \(H\leq_{uo} H'\) if and only if \(\pi_f(\overline{H})\leq \pi_f(\overline{H'})\) for all \(\Delta\)-monotone functions \(f\colon\R_+^d\to \R\,.\)
\end{enumerate}
\end{theorem}

For the subclasses of distribution functions and quasi-distribution functions, the above theorem is given in \cite{Rueschendorf-1980} and \cite{Lux-2017}, respectively. We refer to \cite{Ansari-2021,Lux-2017,Tankov-2011} for various applications to improved option price bounds under market-implied dependence information.

\section*{Acknowledgements}

The author thanks Ludger R\"{u}schendorf for a discussion of the subject and Thibaut Lux for a discussion of \cite[Formula (5.3)]{Lux-2017}.
The author gratefully acknowledges the support of the Austrian Science Fund (FWF) project 
{P 36155-N} \emph{ReDim: Quantifying Dependence via Dimension Reduction}
and the support of the WISS 2025 project 'IDA-lab Salzburg' (20204-WISS/225/197-2019 and 20102-F1901166-KZP).


\section*{Appendix A: Properties of measure-inducing functions}\label{Appmif}

\setcounter{subsection}{0}
\setcounter{theorem}{0}
\renewcommand{\thesection}{A.\arabic{section}}
\renewcommand{\thesubsection}{A.\arabic{subsection}}
\renewcommand{\thelemma}{A.\arabic{lemma}}
\renewcommand{\thetheorem}{A.\arabic{theorem}}
\renewcommand{\theproposition}{A.\arabic{proposition}}
\renewcommand{\thedefinition}{A.\arabic{definition}}

In the sequel, we provide various properties of measure-inducing functions concerning left- and right-continuous versions, transformations, marginal functions, the Hardy-Krause variation, the Jordan decomposition, and survival functions. All proofs are deferred to Appendix B.

\subsection{Left- and right-continuity}

Denote a multivariate function as left-/right-continuous if it is componentwise left-/right-continuous. By the following result, a left-/right-continuous measure-inducing function has the desirable property that the induced signed measure is generated by half-open intervals and can be expressed by the difference operator \(\Delta_{x,y}\) in \eqref{defdvdo}.

\begin{proposition}\label{lemfmi}
For \(f\in \cF_{mi}(\Xi)\,,\) the following statements hold true.
\begin{enumerate}[(i)]
\item \label{lemfmi1} If \(f\) is left-continuous, then the induced signed measure \(\nu_f\) is generated by
\begin{align}
\nu_f\left([x,y)\right) = \Delta_{x,y}[f]
\end{align}
for all \(x,y\in \Xi\) with \(x\leq y\,.\)
\item \label{lemfmi2} If \(f\) is right-continuous, then the induced signed measure \(\nu_f\) is generated by
\begin{align}\label{eqfmi2}
\nu_f\left((x,y]\right) = \Delta_{x,y}[f]
\end{align}
for all \(x,y\in \Xi\) with \(x\leq y\,.\)
\item \label{lemfmi3} If \(f\) is continuous, then the induced signed measure \(\nu_f\) is generated by
\begin{align*}
\nu_f\left(B\right) = \Delta_{x,y}[f]
\end{align*}
for all  \(x=(x_1,\ldots,x_d),y=(y_1,\ldots,y_d)\in \R^d\) such that \(a\leq x\leq y\leq b\,,\) where \(B=\bigtimes_{i=1}^d B_i\) with \(B_i\in\{[x_i,y_i],(x_i,y_i],[x_i,y_i),(x_i,y_i)\}\,,\) \(1\leq i\leq d\,.\)
\end{enumerate}
\end{proposition}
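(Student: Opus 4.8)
The plan is to obtain all three parts from the defining identity $\nu_f([x,y])=D_{x,y}[f]$ of Definition~\ref{defmifun} by approximating half-open boxes from inside by \emph{closed} boxes and invoking continuity of $\nu_f$ from below (legitimate for a signed measure, since $\nu_f^+$ and $\nu_f^-$ are both continuous from below and at least one of them is finite, so no $\infty-\infty$ arises); \eqref{lemfmi3} will then drop out of \eqref{lemfmi1}--\eqref{eqfmi2} together with a remark on degenerate boxes. I treat \eqref{eqfmi2} in detail, \eqref{lemfmi1} being its mirror image. For \eqref{eqfmi2}, fix $x\le y$ in $\Xi$; we may assume $x_i<y_i$ for every $i$ (otherwise $(x,y]=\emptyset$ and $\Delta_{x,y}[f]=0$). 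Picking $\delta^{(n)}\downarrow 0$ with $a\le x+\delta^{(n)}\le y$ and $x+\delta^{(n)}\in\Xi$, the closed boxes $[x+\delta^{(n)},y]$ increase to $(x,y]$, so continuity from below gives $\nu_f((x,y])=\lim_{n\to\infty}\nu_f([x+\delta^{(n)},y])=\lim_{\delta\downarrow 0}D_{x+\delta,y}[f]$, and it remains to prove $\lim_{\delta\downarrow 0}D_{x+\delta,y}[f]=\Delta_{x,y}[f]$.

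To compute this limit I expand $D_{x+\delta,y}[f]$ via \eqref{defitlimi} as an iterated limit of $\Delta_{x+\delta-\delta',\,y+\varepsilon'}[f]=\sum_{S\subseteq\{1,\dots,d\}}(-1)^{|S|}f(c^S)$, where the vertex $c^S$ has $i$-th coordinate $x_i+\delta_i-\delta'_i$ for $i\in S$ and $y_i+\varepsilon'_i$ for $i\notin S$ (coordinates with $x_i=a_i\in\Xi_i$ or $y_i=b_i\in\Xi_i$ being absorbed by the truncations $a\vee(\cdot)$, $(\cdot)\wedge b$ and the superfluous-limit convention noted after \eqref{defitlimi}). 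Since the iterated limit defining $D$ does not depend on the order of its $2d$ one-sided limits, I let all $\varepsilon'_i\downarrow 0$ first; because $y_i+\varepsilon'_i\downarrow y_i$ and $f$ is componentwise right-continuous, this merely sets $\varepsilon'_i=0$. Letting then the $\delta'_i\downarrow 0$ replaces $f$, in the coordinates of $S$, by its iterated componentwise left-hand limits; writing $g_S$ for the resulting function, one gets for every $\delta>0$ the identity $D_{x+\delta,y}[f]=\sum_{S}(-1)^{|S|}g_S(c^S(x+\delta,y))$, with $c^S(x+\delta,y)$ having $i$-th coordinate $x_i+\delta_i$ for $i\in S$ and $y_i$ for $i\notin S$. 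Finally I let $\delta\downarrow 0$ term by term: for fixed $S$ only the $\delta_i$ with $i\in S$ appear, and I invoke repeatedly the elementary one-variable fact that a function $h$ which is right-continuous at $t_0$ and has a left-hand limit at every point of a right-neighbourhood of $t_0$ satisfies $\lim_{\delta\downarrow 0}h((t_0+\delta)^-)=h(t_0)$ (take $t_\delta\in(t_0,t_0+\delta)$ with $h(t_\delta)$ within $\delta$ of the left-hand limit of $h$ at $t_0+\delta$, and let $\delta\downarrow 0$, using right-continuity along $t_\delta\downarrow t_0$). Applying this coordinate by coordinate --- the function left after each step remaining right-continuous with one-sided limits in the not-yet-treated coordinates --- yields $\lim_{\delta\downarrow 0}g_S(c^S(x+\delta,y))=f(c^S(x,y))$, and therefore $\lim_{\delta\downarrow 0}D_{x+\delta,y}[f]=\sum_S(-1)^{|S|}f(c^S(x,y))=\Delta_{x,y}[f]$, proving \eqref{eqfmi2}.

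Statement \eqref{lemfmi1} is the mirror image: using $[x,y-\varepsilon^{(n)}]\uparrow[x,y)$ one gets $\nu_f([x,y))=\lim_{\varepsilon\downarrow 0}D_{x,y-\varepsilon}[f]$; in $D$ the $\delta'$-limits now collapse by left-continuity and the $\varepsilon'$-limits become right-hand limits, and the mirror image of the one-variable fact finishes the argument. For \eqref{lemfmi3}, continuity of $f$ trivialises every limit in \eqref{defitlimi}, so $\nu_f([x,y])=D_{x,y}[f]=\Delta_{x,y}[f]$; together with \eqref{lemfmi1}--\eqref{eqfmi2} this gives $\nu_f([x,y])=\nu_f([x,y))=\nu_f((x,y])=\Delta_{x,y}[f]$. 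The faces $\{z_i=x_i\}\cap[x,y]$ and $\{z_i=y_i\}\cap[x,y]$, as well as their pairwise intersections, are closed degenerate boxes, whose $\nu_f$-measure equals the corresponding $D$-value, which under continuity equals the corresponding $\Delta$-operator, which is $0$ by Remark~\ref{remindmeas}\eqref{eqremindmeas2}; since $[x,y]\setminus B$ is a finite union of such faces, inclusion--exclusion yields $\nu_f([x,y]\setminus B)=0$ and hence $\nu_f(B)=\nu_f([x,y])=\Delta_{x,y}[f]$ for every box $B=\bigtimes_{i=1}^{d}B_i$ with $B_i\in\{[x_i,y_i],(x_i,y_i],[x_i,y_i),(x_i,y_i)\}$ (the case $x_i=y_i$ for some $i$ being trivial).

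The delicate step is the handling of the iterated one-sided limits in the middle paragraph: one must know that $g_S$, obtained by successively taking componentwise left-hand limits of $f$ in the coordinates of $S$, is well defined independently of the order, still possesses one-sided limits, and remains right-continuous in the remaining coordinates, so that the one-variable fact can be iterated. This is exactly where the measure-inducing assumption is used substantively rather than cosmetically: each such iterated one-sided limit equals, up to sign, the $\nu_f$-measure of a lower-dimensional box, so its existence and order-independence follow from the continuity of $\nu_f$ and not from pointwise properties of $f$. I would isolate this as a preliminary lemma on $\cF_{mi}(\Xi)$ extending the single-coordinate requirement built into the definition of $\cF_{mi}(\Xi)$; with it, the computations above go through, and the bookkeeping for boundary coordinates is routine thanks to the truncations in \eqref{defitlimi}, needing --- consistently with the statement being for $\cF_{mi}(\Xi)$ rather than $\cF_{mi}^c(\Xi)$ --- no continuity-at-the-boundary hypothesis.
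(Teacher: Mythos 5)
Your proposal is correct and follows essentially the same route as the paper's proof: inner approximation of the half-open box by closed boxes, continuity of the (signed) measure from below, and then a coordinate-by-coordinate analysis showing that the relevant iterated one-sided limits collapse to $\Delta_{x,y}[f]$ via the elementary fact that a one-sided limit of the opposite one-sided limits recovers the function value under the assumed one-sided continuity (the paper writes out case (i) and mirrors it for (ii); you do the reverse). Your treatment of (iii) via null degenerate faces and your flagged preliminary lemma on the order-independence of the iterated limits are slightly more explicit than the paper's, but not a different argument.
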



For a function \(f\colon \Xi\to\R\,,\) denote by \(f_-\) and \(f_+\) its componentwise left- and right-continuous version, respectively, whenever it exists.
The following lemma states that, under the continuity assumption on \(f\) at the boundary of the domain, the signed measure induced by \(f\) coincides with the signed measure induced by \(f_-\) and \(f_+\,,\) respectively. This property is well-known when \(F\) is a univariate distribution function. Then the corresponding probability measure \(P\) coincides with the measure induced by the left-continuous version \(F_-\,,\) see \eqref{deffP}.

\begin{lemma}\label{lemunimifun}
Let \(f\colon \Xi\to \R\) be a function that satisfies the boundary conditions \eqref{contboun1} and \eqref{contboun2}.
Assume that \(f_-\) and \(f_+\) exist.
If one of the functions \(f\,,\) \(f_-\,,\) and \(f_+\) is measure-inducing, then also the others are measure-inducing and it holds that
\begin{align*}
\nu_f=\nu_{f_-}=\nu_{f_+}\,,
\end{align*}
i.e., the measures induced by \(f\,,\) \(f_-\,,\) and \(f_+\,,\) respectively, coincide.
\end{lemma}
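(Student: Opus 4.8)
The plan is to reduce everything to the key observation, already essentially contained in the definition of $D_{x,y}[\cdot]$ (see \eqref{defitlimi} and the surrounding discussion), that the value $D_{x,y}[f]$ depends on $f$ only through the limits of $f$-quasi-volumes of boxes approximating $[x,y]$ from the outside, and hence is insensitive to changing $f$ at individual vertices by passing to a one-sided continuous version. First I would fix $x,y\in\Xi$ with $x\le y$ and compare the three iterated-limit expressions $D_{x,y}[f]$, $D_{x,y}[f_-]$, $D_{x,y}[f_+]$. Since $f_-$ and $f_+$ are obtained from $f$ by componentwise one-sided limits, we have $f_-(z)=\lim_{\delta\downarrow 0} f(z-\delta)$ and $f_+(z)=\lim_{\varepsilon\downarrow 0} f(z+\varepsilon)$ (componentwise, understood as iterated limits), at least at interior points; at boundary points the boundary conditions \eqref{contboun1}–\eqref{contboun2} guarantee that $f$ already agrees with the relevant one-sided limit there, so $f_-$ and $f_+$ coincide with $f$ on $\partial\Xi$.

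Next I would carry out the comparison coordinatewise. Writing out $\Delta_{a\vee(x-\delta),\,b\wedge(y+\varepsilon)}[f]$ as an alternating sum of $2^d$ evaluations of $f$ at the vertices of the shrunk/expanded box, and then applying the outer limits $\delta_i,\varepsilon_i\downarrow 0$ in the prescribed iterated order, one sees that each vertex evaluation is composed with an extra one-sided limit in each coordinate. For $f$ the relevant nested one-sided limits are exactly those defining $D_{x,y}[f]$; for $f_-$ each vertex value is already a left-limit of $f$, so composing with the further outer $\delta_i,\varepsilon_i\downarrow 0$ limits collapses (by uniqueness/stability of iterated limits — left-limit of a left-limit is the left-limit, and a one-sided limit of $f_-$ from the appropriate side recovers the same value that appears in $D_{x,y}[f]$), and similarly for $f_+$. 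Thus the iterated limits producing $D_{x,y}[f_-]$ and $D_{x,y}[f_+]$ exist whenever that producing $D_{x,y}[f]$ does, and all three are equal; the same argument run backwards gives the converse. Consequently, if any one of $f,f_-,f_+$ is measure inducing with induced measure $\nu$, then \eqref{defmind} holds with the same $\nu$ for each of the other two, so each is measure inducing and $\nu_f=\nu_{f_-}=\nu_{f_+}$, the signed measure being uniquely determined by its values on all closed boxes $[x,y]$ (which generate $\cB(\Xi)$), cf. the uniqueness remark after Definition \ref{defmifun}.

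The one genuinely delicate point — the main obstacle — is the careful bookkeeping of the \emph{order} of limits: $D_{x,y}[\cdot]$ is defined as a specific iterated limit (with the claim that it is independent of the permutation), and when we also pass to $f_\pm$ we are inserting, inside each term, yet another layer of one-sided limits. I would handle this by processing one coordinate $i$ at a time: freeze all other coordinates and observe that in coordinate $i$ the expression $D_{x,y}[f]$ involves, schematically, $\lim_{\delta_i\downarrow 0}\lim_{\varepsilon_i\downarrow 0}$ of differences $f(\ldots,y_i+\varepsilon_i,\ldots)-f(\ldots,x_i-\delta_i,\ldots)$; replacing $f$ by $f_+$ replaces $y_i+\varepsilon_i$ by $\varepsilon_i'\downarrow 0$ inside, and then the outer $\varepsilon_i\downarrow 0$ is redundant, giving the same nested value, while the $x_i-\delta_i$ argument of $f_+$ still produces, under $\delta_i\downarrow 0$, the left-limit of $f$ in that slot — exactly the term in $D_{x,y}[f]$ (the boundary case $x_i=a_i\in\Xi_i$ being covered by \eqref{contboun1}, which makes the $\delta_i$-limit superfluous and forces agreement). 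Iterating over $i=1,\dots,d$ and invoking that a uniformly convergent iterated limit may have its innermost stage replaced by its value, one obtains the termwise identification; the analogous argument for $f_-$ is symmetric. Once this coordinatewise reduction is set up, the rest is routine, and I would relegate the fully spelled-out induction to the appendix.
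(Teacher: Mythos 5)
Your proposal is correct and follows essentially the same route as the paper: the paper's (much terser) proof also rests on the single observation that $D_{x,y}[\cdot]$ depends only on the componentwise one-sided limits of the function (which coincide for $f$, $f_-$, $f_+$ at interior points), with the boundary conditions \eqref{contboun1}--\eqref{contboun2} handling the vertices where no outer approximation is taken. Your extra bookkeeping of the nested iterated limits (left-limit of right-limits, etc.) is exactly the kind of detail the paper leaves implicit, so nothing is missing or superfluous.
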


The following example shows that in the above lemma the continuity assumption at the boundary of \(\Xi\) cannot be omitted.

\begin{example}\label{exlemunimifun}
For \(\Xi=[0,1]\,,\) let \(f\colon \Xi\to \R\) be defined by \(f(x)=\1_{\{x>0\}}\,.\) Then \(f\) does not satisfy condition \eqref{contboun1}, i.e., \(f\) is not continuous at the lower boundary of \(\Xi\,.\) Denote by \(\delta_{\{x\}}\) the one-point probability measure with mass \(1\) in \(x\,.\) Then \(\nu_f = \delta_{\{0\}}\,.\) But \(f_+(x)=1\) for all \(x\in [0,1]\) and, thus, \(\nu_{f_+}\) is the null measure. However, if the domain of \(f\) is extended to \((-\varepsilon,1]\) for some \(\varepsilon>0\) and if \(g(x)=\1_{\{x>0\}}\) for \(x\in (-\varepsilon,1]\,,\) then the induced signed measure is the one-point probability measure in \(0\,,\) which coincides with the measure induced by the right-continuous version of \(g\,,\) i.e., \(\nu_g=\nu_{g_+}=\delta_{\{0\}}\,,\) compare Remark \ref{remindmeas}\eqref{eqremindmeas3}.
\end{example}

\subsection{Transformation}

For a signed measure \(\nu\) on \(\Xi\) and for measurable functions \(h_i\colon \Xi_i\to \R\,,\) \(1\leq i \leq d\,,\) denote by \(\nu^{h}:=(\nu^+)^{h}-(\nu^-)^{h}\) the image of \(\nu\) under \(h=(h_1,\ldots,h_d)\,,\) where for a (non-negative) measure \(\mu\) on \(\Xi\,,\) the image \(\mu^h\) is defined by \(\mu^h(A):=\mu(h^{-1}(A))\) for all Borel-measurable sets \(A\subseteq \R^d\,,\) where \(h^{-1}(A)\) denotes the preimage of \(A\) under \(h\,.\)

By the following result, we can restrict our attention of studying measure-inducing functions to domains \(\Xi_i\) of the type \([0,1]\,,\) \([0,1)\,,\) \((0,1]\,,\) or \((0,1)\) because every closed, half-open or open interval can be identified with one of these intervals under a strictly increasing transformation.

\begin{proposition}[Transformation of domains]\label{propmeatraf}~\\
For \(1\leq i \leq d\,,\) let \(\phi_i\colon \Xi_i\to \R\) be continuous and strictly increasing. Denote by \(\phi^{-1}_i\colon \phi_i(\Xi_i) \to \Xi_i\) the inverse of \(\phi_i\,.\) If \(f\colon \Xi \to \R\) is measure-inducing, then \(f\circ (\phi_1^{-1},\ldots,\phi_d^{-1})\) is measure-inducing with induced signed measure
\begin{align}\label{eqpropmeatraf}
\nu_{f\circ (\phi_1^{-1},\ldots,\phi_d^{-1})} = \nu_f^{(\phi_1,\ldots,\phi_d)}\,.
\end{align}
\end{proposition}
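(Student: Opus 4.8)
The plan is to reduce the $d$-dimensional statement to a verification that the defining identity \eqref{defmind} is preserved under the coordinatewise homeomorphisms $\phi_i$, and to identify the resulting measure with the image measure $\nu_f^{(\phi_1,\ldots,\phi_d)}$. Write $\Phi:=(\phi_1,\ldots,\phi_d)\colon \Xi\to\Phi(\Xi)$ and $\Psi:=\Phi^{-1}=(\phi_1^{-1},\ldots,\phi_d^{-1})$; since each $\phi_i$ is a continuous strictly increasing bijection from the interval $\Xi_i$ onto the interval $\phi_i(\Xi_i)$, $\Phi$ is an order isomorphism mapping $\Xi$ onto $\Phi(\Xi)=:\widetilde\Xi$, which is again a product of intervals with the same topological type of endpoints. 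Set $g:=f\circ\Psi\colon\widetilde\Xi\to\R$ and $\tilde a:=\inf\widetilde\Xi$, $\tilde b:=\sup\widetilde\Xi$ componentwise; note $\tilde a_i=\lim_{t\downarrow a_i}\phi_i(t)$ and $\tilde b_i=\lim_{t\uparrow b_i}\phi_i(t)$, and $\tilde a_i\in\widetilde\Xi_i$ iff $a_i\in\Xi_i$, likewise for the upper endpoint.

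\textbf{Key steps.} First I would record the algebraic fact that the $d$-variate difference operator is natural under $\Phi$: for $\xi\le\eta$ in $\widetilde\Xi$ one has $\Delta_{\xi,\eta}[g]=\Delta_{\Psi(\xi),\Psi(\eta)}[f]$, because $g(\xi)=f(\Psi(\xi))$ and $\Psi$ acts coordinatewise, so that every vertex $(\xi_1\text{ or }\eta_1,\ldots,\xi_d\text{ or }\eta_d)$ of the box $[\xi,\eta]$ is carried by $\Psi$ exactly to the corresponding vertex of $[\Psi(\xi),\Psi(\eta)]$, and the signs $(-1)^{\#\{\text{lower endpoints}\}}$ match term by term; here I use that $\Psi$ preserves the componentwise order so "lower" and "upper" are respected. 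Second, since each $\phi_i$ is a homeomorphism of intervals, $\phi_i^{-1}$ is continuous, so for $\xi\in\widetilde\Xi$ and $\delta,\varepsilon\downarrow 0$ we have $\tilde a\vee(\xi-\delta)\to\xi$ and $\tilde b\wedge(\xi+\varepsilon)\to\xi$ componentwise, and applying $\Psi$ componentwise, $\Psi(\tilde a\vee(\xi-\delta))\uparrow$-approximates $\Psi(\xi)$ from below and $\Psi(\tilde b\wedge(\eta+\varepsilon))\downarrow$-approximates $\Psi(\eta)$ from above, staying inside $\Xi$; moreover $\Psi$ maps the clamped corner $\tilde a_i$ to $a_i$ (and $\tilde b_i$ to $b_i$) so the "superfluous limit" bookkeeping on the boundary of $\widetilde\Xi$ matches that on $\partial\Xi$. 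Consequently, using the step-one identity inside the limit \eqref{defitlimi},
\begin{align*}
D_{\xi,\eta}[g]=\lim_{\delta_i,\varepsilon_i\downarrow 0\atop i\in\{1,\ldots,d\}}\Delta_{\tilde a\vee(\xi-\delta),\,\tilde b\wedge(\eta+\varepsilon)}[g]=\lim_{\delta_i,\varepsilon_i\downarrow 0\atop i\in\{1,\ldots,d\}}\Delta_{\Psi(\tilde a\vee(\xi-\delta)),\,\Psi(\tilde b\wedge(\eta+\varepsilon))}[f],
\end{align*}
and the right-hand side is a legitimate outer approximation of $[\Psi(\xi),\Psi(\eta)]$ by boxes of the form $B_{\Psi(\xi)-\delta',\Psi(\eta)+\varepsilon'}$ in $\Xi$; by \eqref{defmind0} this equals $\nu_f([\Psi(\xi),\Psi(\eta)])$. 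Since $\Psi([\xi,\eta])=[\Psi(\xi),\Psi(\eta)]$ and $\Psi=\Phi^{-1}$, that is $\nu_f(\Phi^{-1}([\xi,\eta]))=\nu_f^{\Phi}([\xi,\eta])$ by definition of the image of the Jordan parts. Third, $\nu_f^{\Phi}$ is a well-defined signed measure on $\widetilde\Xi$: at least one of $\nu_f^+,\nu_f^-$ is finite, hence the same holds for $(\nu_f^+)^\Phi,(\nu_f^-)^\Phi$ (image of a finite measure is finite), so $\nu_f^\Phi=(\nu_f^+)^\Phi-(\nu_f^-)^\Phi$ takes at most one infinite value and $\nu_f^\Phi(\emptyset)=0$; $\Phi$ being a Borel isomorphism, $\Phi^{-1}(A)\in\cB(\Xi)$ for $A\in\cB(\widetilde\Xi)$. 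Combining, $D_{\xi,\eta}[g]=\nu_f^\Phi([\xi,\eta])$ for all $\xi\le\eta$ in $\widetilde\Xi$, which is exactly Definition \ref{defmifun} applied to $g=f\circ\Psi$ with induced measure $\nu_f^\Phi$; this gives \eqref{eqpropmeatraf}.

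\textbf{Main obstacle.} The only genuinely delicate point is the interchange of $\Psi$ with the outer-approximation limits at the boundary of $\widetilde\Xi$: one must check that when $\xi_i=\tilde a_i\in\widetilde\Xi_i$ the clamping $\tilde a_i\vee(\xi_i-\delta_i)=\tilde a_i$ corresponds under $\phi_i^{-1}$ exactly to clamping at $a_i$, i.e.\ that $\phi_i^{-1}(\tilde a_i)=a_i$ whenever $a_i\in\Xi_i$, and that when $a_i\notin\Xi_i$ (so $\tilde a_i\notin\widetilde\Xi_i$) the approximating sequence $\phi_i^{-1}(\xi_i-\delta_i)$ still decreases to $\phi_i^{-1}(\xi_i)$ and stays in $\Xi_i$ — both follow from monotonicity and continuity of $\phi_i^{-1}$ on the interval $\phi_i(\Xi_i)$, but they are what makes the "superfluous limit" remark in \eqref{defitlimi} transfer correctly. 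Everything else is the routine naturality of $\Delta_{x,y}$ under coordinatewise order isomorphisms and the standard fact that images of $\sigma$-finite signed measures along Borel isomorphisms are again signed measures.
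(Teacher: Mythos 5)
Your proposal is correct and follows essentially the same route as the paper: transform the difference operator coordinatewise via the order isomorphism, use the measure-inducing property of \(f\) together with continuity of (signed) measures to pass the outer-approximation limit through, and identify the result with the image measure \(\nu_f^{(\phi_1,\ldots,\phi_d)}\). Your additional care about the boundary clamping and about why the image of the Jordan parts is again a signed measure is sound but does not change the argument.
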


\subsection{Marginal functions and marginal measures}

Similar to the definition of the lower \(I\)-marginal in \eqref{defqmargl}, the \emph{upper \(I\)-marginal} \(f^I\) of \(f\) is defined by the function
\begin{align}\label{defqmarg}
f^I(x_{i_1},\ldots,x_{i_k}) := \lim_{x_j\uparrow b_j\atop j\notin I} f(x_1,\ldots,x_d)\,, ~~~x_{i_j}\in \Xi_{i_j} ~\text{for} ~j=1,\ldots,k\,,
\end{align}
whenever the iterated limit exists independent of the arrangement of the limits.
For \(I=\emptyset\,,\) define the constants
\begin{align}\label{eqfmess}
f_\emptyset &:= \lim_{x_j\downarrow a_j\atop j=1,\ldots,d} f(x_1,\ldots,x_d)\,,&
f^\emptyset &:= \lim_{x_j\uparrow b_j\atop j=1,\ldots,d} f(x_1,\ldots,x_d)\,,
\end{align}
whenever the limits exist.

For a measure-inducing function, its \(I\)-marginals are in general not measure-inducing how the following examples show.
\begin{example}\label{exmindgb}
\begin{enumerate}[(a)]
\item \label{exmindgba} Consider \(f\colon \R^2\to \R\) defined by \(f(x,y)=\1_{y\in \Q}(x,y)\,.\) Then \(f\) induces the null measure because \(\Delta_{\varepsilon_1}^1 \Delta_{\varepsilon_2}^2 f(x,y)=0\) for all \(x,y\in \R\,.\) However, the componentwise left- and right-hand limits of \(f\) do not exist and \(f^{\{2\}}(y) = \1_{y\in \Q}(y)\) does not induce a signed measure.
\item Let \(f\colon \R_+^2\to \R\) be defined by \(f(x,y):=xy+\sin(x)\,.\) Then, \(f\) induces the Lebesgue measure on \(\R^2\) by \eqref{defmind}. But \(f_{\{1\}}(x)=\sin(x)\) does not induce a signed measure on \(\R_+\) by \eqref{defmind} because it is bounded and has unbounded total variation.
\end{enumerate}
\end{example}

For \(I=\{i_1,\ldots,i_k\}\subseteq \{1,\ldots,d\}\) and for a signed measure \(\nu\) on \(\Xi\,,\) denote as usual by \(\nu^I\) the \(I\)-marginal signed measure of \(\nu\) defined by
\begin{align}\label{defImargmeas}
\nu^I(A_{i_1}\times \cdots \times A_{i_k}):=\nu(B_1\times \cdots \times B_d)
\end{align}
for all Borel measurable sets \(A_{i_j}\subseteq \Xi_{i_j}\,,\) \(j=1,\ldots,k\,,\) where \(B_i:=A_i\) for \(i\in I\) and \(B_i:=\Xi_i\) for \(i\notin I\,.\)

The following example shows that, in general, the signed measure  induced by the lower/upper \(I\)-marginal of a measure-inducing function does not coincide with the \(I\)-marginal of the induced signed measure.

\begin{example}[\(\nu_{f_I}\ne \nu_f^I\) and \(\nu_{h^I}\ne \nu_h^I\)]\label{exmmamibl}~
\begin{enumerate}[(a)]
\item \label{exmmamibl1} Consider the function \(f\colon \R_+^2\to \R\) defined by \(f(x_1,x_2):=\1_{\{x_1>1\}}\cdot\1_{\{x_2>1\}}\,.\) Then \(f\) induces the measure \(\nu_g=\delta_{(1,1)}\,,\) where \(\delta_x\) denotes the one-point measure with mass \(1\) in \(x\,.\) For \(I=\{1\}\subseteq \{1,2\}\,,\) it holds that \(f_{\{1\}}(x_1)=f(x_1,0)=0\) for all \(x_1\in \R_+\,.\) So, \(f_{\{1\}}\) induces the null measure \(\nu_{f_{\{1\}}}=0\,.\) But the \(\{1\}\)-marginal measure of \(\nu_f\) is the one-point probability measure in \(x_1=1\,,\) i.e., \(\nu_f^{\{1\}}=\delta_1\,.\)
\item Consider the function \(h\colon \R_+^2\to \R\) defined by \(h(x_1,x_2):=\1_{\{x_1<1\}}\cdot\1_{\{x_2<1\}}\,.\) Then \(h\) induces the measure \(\nu_h=\delta_{(1,1)}\,,\) For \(I=\{1\}\subseteq \{1,2\}\,,\) it holds that \(h^{\{1\}}(x_1)=h(x_1,\infty)=0\) for all \(x_1\in \R_+\,.\) So, \(h^{\{1\}}\) induces the null measure \(\nu_{h^{\{1\}}}=0\,.\) But the \(\{1\}\)-marginal measure of \(\nu_h\) is the one-point probability measure in \(x_1=1\,,\) i.e., \(\nu_h^{\{1\}}=\delta_1\,.\)
\end{enumerate}
\end{example}

For the integration by parts formula \eqref{genformi1}, we need that the lower and upper \(I\)-marginals of a measure-inducing function are measure-inducing. Similar to the definition of the class \(\cF_{mi}^{c,l}(\Xi)\) in \eqref{defclassmif}, define the class
\begin{align}
\nonumber \cF_{mi}^{c,u}(\Xi)&:=\left\{ f\in \cF_{mi}^c\mid f^I\in \cF_{mi}^c(\Xi_I) \text{ for all } I\subset \{1,\ldots,d\}\,, I\ne \emptyset\right\}\,.
\end{align}
of measure-inducing functions for which also the upper \(I\)-marginals are measure-inducing.

By the following lemma, the upper \(I\)-marginal of a bounded, grounded, and measure-inducing function can be associated with the \(I\)-marginal of the induced signed measure. This property is used in the proof of formula \eqref{genformi1}.

\begin{lemma}\label{lemmargdisgrou}
Let \(f\in \cF_{mi}^c(\Xi)\) be bounded and grounded. Then,
\begin{enumerate}[(i)]
\item \label{lemmargdisgrou1}\(f\in \cF_{mi}^{c,l}(\Xi)\cap \cF_{mi}^{c,u}(\Xi)\,,\)
\item \label{lemmargdisgrou2} for \(I\subseteq \{1,\ldots,d\}\,,\) it holds that \(\nu_{f^I}=\nu_f^I\,.\)
\end{enumerate}
\end{lemma}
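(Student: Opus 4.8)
The plan is to reduce everything to the bottom case $I = \{1,\dots,d\}\setminus\{d\}$ (a single dropped coordinate) and then iterate, using Proposition \ref{propmeatraf} to assume without loss of generality that each $\Xi_i$ is one of $[0,1]$, $[0,1)$, $(0,1]$, $(0,1)$; boundedness and groundedness are preserved by increasing transformations, so this costs nothing. First I would set $I = \{1,\dots,d-1\}$ and show that $h^I(x_1,\dots,x_{d-1}) = \lim_{x_d\uparrow b_d} h(x_1,\dots,x_d)$ exists: since $h\in\cF_{mi}^c(\Xi)$ the componentwise right-hand limits exist, and the partial map $x_d\mapsto h(\cdot,x_d)$ has, for fixed other coordinates, a quasi-volume structure controlled by $\nu_h$ on half-open boxes (Proposition \ref{lemfmi}\eqref{lemfmi2} after passing to the right-continuous version via Lemma \ref{lemunimifun}); boundedness of $h$ plus finiteness of $\nu_h$ (Remark \ref{remindmeas}\eqref{eqremindmeas2a}) give that $x_d\mapsto h(x_1,\dots,x_{d-1},x_d)$ has bounded variation with a uniform bound, hence the limit as $x_d\uparrow b_d$ exists. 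Groundedness then kills the lower marginals $h_J$ for $J\subsetneq\{1,\dots,d\}$, so those trivially induce the null measure; this is the content of the remark following the definition of grounded, and it handles most of \eqref{lemmargdisgrou1}.

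Next I would identify $\nu_{h^I}$ with $\nu_h^I$ for $I=\{1,\dots,d-1\}$. The key computation: for $x\le y$ in $\Xi_I$,
\[
D_{x,y}[h^I] = \lim_{\delta,\varepsilon\downarrow 0}\Delta_{a_I\vee(x-\delta),\,b_I\wedge(y+\varepsilon)}\Big[\lim_{t\uparrow b_d} h(\cdot,t)\Big],
\]
and I want to pull the limit $t\uparrow b_d$ outside the $(d-1)$-variate difference operator. Because $h$ is bounded and $\nu_h$ finite, the convergence $h(\cdot,t)\to h^I$ as $t\uparrow b_d$ is dominated well enough that the finite linear combination defining $\Delta$ commutes with the limit; so the right-hand side equals $\lim_{t\uparrow b_d}\lim_{\delta,\varepsilon\downarrow 0}\Delta_{a_I\vee(x-\delta),b_I\wedge(y+\varepsilon)}[h(\cdot,t)]$. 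Now each inner expression, by the definition of $\nu_h$ applied to the box with the extra coordinate running over $[a_d,t]$ (or $(a_d,t]$, using right-continuity and groundedness so that the $a_d$-face contributes nothing), equals $\nu_h\big([x,y]_{\text{approx}}\times[a_d,t]\big)$; letting $t\uparrow b_d$ and using continuity of measures from below in the last coordinate gives $\nu_h([x,y]_{\text{approx}}\times\Xi_d)$, which by definition of the marginal measure is $\nu_h^I([x,y]_{\text{approx}})$. Taking the outer limit $\delta,\varepsilon\downarrow 0$ and using continuity of $\nu_h^I$ yields $\nu_h^I([x,y])$, i.e. $h^I$ induces $\nu_h^I$. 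That $h^I$ again lies in $\cF_{mi}^c(\Xi_I)$ follows since $\nu_h^I$ is a genuine (finite) signed measure and $h^I$ inherits boundedness and the boundary-continuity conditions from $h$; iterating down over nested $I$ then gives full membership in $\cF_{mi}^{c,l}(\Xi)\cap\cF_{mi}^{c,u}(\Xi)$ and the marginal identity for all $I$.

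The main obstacle I expect is justifying the interchange of the limit $t\uparrow b_d$ with the $(d-1)$-variate difference operator and, more delicately, with the \emph{outer} iterated limit $\delta_i,\varepsilon_i\downarrow 0$ in the definition of $D_{x,y}$ — the order of these limits matters and a priori need not commute. The fix is to avoid commuting them directly: instead observe that for every fixed approximating box $B$ with $(x,y)\subseteq B\subseteq[x,y]$ one has the exact identity $\lim_{t\uparrow b_d}V_h(B\times[a_d,t]) = \nu_h^I(B)$ (no outer limit needed, pointwise in $B$), because $V_h(B\times[a_d,t]) = \nu_h(B^\circ\text{-type box}\times(a_d,t])$ already by Proposition \ref{lemfmi}\eqref{lemfmi2} applied to the right-continuous version $h_+$ (legitimate by Lemma \ref{lemunimifun}, using the boundary-continuity hypothesis), and then monotone continuity of $\nu_h$ in the last coordinate is unconditional. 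Then taking $\delta,\varepsilon\downarrow 0$ on both sides and invoking continuity of the finite signed measure $\nu_h^I$ closes the argument without ever swapping the two limiting processes. A secondary technical point — that groundedness plus boundary-continuity makes the $x_d=a_d$ face contribute zero, so half-open and closed boxes in the last coordinate give the same value — is exactly Remark \ref{remindmeas}\eqref{eqremindmeas3} and should be cited rather than reproved.
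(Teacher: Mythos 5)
Your proposal is correct and rests on the same mechanism as the paper's proof: pass to the right-continuous version $h_+$ via Lemma \ref{lemunimifun}, use Proposition \ref{lemfmi}\eqref{lemfmi2} together with groundedness to identify values of $h_+$ with $\nu_{h_+}$-measures of half-open boxes, let the dropped coordinates tend to $b$ by continuity of the finite signed measure, and kill the lower marginals by groundedness. The only difference is presentational: the paper evaluates everything on boxes $\bigl(a,x\bigr]$ anchored at $a$ for a general $I$ in one stroke (where $h_+(x)=\nu_{h_+}((a,x])$ exactly, so the limit-interchange issues you carefully circumvent never arise), whereas you verify $D_{x,y}[h^I]=\nu_h^I([x,y])$ on general boxes one dropped coordinate at a time.
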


We make use of grounded functions for a characterization of measure-inducing functions through \(\Delta\)-monotone functions, see Theorem \ref{thecfmicl}. In this respect, define for a function \(f\colon \Xi\to \R\) the function \(F^f\colon \Xi\to \R\) by
\begin{align}\label{defFf}
F^f(x)=f(x)-\sum_{k=0}^{d-1}\sum_{I\subsetneq \{1,\ldots,d\}\atop I=\{i_1,\ldots,i_k\}} (-1)^{d-k-1} f_I(x_{i_1},\ldots,x_{i_k})\,,~~~ x=(x_1,\ldots,x_d)\in \Xi\,,
\end{align}
whenever the lower \(I\)-marginals \(f_I\) exist for all \(I\subsetneq \{1,\ldots,d\}\,.\) Note that neither \(f\) nor its lower marginals are assumed to be measure-inducing.

\begin{lemma}[\(F^f\) is grounded]\label{lemFfgr}~\\
Assume that \(f\colon \Xi\to \R\) satisfies the continuity condition \eqref{contboun1}. Then the function \(F^f\) defined by \eqref{defFf} is grounded.
\end{lemma}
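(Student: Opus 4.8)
The plan is to fix an arbitrary $j\in\{1,\ldots,d\}$, set $J:=\{1,\ldots,d\}\setminus\{j\}$, and prove directly that $\lim_{x_j\downarrow a_j}F^f(x_1,\ldots,x_d)=0$ for every fixed choice of $x_i\in\Xi_i$, $i\ne j$; this is exactly condition \eqref{eqdefgrou}. That $F^f$ also satisfies the boundary condition \eqref{contboun1} — required for ``grounded'' to be meaningful — is routine and I would dispatch it first: $F^f$ is a finite linear combination of $f$ and its lower marginals $f_I$, each of which inherits \eqref{contboun1} on its own domain ($f$ by hypothesis, and $f_I$ because its defining iterated limit exists independently of the arrangement of the limits), so the linear combination does too.

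First I would pass the limit $\lim_{x_j\downarrow a_j}$ through the finite sum defining $F^f$ in \eqref{defFf}. The leading term gives $\lim_{x_j\downarrow a_j}f(x)=f_J(x_J)$ by the very definition of the lower marginal. A summand $f_I$ with $j\notin I$ does not depend on $x_j$ and is unaffected. A summand $f_I$ with $j\in I$ turns into $f_{I\setminus\{j\}}$: since $I\setminus\{j\}\subsetneq\{1,\ldots,d\}$ the marginal $f_{I\setminus\{j\}}$ exists by hypothesis, and, being an arrangement-independent iterated limit over all coordinates outside $I\setminus\{j\}$, it may be computed by first taking the $\{1,\ldots,d\}\setminus I$ limits (which produces $f_I$) and then the remaining $x_j$ limit, i.e. $\lim_{x_j\downarrow a_j}f_I=f_{I\setminus\{j\}}$.

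The second step is the combinatorial cancellation. After the substitution $I\mapsto I'=I\setminus\{j\}$ in the part of the sum with $j\in I$, both resulting families are indexed by subsets of $J$: the $j\notin I$ family runs over all $I\subseteq J$ carrying sign $(-1)^{d-|I|-1}$, while the reindexed $j\in I$ family runs over all proper $I'\subsetneq J$ carrying sign $(-1)^{d-(|I'|+1)-1}=(-1)^{d-|I'|}$. The unique subset of $J$ of cardinality $d-1$ is $J$ itself; it appears only in the first family, contributing $(-1)^{0}f_J=f_J$. Every proper subset $I'\subsetneq J$ appears once in each family, and the two signs $(-1)^{d-|I'|-1}$ and $(-1)^{d-|I'|}$ are opposite, so all of these contributions cancel in pairs. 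Consequently $\lim_{x_j\downarrow a_j}$ of the subtracted sum in \eqref{defFf} is exactly $f_J(x_J)$, whence $\lim_{x_j\downarrow a_j}F^f(x)=f_J(x_J)-f_J(x_J)=0$, as required.

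I expect the only real obstacle to be the sign bookkeeping in the second step: one must check carefully that the ``diagonal'' term $I=J$ (which is precisely $\lim_{x_j\downarrow a_j}$ applied to $f$ itself) is peeled off correctly and has no partner, while every proper subset of $J$ is hit exactly twice with opposite signs. A minor technical point worth isolating as a preliminary observation is the identity $\lim_{x_j\downarrow a_j}f_I=f_{I\setminus\{j\}}$ for $j\in I$, which is harmless under the standing assumption that the marginal limits do not depend on the order in which they are taken. Everything else is immediate once the limit is pushed inside the finite sum.
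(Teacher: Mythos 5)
Your proof is correct and follows essentially the same route as the paper's: push the limit $x_j\downarrow a_j$ through the finite sum in \eqref{defFf}, use $\lim_{x_j\downarrow a_j}f=f_{J}$ and $\lim_{x_j\downarrow a_j}f_I=f_{I\setminus\{j\}}$ (valid by the arrangement-independence built into the definition of the lower marginals), and observe that after reindexing the $j\in I$ terms every proper subset of $J$ occurs twice with opposite signs while the lone term $I=J$ cancels against the limit of the leading term. The paper simply takes $j=1$ without loss of generality and records the same cancellation; your extra remark that $F^f$ inherits \eqref{contboun1} is a harmless addition.
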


\subsection{Characterization of measure-inducing functions}

As one expects, measure-inducing functions and the concept of bounded variation in the sense of the Hardy and Krause are closely related because the integration by parts formula \eqref{genformi1} for measure-inducing functions is related to integration by parts for Stieltjes integrals and because the convergence of the Stieltjes integrals on the compact domain \([0,1]^d\) can be characterized through bounded Hardy-Krause variation, see \cite[Proposition 2]{Zaremba-1968}). Since the domain \(\Xi\) of the functions \(f\) and \(g\) in formula \eqref{genformi1} is not necessarily compact, the functions \(f\) and \(g\) may also have infinite Hardy-Krause variation. In the following, we characterize the classes \(\cF_{mi}^{c,l}(\Xi)\) of measure-inducing functions with compact domain \(\Xi=[0,1]^d\) and with non-compact domain \(\Xi=\R_+^d\) using the concept of Hardy-Krause variation. A characterization for general \(\Xi\) follows similarly with the transformation in Proposition \ref{propmeatraf}.

For the definition of the Hardy-Krause variation, we consider the domain \(\Xi=[a,b]\,.\)
Then, for \(j=1,\ldots,d\,,\) let
\begin{align*}
a_j= x_0^{(j)} < x_1^{(j)} < \cdots < x_{m_s}^{(j)} = b_j
\end{align*}
be a partition of \(\Xi\,.\) For \(I=\{i_1,\ldots,i_k\}\subseteq\{1,\ldots,d\}\,,\) \(I\ne \emptyset\,,\) denote by \(\cP^I\) the partition of \(\Xi_I\) given by
\begin{align}\label{defPk}
\cP^I:=\left\{ [x_{l_1}^{(i_1)}, x_{l_1+1}^{(i_1)} ] \times \cdots \times [x_{l_k}^{(i_k)}, x_{l_k+1}^{(i_k)} ] \,, l_s=0,\ldots,m_s-1 \,, s=1,\ldots,d\right\}\,.
\end{align}
Then for a function \(f\colon \Xi\to \R\,,\) the \emph{Vitali variation} is defined by
\begin{align}\label{defVitvar}
\V(f):= \sup_{\cP^{\{1,\ldots,d\}}} \sum_{[x,y]\in \cP^{\{1,\ldots,d\}}} \left| \Delta_{x,y} [f]\right| \,,
\end{align}
where \(\Delta_{x,y}\) is the difference operator in \eqref{defdvdo}. The \emph{Hardy-Krause variation} anchored at \(a\) is defined by
\begin{align}\label{defHKvar}
\Var(f):=\sum_{k=1}^d \sum_{I \subseteq \{1,\ldots,d\}  \atop {|I|}=k} \sup_{\cP^I} \sum_{[x,y]\in \cP^I} \left| \Delta_{x,y} [f_I]\right| 
\end{align}
where the supremum is taken over all partitions \(\cP^I\) of \(\Xi_I\) into axis-parallel boxes of the type in \eqref{defPk}. Hence, the Hardy-Krause variation anchored at \(a\) is the sum of the Vitali variations of all lower \(I\)-marginal \(f_I\) of \(f\,,\) \(I\subseteq \{1,\ldots,d\}\,,\) \(I\ne \emptyset\,.\) In the literature, the domain \(\Xi\) is often chosen as the unit cube \([0,1]^d\) and the Hardy-Krause variation is usually defined w.r.t. the upper \(I\)-marginals, i.e., anchored at \(b=(1,\ldots,1)\,.\)
For an overview of properties of functions with bounded Hardy-Krause variation, see, e.g., \cite{Basu-2016,Owen-2005}. \\

The following result characterizes the class \(\cF_{mi}^{c,l}([0,1]^d)\) of measure-inducing functions on \([0,1]^d\) for which also the lower or upper \(I\)-marginal functions are measure-inducing. Note that the induced signed measures are all finite because \([0,1]^d\) is compact, see Remark \ref{remindmeas}\eqref{remindmeas1}.

\begin{theorem}[Characterization of {\(\cF_{mi}^{c,l}([0,1]^d)\)}]\label{propbhkv}~\\
Assume that \(f\colon [0,1]^d\to \R\) is right-continuous and fulfils  the continuity conditions \eqref{contboun1} and \eqref{contboun2}. Then the following statements are equivalent:
\begin{enumerate}[(i)]
\item \label{propbhkv1}\(f\) has bounded Hardy-Krause variation,
\item \label{propbhkv2}\(f\in \cF_{mi}^{c,l}([0,1]^d)\,,\)
\item \label{propbhkv3}\(f\in \cF_{mi}^{c,u}([0,1]^d)\,.\)
\end{enumerate}
\end{theorem}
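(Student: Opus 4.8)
The plan is to prove the cyclic chain of implications $\eqref{propbhkv1}\Rightarrow\eqref{propbhkv2}$, $\eqref{propbhkv2}\Rightarrow\eqref{propbhkv1}$, and then the symmetric pair $\eqref{propbhkv1}\Leftrightarrow\eqref{propbhkv3}$, the latter being entirely analogous once the former is established (replace lower $I$-marginals by upper ones and anchor the Hardy--Krause variation at $b=(1,\dots,1)$ instead of $a=(0,\dots,0)$). So the real work is the equivalence of \eqref{propbhkv1} and \eqref{propbhkv2}.

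For $\eqref{propbhkv1}\Rightarrow\eqref{propbhkv2}$, I would first show that a function $g\colon [0,1]^k\to\R$ of bounded Vitali variation is measure inducing. The box functional $[x,y]\mapsto \Delta_{x,y}[g]$ is finitely additive on the semiring of half-open subboxes of $[0,1]^k$; bounded Vitali variation means it has bounded total variation as an additive set function there, so by Jordan-type decomposition it splits as a difference of two bounded, finitely additive, non-negative box functionals. Each of these, being non-negative and finitely additive with the monotone-limit behaviour forced by continuity of $g$ along each coordinate (which the assumed boundary continuity plus the componentwise-limit existence in $\cF_{mi}$ supplies on the interior), extends by Carathéodory/Hahn--Kolmogorov to a finite Borel measure; their difference is the desired signed measure $\nu_g$, and one checks $\nu_g([x,y])=D_{x,y}[g]$ by taking the outer limits as in \eqref{defmind0}. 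Applying this to $g=f$ and to $g=f_I$ for every nonempty $I\subsetneq\{1,\dots,d\}$ — each $f_I$ has bounded Vitali variation precisely because $f$ has bounded Hardy--Krause variation, by definition \eqref{defHKvar} — and noting the boundary-continuity of each $f_I$ is inherited, gives $f\in\cF_{mi}^{c,l}([0,1]^d)$. One should also verify $f_I\in\cF_{mi}^c$ rather than merely $\cF_{mi}$, i.e.\ the boundary-continuity clauses \eqref{contboun1}, \eqref{contboun2} for $f_I$, which follows from those for $f$ together with dominated/monotone passage of limits.

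For the converse $\eqref{propbhkv2}\Rightarrow\eqref{propbhkv1}$, suppose $f\in\cF_{mi}^{c,l}([0,1]^d)$. Then each lower marginal $f_I$ is measure inducing on the compact cube $\Xi_I=[0,1]^{|I|}$, so by Remark \ref{remindmeas}\eqref{remindmeas1} the induced signed measure $\nu_{f_I}$ is finite, hence has finite total variation $|\nu_{f_I}|(\Xi_I)<\infty$. Using Proposition \ref{lemfmi}\eqref{lemfmi1} — after passing to the left-continuous version $f_-$, which by Lemma \ref{lemunimifun} induces the same measure and has the same Vitali variation (a partition sum for $\V$ on $[0,1]^k$ can be approximated by one evaluated at continuity points, using monotone limits) — we get $\Delta_{x,y}[(f_I)_-]=\nu_{f_I}([x,y))$ for all subboxes, so for any partition $\cP^I$,
\begin{align*}
\sum_{[x,y]\in\cP^I}\bigl|\Delta_{x,y}[(f_I)_-]\bigr|=\sum_{[x,y]\in\cP^I}\bigl|\nu_{f_I}([x,y))\bigr|\le |\nu_{f_I}|(\Xi_I)<\infty,
\end{align*}
since the half-open boxes $[x,y)$ of a partition are disjoint. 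Taking the supremum over $\cP^I$ bounds $\V((f_I)_-)=\V(f_I)$, and summing over all nonempty $I\subseteq\{1,\dots,d\}$ bounds $\Var(f)$ by $\sum_I|\nu_{f_I}|(\Xi_I)<\infty$.

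The main obstacle I anticipate is the first half of $\eqref{propbhkv1}\Rightarrow\eqref{propbhkv2}$: turning the finitely additive, bounded-variation box functional $\Delta_{\cdot,\cdot}[g]$ into a genuine $\sigma$-additive signed measure. Finite additivity on the semiring of half-open boxes is elementary, but $\sigma$-additivity (equivalently, continuity from above at $\emptyset$ along shrinking boxes) is exactly where the componentwise one-sided limits of $g$ and the boundary-continuity hypotheses must be invoked; one cannot merely cite Carathéodory without first establishing this continuity of the pre-measure. I would handle it by reducing to the non-negative case via the Vitali--Jordan splitting and then using that a bounded, finitely additive, monotone box functional on $[0,1]^k$ whose generating function has the appropriate one-sided limits is automatically $\sigma$-additive — the standard one-dimensional Lebesgue--Stieltjes argument, applied coordinate by coordinate. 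I expect this to be the part that is "transferred to the appendix."
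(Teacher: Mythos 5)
Your overall architecture parallels the paper's: the paper also reduces \eqref{propbhkv1}\(\Rightarrow\)\eqref{propbhkv2} to the construction of a signed measure from a function of bounded variation (it cites \cite[Theorem 3a]{Aistleitner-2015}, applied to \(F^{(f_+)_I}\), and transfers back to \(f_I\) via Lemma \ref{lemunimifun}), obtains \eqref{propbhkv3} from the anchor-swap \cite[Lemma 2]{Aistleitner-2015}, and gets the converse from \cite[Theorem 3b]{Aistleitner-2015}. You instead re-derive the extension step by hand (Jordan splitting of the box functional plus Carath\'eodory), which is legitimate in outline; just note that the pre-measure \(\Delta_{\cdot,\cdot}[g]\) of a merely monotone \(g\) is \emph{not} automatically \(\sigma\)-additive on half-open boxes (one must first pass to the one-sided continuous version), and that the multivariate Jordan splitting with additive Vitali variation is precisely the nontrivial content of the cited theorem, not an elementary fact.

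The genuine gap is in your converse \eqref{propbhkv2}\(\Rightarrow\)\eqref{propbhkv1}: the asserted identity \(\V((f_I)_-)=\V(f_I)\) is false. Approximating partition points from the left at continuity points only gives \(\V((f_I)_-)\le \V(f_I)\); the reverse inequality fails because \(f\) may carry spikes that are invisible to \(f_-\), \(f_+\), and hence to \(D_{x,y}[f]\) and \(\nu_f\), yet contribute to the variation. Concretely, for \(d=1\) let \((q_n)_n\) enumerate the rationals in \((0,1)\) and set \(f(q_n)=1/n\) and \(f=0\) otherwise: all one-sided limits exist and vanish, the boundary conditions \eqref{contboun1} and \eqref{contboun2} hold, and \(D_{x,y}[f]=f_+(y)-f_-(x)=0\), so \(f\) induces the null measure and lies in \(\cF_{mi}^{c,l}([0,1])\), while \(\V(f)\ge 2\sum_{n=1}^{N}1/n\to\infty\). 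Thus your chain of inequalities bounds only \(\Var(f_-)\) (equivalently \(\Var(f_+)\)), not \(\Var(f)\). This is in fact the delicate point of the whole equivalence rather than an artifact of your route: the paper's appeal to \cite[Theorem 3b]{Aistleitner-2015} likewise only controls the variation of \(F^{(f_+)_I}\), i.e., of the right-continuous version. To close the argument you must either restrict to componentwise one-sided continuous \(f\) or state \eqref{propbhkv1} for \(f_-\) (or \(f_+\)); as written, the step would fail.
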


In the sequel, we analyse the class of measure-inducing functions on \(\R_+^d\,.\)
Since the signed measure induced by \(f\in \cF_{mi}^{c,l}(\R_+^d)\) is not necessarily finite and \(f\) may have infinite Hardy-Krause variation, a characterization of this class is more challenging.

As we will see in Theorem \ref{thecfmicl}, every right-continuous function in \(\cF_{mi}^{c,l}(\R_+^d)\) can be constructed by a linear combination of \(\Delta\)-monotone functions which are defined as follows. Recall that \(\Delta_\varepsilon^i\) is the difference operator defined by \eqref{defdopfh}.

\begin{definition}[\(d\)-increasing, \(\Delta\)-monotone, \(\Delta\)-antitone function]\label{defdelant}~\\
Let \(f\colon \Xi\to \R\) be a function. Then, \(f\) is said to be
\begin{enumerate}[(i)]
\item \emph{\(k\)-increasing} for \(k\in \{1,\ldots,d\}\,,\) if for all \(I=(i_1,\ldots,i_k)\subseteq \{1,\ldots,d\}\,,\)
\begin{align*}
\Delta_{\varepsilon_1}^{i_1}\cdots \Delta_{\varepsilon_k}^{i_k} f(x)\geq 0
\end{align*}
for all \(x=(x_1,\ldots,x_d)\in \Xi\) and \(\varepsilon_1,\ldots,\varepsilon_k>0\) such that \(x_{i_j}+\varepsilon_j\in \Xi_j\) for all \(j\in \{1,\ldots,k\}\,,\)
\item \emph{\(k\)-decreasing} if \(-f\) is \(k\)-increasing,
\item \emph{\(\Delta\)-monotone} if \(f\) is \(k\)-increasing for all \(k\in \{1,\ldots,d\}\,,\)
\item \emph{\(\Delta\)-antitone} if \((-1)^k f\) is \(k\)-increasing for all \(k\in \{1,\ldots,d\}\,,\)
\item \emph{supermodular} if \(\Delta_{\varepsilon}^i \Delta_{\varepsilon'}^jf(x) \geq 0\) for all \(x\in \Xi\,,\) \(\varepsilon,\varepsilon'>0\) and \(1\leq i < j \leq d\,,\)
\item \emph{directionally convex} if \(\Delta_{\varepsilon}^i \Delta_{\varepsilon'}^jf(x) \geq 0\) for all \(x\in \Xi\,,\) \(\varepsilon,\varepsilon'>0\) and \(1\leq i \leq j \leq d\,.\)
\end{enumerate}
\end{definition}

Note that, by definition, a \(\Delta\)-monotone/-antitone function is componentwise increasing/decreasing and, thus, its componentwise left- and right-hand limits exist.  Further, any \(\Delta\)-antitone, any \(\Delta\)-monotone, and any directionally convex function is supermodular.\\

%
%


The following result characterizes measure-inducing functions on \(\R_+^d\) for which the marginal functions are not necessarily measure-inducing. Since at least one of the positive and negative part of a signed measure is finite, at least one of the \(\Delta\)-monotone and hence non-decreasing functions \(g\) and \(h\) in \eqref{eqpropcharmif2} has to be bounded. The transformation \(F^f\) is defined by \eqref{defFf} and \(\Var\) denotes the Hardy-Krause variation defined by \eqref{defHKvar}.


\begin{proposition}[Characterization of measure-inducing functions on \(\R_+^d\)]\label{propcharmif}~\\
For a right-continuous function \(f\colon \R_+^d\to \R\,,\) 
the following statements are equivalent:
\begin{enumerate}[(i)]
\item \label{propcharmif1} \(f\) induces a signed measure by \eqref{defmind}.
\item \label{propcharmif2} \(F_f\) generates a signed measure \(\nu\) on \(\R_+^d\) by \(\nu([0,x])=F_f(x)\) for all \(x\in \R_+^d\,.\)
\item \label{propcharmif3} There exist grounded, right-continuous, and \(\Delta\)-monotone functions \(g,h\colon \R_+^d \to \R\,,\) at least one of which is bounded, such that
\begin{align}
\label{eqpropcharmif1} g(0) &=h(0)=0\,,&&\\
\label{eqpropcharmif2} F^f(x)&=g(x)-h(x)~~~&&\text{for all } x \in \R_+^d\,,\\
\label{eqpropcharmif3} \Var(F^f|_{[0,x]})&=\Var(g|_{[0,x]})+\Var(h|_{[0,x]})<\infty~&&\text{for all } x\in \R_+^d\,,\\
\label{eqpropcharmif} \Delta_{x,y}[f]&= \Delta_{x,y}[g]-\Delta_{x,y}[h]~~~&&\text{for all }x,y\in \R_+^d~\text{with } x\leq y\,.
\end{align}
\item \label{propcharmif5} There exist right-continuous and \(d\)-increasing functions \(g,h\colon \R_+^d\to \R\,,\) at least one of which is bounded, such that \eqref{eqpropcharmif} is fulfilled.
\end{enumerate}
Further, the functions \(g\) and \(h\) in \eqref{propcharmif3} are uniquely determined.
\end{proposition}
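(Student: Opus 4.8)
The plan is to establish the cycle of implications $\eqref{propcharmif1}\Leftrightarrow\eqref{propcharmif2}$, $\eqref{propcharmif2}\Rightarrow\eqref{propcharmif3}$, $\eqref{propcharmif3}\Rightarrow\eqref{propcharmif5}$, and $\eqref{propcharmif5}\Rightarrow\eqref{propcharmif1}$, and to prove uniqueness separately at the end. For $\eqref{propcharmif1}\Leftrightarrow\eqref{propcharmif2}$ the key observation is the inclusion--exclusion identity
\[
\Delta_{0,x}[f]=\sum_{k=0}^{d}\sum_{\substack{I\subseteq\{1,\ldots,d\}\\ I=\{i_1,\ldots,i_k\}}}(-1)^{d-k}f_I(x_{i_1},\ldots,x_{i_k}),
\]
which rewrites as $\Delta_{0,x}[f]=F^f(x)$ by the very definition \eqref{defFf} of $F^f$; moreover, since $f$ is right-continuous, Proposition~\ref{lemfmi}\eqref{lemfmi2} gives $\nu_f((0,x])=\Delta_{0,x}[f]$, and because $f$ is grounded along coordinate hyperplanes through $0$ in the sense that the terms with $0\in\{x_{i}:i\notin I\}$ drop out, the boxes $(0,x]$ and $[0,x]$ carry the same $\nu_f$-mass (no mass sits on $\{x_i=0\}$ by Remark~\ref{remindmeas}\eqref{eqremindmeas3} applied on $\R_+^d$). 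Hence $f$ induces $\nu$ iff $\nu([0,x])=F^f(x)$ for all $x$, which is exactly \eqref{propcharmif2}; one should be careful to note that $F^f$ is grounded by Lemma~\ref{lemFfgr}, so the candidate set function $[0,x]\mapsto F^f(x)$ is at least consistent with being generated by a signed measure.

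For $\eqref{propcharmif2}\Rightarrow\eqref{propcharmif3}$, assume $F^f$ generates a signed measure $\nu$ on $\R_+^d$ with $\nu([0,x])=F^f(x)$. Take the Jordan decomposition $\nu=\nu^+-\nu^-$, at least one part being finite, and set $g(x):=\nu^+([0,x])$ and $h(x):=\nu^-([0,x])$. These are grounded (since $\nu^\pm$ put no mass on coordinate hyperplanes through $0$, inherited from $F^f$ being grounded via Lemma~\ref{lemFfgr}), right-continuous (continuity of measures from above on the decreasing boxes $[0,x+\tfrac1n\mathbf 1]$), and $\Delta$-monotone because for any $I$ the increment $\Delta_{\varepsilon}^{i_1}\cdots\Delta_{\varepsilon}^{i_k}$ of $x\mapsto\mu([0,x])$ equals the $\mu$-measure of a box (with some faces removed) in the variables indexed by $I$ with the remaining variables fixed at their current values, hence is nonnegative for a genuine measure $\mu=\nu^\pm$. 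One of $g,h$ is bounded because one of $\nu^\pm$ is finite. Equations \eqref{eqpropcharmif1} and \eqref{eqpropcharmif2} are then immediate; \eqref{eqpropcharmif3} follows because on each compact box $[0,x]$ the Hardy--Krause variation of $y\mapsto\mu([0,y])$ equals the total mass $\mu([0,x])$ (the Vitali variation of each $I$-marginal telescopes to the $\nu^\pm$-mass of the corresponding face and these add up), and the minimality of the Jordan decomposition gives additivity $\Var(F^f|_{[0,x]})=\Var(g|_{[0,x]})+\Var(h|_{[0,x]})$; finally \eqref{eqpropcharmif} follows from \eqref{eqpropcharmif2} together with the fact, provable by a further inclusion--exclusion over the marginals, that $\Delta_{x,y}[f]=\Delta_{x,y}[F^f]$ for all $x\le y$ (the lower-marginal correction terms in \eqref{defFf} are constant in at least one coordinate of any box $[x,y]$, hence killed by $\Delta_{x,y}$, cf.\ Remark~\ref{remindmeas}\eqref{eqremindmeas2}), and the same applies to $g,h$ in place of their "$[0,\cdot]$-primitives''.

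The implication $\eqref{propcharmif3}\Rightarrow\eqref{propcharmif5}$ is trivial once one recalls that a $\Delta$-monotone function is in particular $d$-increasing, so the pair from \eqref{propcharmif3} already witnesses \eqref{propcharmif5}. For $\eqref{propcharmif5}\Rightarrow\eqref{propcharmif1}$: a right-continuous $d$-increasing function $g$ on $\R_+^d$ generates a (non-negative, possibly non-finite but $\sigma$-finite) measure $\mu_g$ on $\R_+^d$ via the Carathéodory extension from the premeasure $(x,y]\mapsto\Delta_{x,y}[g]$ on half-open boxes (right-continuity gives countable additivity on the semiring, $d$-increasingness gives nonnegativity); similarly $g$ induces $\mu_g$ in the sense of \eqref{defmind} since $D_{x,y}[g]=\mu_g([x,y])$ by continuity of $\mu_g$ from above, using right-continuity to pass from the open-box approximation to the closed box. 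Then $\nu:=\mu_g-\mu_h$ is a signed measure (boundedness of one of $g,h$ makes one of $\mu_g,\mu_h$ finite, so the difference is well defined), and \eqref{eqpropcharmif} together with the additivity of $D_{x,y}$ over the difference gives $D_{x,y}[f]=D_{x,y}[g]-D_{x,y}[h]=\mu_g([x,y])-\mu_h([x,y])=\nu([x,y])$, i.e.\ $f$ induces $\nu$. For uniqueness of $g,h$ in \eqref{propcharmif3}: if $(g,h)$ and $(\tilde g,\tilde h)$ both satisfy \eqref{eqpropcharmif1}--\eqref{eqpropcharmif3}, then $g-\tilde g=h-\tilde h$ as functions vanishing at $0$, while \eqref{eqpropcharmif3} forces $\Var(g|_{[0,x]})+\Var(h|_{[0,x]})=\Var(\tilde g|_{[0,x]})+\Var(\tilde h|_{[0,x]})$; combined with $\Var$ being additive along the minimal (Jordan-type) decomposition and $g,h,\tilde g,\tilde h$ all being $\Delta$-monotone, a standard Hahn-decomposition argument shows $g=\tilde g$ and $h=\tilde h$ on each $[0,x]$, hence everywhere.

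I expect the main obstacle to be the passage between the various notions of "inducing'' a measure---$D_{x,y}[\cdot]$ on closed boxes, $\Delta_{x,y}[\cdot]$ on half-open boxes for right-continuous functions, and $F^f$ as a primitive on lower sets $[0,x]$---together with the care needed on the boundary hyperplanes $\{x_i=0\}$ of $\R_+^d$: one must repeatedly invoke groundedness (of $f$'s marginals, or of $F^f$ via Lemma~\ref{lemFfgr}) and the continuity statements of Proposition~\ref{lemfmi} to ensure that closed, half-open, and lower-set descriptions of the same measure genuinely agree, and that the Jordan decomposition of the $F^f$-generated measure is compatible with the Vitali-variation additivity \eqref{eqpropcharmif3}. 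The non-finiteness of $\nu$ on the unbounded domain $\R_+^d$ also requires keeping track of which of $\nu^+,\nu^-$ is finite throughout, rather than blithely subtracting measures.
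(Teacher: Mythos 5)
Your proposal is correct and follows essentially the same route as the paper: the identity \(\Delta_{0,x}[f]=F^f(x)\) links \eqref{propcharmif1} and \eqref{propcharmif2}, the Jordan decomposition of the generated signed measure produces the grounded \(\Delta\)-monotone pair \((g,h)\) for \eqref{propcharmif3}, and the induced measure is reconstructed from the \(d\)-increasing parts for \eqref{propcharmif5}\(\Rightarrow\)\eqref{propcharmif1}. The only real difference is organisational: the paper isolates your second step into Lemma \ref{prophjd}, whose proof delegates precisely the points you assert informally (the variation additivity \eqref{eqpropcharmif3}, i.e.\ the identification of the Vitali variation with the total-variation measure over box partitions, and the uniqueness of \((g,h)\)) to \cite[Theorems 3a and 3b]{Aistleitner-2015}, so those two assertions are where you would need to supply or cite an actual argument.
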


In the integration by parts formula \eqref{genformi1}, the function \(f\) is assumed to be from the class \(\cF_{mi}^{c,l}(\Xi)\).
The following result shows that each such \(f\) is essentially a linear combination of \(\Delta\)-monotone functions.  Similar to Proposition \ref{propcharmif}, for all \(I\subseteq \{1,\ldots,d\},\) \(I\ne \emptyset,\) at least one of the functions \(g_I\) and \(h_I\) in \eqref{eqthecfmicl22} is bounded.

\begin{theorem}[Characterization of the class \(\cF_{mi}^{c,l}(\R_+^d)\)]\label{thecfmicl}~\\
For a  function \(f\colon \R_+^d \to \R\,,\) the following statements are equivalent:
\begin{enumerate}[(i)]
\item \label{thecfmicl1} \(f\in \cF_{mi}^{c,l}(\R_+^d)\) is right-continuous,
\item \label{thecfmicl2} For all \(I\subseteq\{1,\ldots,d\}\,,\) \(I\ne \emptyset\,,\) there exist  right-continuous, \(\Delta\)-monotone, and grounded functions \(g_{(I)},h_{(I)}\colon \R_+^{|I|}\to \R\,,\) at least one of which is bounded, such that 
\begin{align}
g_{(I)}(0)&=h_{(I)}(0)=0\,,\\
\label{eqthecfmicl22} F^{f_{(I)}}(y)&= g_{(I)}(y)-h_{(I)}(y)\,, ~~~\text{for all } y\in \R_+^{|I|}\,,\\
\Var(F^{f_{(I)}}|_{[0,y]})&=\Var({g_{(I)}}|_{[0,y]})+\Var({h_{(I)}}|_{[0,y]})<\infty~~~\text{for all } y\in \R_+^{|I|}\,,\\
\label{eqthecfmicl2}f(x) &= f(0)+ \sum_{k=1}^d \sum_{I\subseteq \{1,\ldots,d\}\atop I=\{i_1,\ldots,i_k\}} \left[g_{(I)}(x_{i_1},\ldots,x_{i_k}) - h_{(I)}(x_{i_1},\ldots,x_{i_k})\right] 
\end{align}
for all \(x=(x_1,\ldots,x_d)\in \R_+^d\,.\)
\end{enumerate} 
The functions \(g_{(I)}\) and \(h_{(I)}\,,\) \(I\subseteq\{1,\ldots,d\}\,,\) \(I\ne \emptyset\,,\) in \eqref{thecfmicl2} are uniquely determined.
\end{theorem}

\begin{remark}
\begin{enumerate}[(a)]
\item If \(f\colon \R_+^k\to \R\) is grounded, \(k\in \N\,,\) then \(f\) is \(\Delta\)-monotone if and only if it is \(k\)-increasing. Hence, for all \(I\subseteq \{1,\ldots,d\}\,,\) the uniquely determined right-continuous functions \(g_{(I)}\) and \(h_{(I)}\) in Theorem \ref{thecfmicl}\eqref{thecfmicl2} are measure-generating functions, i.e., there exist measures \(\mu_I\) and \(\eta_I\) on \(\cB(\R^{|I|}_+)\) such that
\begin{align}\label{defmgfun}
\mu_I([0,x])&=g_{(I)}(x)\,, && \eta_I([0,x])=h_{(I)}(x)\,,
\end{align}
\(x\in \R_+^{|I|}\,.\)
\item 
Since \(\Delta\)-monotone functions are increasing, their componentwise right- and left-hand limits exist. Hence, dispensing with right-continuity, Theorem \ref{thecfmicl} still holds true.
\end{enumerate}
\end{remark}

\subsection{Survival functions}


As the following example shows, care must be taken in the integration by parts formula \eqref{genformi1}, where the lower \(I\)-marginal of the survival function of \(g\) has to be determined and not the survival function of the lower \(I\)-marginal.


\begin{example}[\(\overline{f^I} \ne \overline{f}^I\,,\) \(\overline{f_I}\ne \overline{f}_I\)]\label{exuimsf}
Let \(f\colon [0,1]^2\to [0,1]\) be a bivariate copula, i.e., \(f\) is a bivariate distribution function on \([0,1]^2\) that is grounded and has uniform univariate marginals, so \(f(u,0)=0\,,\) \(f(0,v)=0\,,\) \(f(u,1)=u\) and \(f(1,v)=v\) for all \(u,v\in [0,1]\,.\) It holds that \(\overline{f}(u,v)=f(u,v)-u-v+1\,.\)
\begin{enumerate}[(a)]
\item  For \(I=\{1\}\) it follows that \(f^{\{1\}}(u)=f(u,1)=u\) using the uniform marginal property. This yields \(\overline{f^{\{1\}}}(u)=1-u\ne 0 =\overline{f}(u,1)= \overline{f}^{\{1\}}(u)\) for \(u\in [0,1)\,.\)
\item For \(I=\{1\}\,,\) it follows that \(f_{\{1\}}(u)=f(u,0)=0\) using that \(f\) is grounded. This implies for \(u\in [0,1)\) that \(\overline{f_{\{1\}}}(u)=0\ne 1-u = \overline{f}(u,0) = \overline{f}_{\{1\}}(u)\,.\) 
\end{enumerate}
\end{example}

The survival function of a grounded function has the following property, which is applied in the proof of Theorem \ref{proppif}.

\begin{lemma}\label{lemsurvmarg}
Let \(f\colon \Xi\to \R\) be a function that is grounded and fulfils continuity condition \eqref{contboun1}. Then for \(I\subseteq \{1,\ldots,d\}\,,\) \(I\ne \emptyset\,,\) it holds that
\begin{align*}
\overline{f^I}= \overline{f}_I\,,
\end{align*}
i.e., the survival function of the upper \(I\)-marginal of \(f\) coincides with the lower \(I\)-marginal of the survival function of \(f\,.\)
\end{lemma}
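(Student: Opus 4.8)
The plan is to expand both sides using the defining formula \eqref{defsurvfun} and to exploit the groundedness of $f$ to collapse the extra terms. Fix a nonempty $I\subseteq\{1,\dots,d\}$ and write $J=I$ for the index set appearing inside the survival-function sums, so that for the upper $I$-marginal $f^I\colon \Xi_I\to\R$ we have, by \eqref{defsurvfun} applied on the lower-dimensional domain $\Xi_I$,
\[
\overline{f^I}(x_I)=\sum_{J\subseteq I}(-1)^{|J|}(f^I)^J(x_J),
\]
where $x_I$ denotes the tuple of coordinates indexed by $I$ and $(f^I)^J$ is the upper $J$-marginal of $f^I$. The first step is the elementary observation that taking an upper marginal is "transitive": for $J\subseteq I$, sending the coordinates in $\{1,\dots,d\}\setminus I$ to their upper bounds $b$ and then the coordinates in $I\setminus J$ to their upper bounds is the same iterated limit as sending all coordinates in $\{1,\dots,d\}\setminus J$ to their upper bounds, i.e. $(f^I)^J=f^J$ (here one uses that the iterated limits exist and are arrangement-independent by hypothesis, so the order of passing to the limits does not matter). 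Hence $\overline{f^I}(x_I)=\sum_{J\subseteq I}(-1)^{|J|}f^J(x_J)$.

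The second step is to compute the right-hand side $\overline{f}_I$. By definition $\overline{f}_I$ is the lower $I$-marginal of $\overline{f}$, obtained by sending the coordinates in $\{1,\dots,d\}\setminus I$ down to their lower bounds $a$. Applying this limit termwise to \eqref{defsurvfun}, a term $(-1)^{|K|}f^K(x_K)$ with $K\subseteq\{1,\dots,d\}$ becomes $(-1)^{|K|}$ times $\lim_{x_j\downarrow a_j,\,j\notin I}f^K(x_K)$. Split $K=(K\cap I)\,\dot\cup\,(K\setminus I)$: the coordinates of $f^K$ lying in $K\setminus I\subseteq\{1,\dots,d\}\setminus I$ are precisely those being driven to $a$. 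If $K\setminus I\ne\emptyset$, then $\lim_{x_j\downarrow a_j,\,j\in K\setminus I}f^K(x_K)$ is a lower marginal of the grounded function $f$ (more precisely, $f^K$ is itself obtained from $f$ by upper limits in coordinates outside $K$, and the groundedness hypothesis \eqref{eqdefgrou}, equivalently $f_L\equiv 0$ for $L\subsetneq\{1,\dots,d\}$, passes to $f^K$ because the upper-limit and lower-limit operations act on disjoint coordinate blocks and commute); hence every such term vanishes. The surviving terms are exactly those with $K\setminus I=\emptyset$, i.e. $K\subseteq I$, and for these $K$ no coordinate of $f^K$ is being moved, so the limit is just $f^K(x_K)$ itself. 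Therefore $\overline{f}_I(x_I)=\sum_{K\subseteq I}(-1)^{|K|}f^K(x_K)$, which coincides with the expression for $\overline{f^I}(x_I)$ obtained in the first step, proving the claim.

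The main obstacle I anticipate is not any single deep point but rather the careful bookkeeping needed to justify the two interchanges of limits: first, that $(f^I)^J=f^J$, and second, that the lower-limit operator defining $\overline{f}_I$ may be pushed through the finite sum \eqref{defsurvfun} and through each upper-marginal operator, so that groundedness can be applied to kill the terms with $K\not\subseteq I$. Both rest on the standing assumption that all the iterated limits in question exist and are independent of their arrangement (which is exactly the hypothesis "$\overline f$ is defined", together with \eqref{contboun1} ensuring the lower marginals behave well), plus the fact that upper limits in one block of coordinates and lower limits in a disjoint block commute; once these are stated cleanly the vanishing of the extra terms is immediate from \eqref{eqdefgrou}. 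I would also remark in passing that the special cases $I=\{1,\dots,d\}$ (where both sides equal $\overline f$) and the groundedness-induced identity $f_L\equiv 0$ for proper $L$ are useful sanity checks on the index manipulations.
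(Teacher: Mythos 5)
Your proposal is correct and follows essentially the same route as the paper's proof: expand \(\overline{f}\) via \eqref{defsurvfun}, split each index set into its parts inside and outside \(I\), use groundedness to annihilate every term retaining a coordinate outside \(I\), and identify the surviving sum \(\sum_{K\subseteq I}(-1)^{|K|}f^K\) with \(\overline{f^I}\) via the transitivity \((f^I)^K=f^K\). Your explicit attention to the commutation of the upper- and lower-limit blocks is a point the paper glosses over (it writes \(f^I(x_I)=f(x_I,b_{I^c})\) as if \(b\) were attainable), but it does not change the substance of the argument.
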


\subsection{Examples}

In this section, we give various examples of functions that induce or do not induce a signed measure by \eqref{defmind}. First, we consider functions defined on \([0,1]^d\,.\) Then, we address the case of the non-compact domain \(\R_+^d\,.\) Note that a measure-inducing function on \([0,1]^d\) induces by \eqref{defmind} a finite signed measure while a measure-inducing function on \(\R_+^d\) may induce a non-finite signed measure, see Remark \ref{remindmeas}\eqref{remindmeas1}. For a general Cartesian product \(\Xi\) of intervals, similar results follow by a transformation of the domain due to Proposition \ref{propmeatraf}.

\subsubsection{Functions with domain \([0,1]^d\)}

Denote by \(\cC^n(\Xi)\) the class of functions on \(\Xi\) for which the (mixed) partial derivatives of order \(k\) exist for all \(k\leq n\,.\)
As a consequence of Theorem \ref{propbhkv}, the following result holds true.

\begin{corollary}\label{cordom}
If \(f\in \cC^d([0,1]^d)\,,\) then \(f\in \cF_{mi}^{c,l}([0,1]^d)\cap \cF_{mi}^{c,u}([0,1]^d)\,.\)
\end{corollary}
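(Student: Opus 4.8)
The plan is to verify that a $\cC^d$ function $f$ on $[0,1]^d$ has bounded Hardy--Krause variation and then invoke Theorem~\ref{propbhkv}. First I would observe that $f$ trivially satisfies the boundary continuity conditions \eqref{contboun1} and \eqref{contboun2}, since $f$ is continuous on all of $[0,1]^d$. The key point is therefore the bounded Hardy--Krause variation. For this, recall from \eqref{defHKvar} that $\Var(f)$ is the sum over all nonempty $I\subseteq\{1,\ldots,d\}$ of the Vitali variation $\V(f_I)$ of the lower $I$-marginal $f_I$. Since $f\in\cC^d([0,1]^d)$, each lower marginal $f_I$ (obtained by fixing the coordinates outside $I$ at $0$) lies in $\cC^{|I|}([0,1]^{|I|})$, so it suffices to show that a function $g\in\cC^k([0,1]^k)$ has finite Vitali variation on $[0,1]^k$.

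For the Vitali variation of $g\in\cC^k([0,1]^k)$, the standard argument is that the mixed partial derivative $\partial^k g/\partial x_1\cdots\partial x_k$ exists and is continuous, hence bounded, on the compact cube $[0,1]^k$; say $\bigl|\partial^k g/\partial x_1\cdots\partial x_k\bigr|\le C$. By the mean value theorem applied iteratively in each coordinate, for any box $[x,y]$ one has $\bigl|\Delta_{x,y}[g]\bigr| \le C\prod_{j=1}^k (y_j-x_j)$, the product of the edge lengths, i.e. $C$ times the Lebesgue volume of $[x,y]$. Summing over the boxes of any partition $\cP^{\{1,\ldots,k\}}$ gives $\sum_{[x,y]\in\cP}\bigl|\Delta_{x,y}[g]\bigr|\le C\cdot\lebesgue([0,1]^k)=C$, uniformly in the partition, so $\V(g)\le C<\infty$. (Alternatively, one writes $\Delta_{x,y}[g]=\int_{[x,y]}\partial^k g/\partial x_1\cdots\partial x_k\,\de\lebesgue$ and bounds the integral, which makes the telescoping explicit.) Applying this to each $f_I$ and summing over the finitely many nonempty $I$ yields $\Var(f)<\infty$.

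Having established that $f$ has bounded Hardy--Krause variation and satisfies \eqref{contboun1}--\eqref{contboun2}, Theorem~\ref{propbhkv} gives the equivalence with $f\in\cF_{mi}^{c,l}([0,1]^d)$ and with $f\in\cF_{mi}^{c,u}([0,1]^d)$, hence $f$ lies in the intersection, as claimed. The only mild subtlety -- and the one step worth spelling out carefully -- is the passage from the mixed partial derivative bound to the uniform bound on $\sum\bigl|\Delta_{x,y}[g]\bigr|$ via the iterated mean value theorem (or the integral representation of the difference operator); everything else is a direct citation of Theorem~\ref{propbhkv}.
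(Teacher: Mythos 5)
Your proposal is correct and follows essentially the same route as the paper: bound the Vitali variation of each lower marginal \(f_I\) using the continuity (hence boundedness/integrability) of the mixed partial derivative on the compact cube, conclude \(\Var(f)<\infty\), and invoke Theorem \ref{propbhkv}. The only difference is that you derive the bound \(\V(f_I)\le\int_{[0,1]^{|I|}}|\partial^{|I|}f_I/\partial x_{1}\cdots\partial x_{|I|}|\de\lambda\) from first principles via the integral representation of \(\Delta_{x,y}[f_I]\), whereas the paper simply cites \cite[Proposition 13]{Owen-2005} for it.
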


\begin{example}[Copulas, quasi-copulas, and semi-copulas]\label{exacopetc}~
\begin{enumerate}[(a)]
\item \label{exacopetc1}
Since \(d\)-variate copulas are \(d\)-increasing, every copula \(C\colon [0,1]^d\to [0,1]\) induces a probability measure \(\nu\) in the sense of \eqref{defmind}.  Since copulas are grounded on \([0,1]^d\), it holds in particular that \(\nu([0,u])=C(u)\) for all \(u\in [0,1]^d\).
 Similarly, by Sklar's Theorem, every \(d\)-dimensional distribution function \(F\colon \R^d\to [0,1]\) induces in the sense of \eqref{defmind} a probability measure \(\mu\) on \(\R^d\) through \(\mu((-\infty,x])=F(x)\) for \(x\in \R^d\,.\)  Note that distribution functions and thus copulas are \(\Delta\)-monotone functions.
\item For \(d\geq 2\,,\) a quasi-copulas is in general not measure-inducing, see \cite[Theorem 4.1]{Fernandez-2011a} for an example for \(d=2\,.\) As a three-dimensional example, the lower Fr\'{e}chet bound \(W^3\colon [0,1]^3 \to [0,1]\,,\) \((x_1,x_2,x_3)\mapsto \max\left\{\sum_{i=1}^3 x_i -2,0\right\}\) is a proper quasi-copula and does not induce a signed measure, see \cite[Theorem 2.4]{Nelsen-2010}. Hence, \(W^3\) is not the difference of two grounded \(\Delta\)-monotone functions, where one is bounded; cf. Theorems \ref{propbhkv} and \ref{thecfmicl}.
\end{enumerate}
\end{example}

The following example shows that a step function on \([0,1]^2\) with a step on the main diagonal is measure-inducing. However, the reflected function with a step on the anti-diagonal of \([0,1]^2\) does not induce a signed measure in the sense of Definition \ref{defmifun}.

\begin{example}[Mass on main diagonal/anti-diagonal]~\\
Consider the functions \(f,g\colon [0,1]^2 \to [0,1]\) defined by \(f(x,y)=\1_{\{x\geq y\}}\) and \(g(x,y)=\1_{\{x+y\geq 1\}}\,.\) Then, as a consequence of Theorem \ref{propbhkv}, \(f\) is measure-inducing because it has finite Hardy-Krause variation. But \(g\) is not measure-inducing because it has infinite Vitali variation.
\end{example}

%

\subsubsection{Functions with domain \(\R_+^d\)}

As a direct consequence of Proposition \ref{propcharmif}, right-continuous \(d\)-increasing or \(d\)-decreasing functions are measure-inducing in the sense of Definition \ref{defmifun}; cf. \cite{Tankov-2011} for a left-continuous version.

\begin{corollary}\label{cordifun}
Let \(f\colon \R_+^d\to \R\) be right-continuous. If \(f\) is \(d\)-increasing or \(d\)-decreasing, then \(f\) induces a signed measure by \eqref{defmind} and \eqref{eqfmi2}.
\end{corollary}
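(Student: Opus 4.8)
The plan is to reduce Corollary~\ref{cordifun} to the already-established equivalence \eqref{propcharmif1}~$\Longleftrightarrow$~\eqref{propcharmif5} in Proposition~\ref{propcharmif}. Assume first that $f\colon\R_+^d\to\R$ is right-continuous and $d$-increasing. Then $g:=f$ and $h:=0$ are both right-continuous and $d$-increasing (the zero function trivially so), $h$ is bounded, and the identity $\Delta_{x,y}[f]=\Delta_{x,y}[g]-\Delta_{x,y}[h]$ in \eqref{eqpropcharmif} holds trivially for all $x\leq y$ in $\R_+^d$. Hence condition \eqref{propcharmif5} of Proposition~\ref{propcharmif} is satisfied, and so $f$ induces a signed measure $\nu_f$ by \eqref{defmind}. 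If instead $f$ is $d$-decreasing, apply the argument to $-f$, which is $d$-increasing and right-continuous, to conclude that $-f$ is measure inducing; since $\nu_{-f}=-\nu_f$ follows directly from linearity of the difference operator $D_{x,y}[\cdot]$ in \eqref{defitlimi}, $f$ itself is measure inducing.

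It then remains to verify the second assertion, namely that the induced measure is generated by the half-open-box formula \eqref{eqfmi2}, $\nu_f((x,y])=\Delta_{x,y}[f]$. Since $f$ is right-continuous, this is exactly the content of Proposition~\ref{lemfmi}\eqref{lemfmi2}, which applies once we know $f\in\cF_{mi}(\R_+^d)$; and membership in $\cF_{mi}(\R_+^d)$ — as opposed to the larger class where merely \eqref{defmind} holds — is guaranteed here because a $d$-increasing (or $d$-decreasing) function is in particular componentwise monotone, so all componentwise one-sided limits exist. For the $d$-decreasing case one uses $\nu_f=-\nu_{-f}$ together with $\Delta_{x,y}[f]=-\Delta_{x,y}[-f]$.

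I do not expect any genuine obstacle here: this is a corollary whose entire force has been front-loaded into Proposition~\ref{propcharmif}. The only point requiring a moment's care is the bookkeeping of signs in the $d$-decreasing case and the observation that "$d$-increasing" plus "right-continuous" is enough to land in $\cF_{mi}$ (not just to satisfy \eqref{defmind}), so that Proposition~\ref{lemfmi}\eqref{lemfmi2} is applicable and the reference to the left-continuous analogue in \cite{Tankov-2011} is recovered as the mirror statement.
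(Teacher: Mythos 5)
Your handling of the first assertion is correct and is precisely what the paper intends (the paper offers no written proof, only the remark that the corollary is a direct consequence of Proposition~\ref{propcharmif}): taking \(g:=f\) and \(h:=0\) (and, in the \(d\)-decreasing case, passing to \(-f\), or equivalently taking \(g:=0\) and \(h:=-f\) directly) verifies condition \eqref{propcharmif5} with the bounded member being the zero function, so \(f\) induces a signed measure by \eqref{defmind}; the sign bookkeeping via \(\nu_{-f}=-\nu_f\) is unobjectionable.

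The justification of the second assertion, however, rests on a false claim. You assert that a \(d\)-increasing function is componentwise monotone, hence has all one-sided limits, hence lies in \(\cF_{mi}(\R_+^d)\), which is what licenses the appeal to Proposition~\ref{lemfmi}\eqref{lemfmi2}. But \(d\)-increasing only constrains the full mixed difference \(\Delta_{\varepsilon_1}^1\cdots\Delta_{\varepsilon_d}^d f\geq 0\), not the first-order differences; it is \(\Delta\)-monotonicity (i.e., \(k\)-increasing for \emph{all} \(k\)) that yields componentwise monotonicity. The paper's own Example~\ref{couexa2incfun}, \(f(x_1,x_2)=x_1x_2-x_1\), is \(2\)-increasing yet decreasing in \(x_1\) for \(x_2<1\). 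Worse, a right-continuous \(d\)-increasing function need not possess componentwise left-hand limits at all: \(f(x_1,x_2)=x_1x_2+\sin(1/(1-x_1))\,\1_{\{x_1<1\}}\) is right-continuous and \(2\)-increasing (the perturbation cancels in the mixed difference) but has no left limit in \(x_1\) at \(1\), so \(f\notin\cF_{mi}(\R_+^2)\) and Proposition~\ref{lemfmi}\eqref{lemfmi2} is not applicable as stated. The conclusion \eqref{eqfmi2} is nevertheless true and can be reached without passing through \(\cF_{mi}\): by the proof of Proposition~\ref{propcharmif}, \(\nu_f\) coincides with the measure generated in the usual way by the grounded, right-continuous, \(d\)-increasing function \(F^f\), for which \(\nu_f((x,y])=\Delta_{x,y}[F^f]\) holds by the standard correspondence between right-continuous measure-generating functions and their measures, and \(\Delta_{x,y}[F^f]=\Delta_{x,y}[f]\) because each lower-marginal term is constant in at least one coordinate and is therefore annihilated by the full difference operator. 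Alternatively, approximate \((x,y]\) from the inside by closed boxes \([x+\delta,y]\), use continuity of the measure, the right-continuity of \(f\), and the existence of the iterated limits \(D_{x+\delta,y}[f]\). Either repair should replace the monotonicity claim.
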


Due to Example \ref{exmindgb}, the \(I\)-marginals of a measure-inducing function \(f\) are in general not measure-inducing.
However, if the lower \(I\)-marginal \(f_I\) is either \(|I|\)-increasing or \(|I|\)-decreasing, then \(f_I\) is measure-inducing, see Corollary \ref{cordifun}. Hence the following result follows from the definition of \(\Delta\)-monotone and \(\Delta\)-antitone functions.

\begin{proposition}[\(\Delta\)-monotone and \(\Delta\)-antitone functions are in \(\cF_{mi}^{c,l}(\R_+^d)\)]\label{cordelmof}~\\
Let \(f\colon \R_+^d\to \R\) be a function that satisfies the continuity condition \eqref{contcond0} and \eqref{contboun1}.
\begin{enumerate}[(i)]
\item If \(f\) is \(\Delta\)-monotone, then \(f\in \cF_{mi}^{c,l}(\R_+^d)\,.\) In particular, for \(I\subseteq\{1,\ldots,d\}\,,\) \(I\ne \emptyset\,,\) \(f_I\) induces a non-negative measure by \eqref{defmind}.
\item If \(f\) is \(\Delta\)-antitone, then \(f\in \cF_{mi}^{c,l}(\R_+^d)\,.\) In particular, for \(I\subseteq\{1,\ldots,d\}\,,\) \(I\ne \emptyset\,,\) \((-1)^{|I|} f_I\) induces a non-negative measure by \eqref{defmind}.
\end{enumerate}
\end{proposition}

%
%

\begin{example}
The function \(f\colon \R_+\to \R\) defined by \(f(x)=x^2-2x\) is measure-inducing due to Proposition \ref{propcharmif} because the Jordan-decomposition of \(f=g-h\) is given by the increasing functions
\begin{align*}
g(x)&=(x^2-2x+1)\,\1_{\{x\geq 1\}}\,,&
h(x)&=-(x^2-2x+1)\,\1_{\{x<1\}} +1\,,
\end{align*}
where \(h\) is bounded on \(\R_+\,.\)
\end{example}

The following example shows that a \(d\)-increasing function \(f\) may induce a signed measure in the sense of \eqref{defmind}. However, if \(f\) is not grounded, it does not necessarily generate a signed measure \(\mu\) through \(\mu([0,x])=f(x)\,,\) \(x\in \R_+^d\,.\)

\begin{example}[\(2\)-increasing function]\label{couexa2incfun}~\\
Consider the function \(f\colon \R_+^2\to \R\) defined by \(f(x_1,x_2)=x_1x_2-x_2\,.\) Then, \(f\in \cF_{mi}^{c,l}(\Xi)\) and, in particular, \(f\) is measure-inducing in the sense of Definition \ref{defmifun}.
However, \(f\) does not generate a signed measure \(\mu\) in the sense that \(\mu([0,x])=f(x)\,,\) \(x\in \R_+^2\,,\) because, for \(x_1\in (0,1)\,,\) \(f(x_1,x_2)\to -\infty\) as \(x_2\to \infty\) and, for \(x_1 > 1\,,\) it holds that \(f(x_1,x_2)\to \infty\) as \(x_2\to \infty\,.\)
\end{example}

For \(f\in \cC^d(\Xi)\,,\) denote by \(f^{(d)}:=\frac{\partial^d f}{\partial x_1 \cdots \partial x_d}\) its mixed partial derivative of order \(d\) w.r.t. all components.
Denote by \(f_{(+)}^{(d)}\colon \Xi\to \R_+\) and \(f_{(-)}^{(d)}\colon \Xi\to \R_+\) the positive part and the negative part of \(f^{(d)}\,,\) respectively, i.e., \(f^{(d)} = f_{(+)}^{(d)} - f_{(-)}^{(d)}\,.\) The following result gives for functions in \(\cC^d(\R_+^d)\) a simple condition to be in the class \(\cF_{mi}^{c,l}(\R_+^d)\) and provides a formula for the induced sign measures.

\begin{proposition}[\(\cC^d\)-functions]\label{propcdfun}~\\
Let \(f\colon \R_+^d\to \R\) be a \(\cC^d\)-function. If for all \(I=\{i_1,\ldots,i_k\}\subseteq \{1,\ldots,d\}\,,\) \(I\ne \emptyset\,,\) at least one of the integrals \(\int_{\R^k_+} f_{I,(+)}^{(k)} \de \lambda^k\) and \(\int_{\R_+^k} f_{I,(-)}^{(k)} \de \lambda^k\) is finite, then \(f\in \cF_{mi}^{c,l}(\R_+^d)\) and
\begin{align}\label{eqcalc}
\nu_{f_I}(B) = \int_{B} f_I^{(k)} \de \lambda^k
\end{align}
for all \(B\subseteq \R_+^k\) measurable.
\end{proposition}

\section*{Appendix B: Proofs}

\setcounter{subsection}{0}
\setcounter{theorem}{0}
\renewcommand{\thesection}{B.\arabic{section}}
\renewcommand{\thesubsection}{B.\arabic{subsection}}
\renewcommand{\thelemma}{B.\arabic{lemma}}
\renewcommand{\thetheorem}{B.\arabic{theorem}}
\renewcommand{\theproposition}{B.\arabic{proposition}}
\renewcommand{\thedefinition}{B.\arabic{definition}}
\renewcommand{\theremark}{B.\arabic{remark}}

\subsection{Proofs of the results from Sections \ref{intro} and \ref{secmr}}


\begin{proof}[Proof of Theorem \ref{proppif}]
We show that 
\begin{align}\label{promtheq0}
\psi_g(f) = \sum_{I \subseteq \{1,\ldots,d\}\,, \atop I\ne \emptyset} \int_{\Xi_I} \overline{g^I}\de \nu_{f_I} + g^\emptyset f_\emptyset\,.
\end{align}
This implies \eqref{genformi1} and \eqref{genformi}, i.e., \(\psi_g(f) = \pi_f(\overline{g})\,,\) because on the one hand, for \(I\ne \emptyset\,,\) it holds that \(\overline{g^I}=\overline{g}_I\,,\) see Lemma \ref{lemsurvmarg}. On the other hand, we have that \(g^\emptyset = \lim_{x_1\uparrow b_1} \cdots \lim_{x_d\uparrow b_d} g(x_1,\ldots,x_d)=\overline{g}_\emptyset\) using that \(g\) is grounded.

The proof of \eqref{promtheq0} is given by induction over the dimension \(d\,.\) For the base case, let \(d=1\,.\) First, we consider the specific case where \(f\) is left-continuous and \(g\) is right-continuous. Then, we obtain from the left-continuity of \(f\) and the right-continuity of \(g\) by Lemma \ref{lemfmi} for all \(x,y\in \Xi_1\) that
\begin{align}\label{promtheq1}
f(x)&=\nu_f(\Xi_1\cap [a_1,x))+f_\emptyset\,, ~~~ \text{and}\\
\label{promtheq2}\nu_g((y,b_1] \cap \Xi_1)&= g^\emptyset-g(y) = \overline{g}(y)\,,
\end{align}
applying the continuity conditions \eqref{contboun1} and \eqref{contboun2}, respectively, where the last equality holds by definition of the survival function in \eqref{defsurvfun}. Then \eqref{promtheq0} follows from
\begin{align}
\nonumber \psi_g(f)=\int_{\Xi_1} f(x) \de \nu_g(x) &= \int_{\Xi_1}\left(\nu_f(\Xi_1\cap [a_1,x))+f_\emptyset\right) \de \nu_g(x) \\
\label{promtheq3}&= \int_{\Xi_1}\left(\int_{\Xi_1} \1_{\{a_1\leq y<x\}}(y) \de \nu_f(y)\right) \de \nu_g(x)+f_\emptyset\cdot \nu_g(\Xi_1)\\
\label{promtheq3a}&= \int_{\Xi_1}\left(\int_{\Xi_1} \1_{\{a_1\leq y<x\}}(x) \de \nu_g(x)\right) \de \nu_f(y)+f_\emptyset g^\emptyset\\
\nonumber &= \int_{\Xi_1} \nu_g((y,b_1] \cap \Xi_1) \de \nu_f(y) + f_\emptyset g^\emptyset\\
\nonumber &= \int_{\Xi_1} \overline{g}(y) \de \nu_f(y)+ g^\emptyset f_\emptyset
\end{align}
where the second and sixth equality follow from \eqref{promtheq1} and \eqref{promtheq2}, respectively. For the third equality, we use that \(f_\emptyset\) is finite and that \(g\) is bounded and thus, by Remark \ref{remindmeas}\eqref{eqremindmeas2a}, \(\nu_g(\Xi)\) is finite.
For the fourth equality, we decompose each integral into the sum of two integrals w.r.t. \(\sigma\)-finite measures, compare Remark \ref{remindmeas}, and then apply Fubini's theorem, see, e.g., \cite[Theorem 3.7.5]{Benedetto-2009}. Further, note that \(\nu_g(\Xi_1)=g^\emptyset-g_\emptyset = g^\emptyset\) using that \(g\) is grounded.

For the induction step, assume that \eqref{promtheq0} holds true for all marginals of \(f\) and \(h\) up to dimension \(d-1\,,\) i.e.,
\begin{align}\label{eqproproppif0}
\int_{\Xi_I} f_I(x) \de \nu_{g^I}(x) = \sum_{J\subseteq I \atop J\ne \emptyset} \int_{\Xi_J} \overline{g^J}(u)\de \eta_{f_J}(u) + g^\emptyset f_\emptyset\,.
\end{align}
for all \(I\subsetneq \{1,\ldots,d\}\) using that \((g^I)^J =g^J\) and \((f_I)_J=f_J\) for \(J\subseteq I\) and, in particular, \((g^I)^\emptyset = g^\emptyset\) and \((f_I)_\emptyset=f_\emptyset\,.\)
Note that \(g^I\) is measure-inducing due to Lemma \ref{lemmargdisgrou}.
 Since \(f\) is left-continuous and measure-inducing, we obtain from Lemma \ref{lemfmi}\eqref{lemfmi1} and from the definition of the lower \(I\)-marginal in \eqref{defqmarg} that
\begin{align}\label{eqproproppif1}
\begin{split}
\nu_f\left(\Xi\cap \bigtimes_{i=1}^d [a_i,x_i)\right)& = \lim_{\varepsilon_i \downarrow 0 \atop i=1,\ldots,d} \nu_f\left( \bigtimes_{i=1}^d [a_i+\varepsilon_i,x_i)\right) \\
&= \sum_{k=0}^d \sum_{I\subseteq \{1,\ldots,d\} \atop I=\{i_1,\ldots,i_k\}} (-1)^{d-|I|} f_I(x_{i_1},\ldots,x_{i_k})\\
& = f(x_1,\ldots,x_d) + \sum_{k=0}^{d-1} \sum_{I\subsetneq \{1,\ldots,d\}\atop I=\{i_1,\ldots,i_k\}} (-1)^{d-|I|} f_I(x_{i_1},\ldots,x_{i_k})\,.
\end{split}
\end{align}
Similarly, since \(h\) is right-continuous and measure-inducing, Proposition \ref{lemfmi}\eqref{lemfmi2} as well as the definition of the survival function in \eqref{defsurvfun} imply
\begin{align}\label{eqproproppif2}
\nu_g\left(\Xi\cap \bigtimes_{i=1}^d (x_i,b_i]\right) = \lim_{\varepsilon_i \downarrow 0\atop i=1,\ldots,d}\nu_g\left( \bigtimes_{i=1}^d (x_i,b_i-\varepsilon_i]\right)
= \overline{g}(x_1,\ldots,x_d)\,.
\end{align}
We now obtain on the one hand by an application of Fubini's theorem to \(\sigma\)-finite signed measures that
\begin{align}\label{eqproproppif3}
\begin{split}
&\int_{\Xi} \nu_f\left(\Xi\cap \bigtimes_{i=1}^d [a_i,x_i)\right) \de \nu_g(x_1,\ldots,x_d)\\
&= \int_{\Xi}\left(\int_{\Xi} \1_{\{a_1\leq y_1<x_1\}} \cdots \1_{\{a_d\leq y_d<x_d\}} \de \nu_f(y_1,\ldots,y_d)\right) \de \nu_g(x_1,\ldots,x_d)\\
&= \int_{\Xi}\left(\int_{\Xi} \1_{\{a_1\leq y_1<x_1\}} \cdots \1_{\{a_d\leq y_d<x_d\}} \de \nu_g(x_1,\ldots,x_d)\right) \de \nu_f(y_1,\ldots,y_d)\\
&= \int_{\Xi} \nu_g\left(\Xi\cap \bigtimes_{i=1}^d (y_i,b_i]\right)\de \nu_f(y_1,\ldots,y_d)\\
&= \int_{\Xi} \overline{g}(y) \de \nu_f(y)\,,
\end{split}
\end{align}
where the last equality follows from \eqref{eqproproppif2}.
On the other hand, it holds that
\begin{align}
\nonumber &-\int_{\Xi}  \sum_{k=0}^{d-1} \sum_{I\subsetneq \{1,\ldots,d\}\atop I=\{i_1,\ldots,i_k\}} (-1)^{d-|I|} f_I(x_{i_1},\ldots,x_{i_k}) \de \nu_g(x_1,\ldots,x_d)\\
\nonumber &=- (-1)^d f_\emptyset g^\emptyset- \sum_{k=1}^{d-1} \sum_{I\subsetneq \{1,\ldots,d\}\atop I=\{i_1,\ldots,i_k\}} (-1)^{d-|I|} \int_{\Xi_I} f_I(x) \de \nu_{g^I}(x) \\
\label{eqproproppif4}&= (-1)^{d-1} g^\emptyset f_\emptyset  - \sum_{k=1}^{d-1} \sum_{I\subsetneq\{1,\ldots,d\}\atop I=\{i_1,\ldots,i_k\}} (-1)^{d-|I|} \left(\sum_{J\subseteq I\atop J\ne \emptyset} \int_{\Xi_J} \overline{h^J}(u) \de \nu_{f_J}(u)+ g^\emptyset f_\emptyset\right)\\
\label{eqproproppif4a}&= \sum_{k=1}^{d-1}\sum_{I\subsetneq \{1,\ldots,d\}\atop I=\{i_1,\ldots,i_k\}}\int_{\Xi_I} \overline{g^I}(x)\de \nu_{f_I}(x) + g^\emptyset f_\emptyset\,,
\end{align}
where we apply for the second equality the induction hypothesis in \eqref{eqproproppif0}. To show the first equality, we obtain for \(I=\emptyset\) that
\begin{align*}
\int_{\Xi} (-1)^d f_\emptyset \de \nu_g(x_1,\ldots,x_d)= (-1)^d f_\emptyset \cdot \nu_g(\Xi) = (-1)^d f_\emptyset g^\emptyset \,,
\end{align*}
using that \(f_\emptyset\) and \(\nu_g(\Xi)=g^\emptyset\) are finite. For \(I=\{i_1,\ldots,i_k\}\subsetneq \{1,\ldots,d\}\,,\) \(k\geq 1\,,\) denote for \(I^c:=\{1,\ldots,d\}\setminus I\) by \((\nu_g^+)^{I|I^c}\) and \((\nu_g^-)^{I|I^c}\) the conditional measure of \(\nu_g^+\) and \(\nu_g^-\,,\) respectively, w.r.t. the components of \(I\) given the components of \(I^c\,.\) Then, by the disintegration theorem applied to the Jordan decomposition of \(\nu_g=\nu_g^+-\nu_g^-\,,\) it follows that
\begin{align*}
&\int_{\Xi} f_I(x_{i_1},\ldots,x_{i_k}) \de \nu_g(x_1,\ldots,x_d)\\
&= \int_{\Xi} f_I(x_{i_1},\ldots,x_{i_k}) \de \nu_g^+(x_1,\ldots,x_d) - \int_{\Xi} f_I(x_{i_1},\ldots,x_{i_k}) \de \nu_g^-(x_1,\ldots,x_d)\\
&= \int_{\Xi_{I^c}}\int_{\Xi_I} f_I(x_I) \de (\nu_g^+)^{I|I^c}(x_I|x_{I^c}) \de (\nu_g^+)^{I^c}
-  \int_{\Xi_{I^c}}\int_{\Xi_I}  f_I(x_I) \de (\nu_g^-)^{I|I^c}(x_I|x_{I^c}) \de (\nu_g^-)^{I^c}\\
&= \int_{\Xi_I} f_I(x) \de (\nu_g^+)^I  -  \int_{\R_+^{|I|}} f_I(x) \de (\nu_g^-)^I\\
& = \int_{\Xi_I} f_I(x) \de \nu_g^I\\
& =  \int_{\Xi_I} f_I(x) \de \nu_{g^I}(x)\,,
\end{align*}
where the last equality is given by Lemma \ref{lemmargdisgrou} using that \(g\in \cF_{mi}^c(\Xi)\) is grounded and bounded.
 
 For the equality in \eqref{eqproproppif4a}, we first sum over all terms \(g^\emptyset f_\emptyset\) in \eqref{eqproproppif4} and obtain
 \begin{align*}
 (-1)^{d-1} g^\emptyset f_\emptyset - \sum_{k=1}^{d-1} (-1)^{d-k} \binom d k g^\emptyset f_\emptyset &= \sum_{k=0}^{d-1} (-1)^{d-k-1} \binom d k g^\emptyset f_\emptyset \\
 &= \underbrace{\sum_{k=0}^{d} (-1)^{d-k-1} \binom d k g^\emptyset f_\emptyset}_{=0} - (-1)^{d-d-1} \binom d d g^\emptyset f_\emptyset = g^\emptyset f_\emptyset
\end{align*}  
 applying a well-known identity for the sum of alternating binomial coefficients.
For simplifying the alternating sum of the integrals in \eqref{eqproproppif4},  we determine for fixed \(J\subsetneq \{1,\ldots,d\}\,,\) \(|J|=:m < d\,,\) the number of proper subsets \(I\) of \(\{1,\ldots,d\}\) with \(|I|=k\) for some \(k\in \{m,m+1,\ldots,d-1\}\,,\) such that \(I\) is also a superset of \(J\,,\) i.e., we determine \(N_k:=\#\{I\colon |I|=k\,,  J\subseteq I\subsetneq \{1,\ldots,d\}\}\,.\) Subtracting the set \(J\,,\) we have that \(N_k=\#\{I\colon |I|=k-m\,, I\subsetneq \{1,\ldots,d\}\setminus J\} = \binom {d-m}{k-m}\,.\) So taking the alternating sum, for each fixed marginal \(J\,,\) the signed number of integrals w.r.t. the marginal \(J\) in \eqref{eqproproppif4} sums up to
\begin{align*}
\sum_{k=m}^{d-1}(-1)^{d-k}N_k =\sum_{k=m}^{d-1}(-1)^{d-k} \binom{d-m}{k-m} &= \sum_{k=0}^{d-1-m} (-1)^{d-k-m} \binom{d-m}k \\
&=  \sum_{k=0}^{d-m} (-1)^{d-k-m} \binom{d-m}k - \binom{d-m}{d-m} = 0-1=-1
\end{align*} 
applying again the identity for the sum of alternating binomial coefficients.

Combining \eqref{eqproproppif1}, \eqref{eqproproppif3}, and \eqref{eqproproppif4},
we finally obtain
\begin{align*}
\psi_g(f) &=\int_{\Xi} f(x) \de \nu_g(x)\\
&= \int_{\Xi} \nu_f \left(\Xi\cap \bigtimes_{i=1}^d [a_i,x_i)\right) \de \nu_g(x_1,\ldots,x_d)\\
&~~- \int_{\Xi}  \sum_{k=0}^{d-1} \sum_{I\subsetneq \{1,\ldots,d\}\atop I=\{i_1,\ldots,i_k\}} (-1)^{d-|I|} f_I(x_{i_1},\ldots,x_{i_k}) \de \nu_g(x_1,\ldots,x_d)\\
&= \int_{\Xi} \overline{g}(x_1,\ldots,x_d)\de \nu_f(x_1,\ldots,x_d) + \sum_{k=1}^{d-1}\sum_{I\subsetneq \{1,\ldots,d\}\atop I=\{i_1,\ldots,i_k\}}\int_{\Xi_I} \overline{g^I}(x)\de \nu_{f_I}(x) + g^\emptyset f_\emptyset\\
\nonumber &= \sum_{I\subseteq \{1,\ldots,d\} \atop I\ne \emptyset} \int_{\Xi_I} \overline{g^I}(u)\de \nu_{f_I}(u) + g^\emptyset f_\emptyset
\end{align*}
which proves \eqref{promtheq0} in the case that \(f\) is left-continuous and \(g\) is right-continuous.

Before we consider the general case, assume that \(f\) is right-continuous and \(g\) is left-continuous. Then \eqref{promtheq0} follows in the same way, taking into account the following changes:  Due to Lemma \ref{lemfmi}\eqref{lemfmi2}, substitute for the base case of the induction in \eqref{promtheq1} and \eqref{promtheq2} \(\nu_f(\Xi_1\cap [a_1,x])\) and \(\nu_g([y,b_1]\cap \Xi_1)\) for \(\nu_f(\Xi_1\cap [a_1,x))\) and \(\nu_g((y,b_1]\cap \Xi_1)\,,\) respectively. Then, the set of the indicator function in \eqref{promtheq3} and \eqref{promtheq3a} changes from \(\{a_1 \leq y < x\}\) to \(\{a_1\leq y \leq x\}\,.\)
For the induction step, substitute in \eqref{eqproproppif1}, \eqref{eqproproppif2}, and \eqref{eqproproppif3} \(\nu_f\left(\Xi\cap \bigtimes_{i=1}^d [a_i,x_i]\right)\) and \(\nu_g\left(\Xi\cap \bigtimes_{i=1}^d [x_i,b_i]\right)\) for \(\nu_f\left(\Xi\cap \bigtimes_{i=1}^d [a_i,x_i)\right)\) and \(\nu_g\left(\Xi\cap \bigtimes_{i=1}^d (x_i,b_i]\right)\,,\) respectively, using that \(f\) and \(g\) fulfil the continuity conditions \eqref{contboun1} and \eqref{contboun2}.

Now, consider the general case where \(f\) and \(g\) have no common discontinuities on the same side of each point. If \(f\) has a jump discontinuity on the left/right side of \(x\) in the \(i\)th component, then, due to Assumption \ref{asscont}\eqref{asscont1}, \(f\) is right-/left-continuous in \(x\) in the \(i\)-th component, and, due to \eqref{defcomdiscy}, \(g\) and thus \(\overline{g}\) are left-/right-continuous at \(x\) in the \(i\)th component. In this case, choose the interval \([a_i,x_i]\)/\([a_i,x_i)\) for the \(i\)th component in \eqref{eqproproppif1} and \([x_i,b_i]\)/\((x_i,b_i]\) for the \(i\)th component in \eqref{eqproproppif2}. The rest of the proof follows similarly.
\end{proof}

\begin{proof}[Proof of Proposition \ref{lemmargtraf}]
We show for \(I=\{1,\ldots,d\}\) that
\begin{align}\label{eqpromt1}
\int_{[0,1]^{|I|}} g_I(u) \de \nu_{(f\, \circ(F_1^{[-1]},\ldots,F_d^{[-1]}))_I}(u)=
\int_{\Xi_I} (g \circ (F_1,\ldots,F_d))_I(x) \de \nu_{f_I}(x)\,.
\end{align}
Since, for \( I\subsetneq \{1,\ldots,d\}\,,\) \(I\ne \emptyset\,,\) the integral transformation in \eqref{eqpromt1} follows similarly, the statement follows from \(g_\emptyset=(g\circ (F_1,\ldots,F_d))_\emptyset\) and \(f(F_1^{[-1]}(a_1),\ldots,F_d^{[-1]}(a_d)) = f(0,\ldots,0)=f_\emptyset\) noting that \(F_i^{[-1]}(a_i)=0\) by definition of the generalized inverse in \eqref{defgeninvy}.

We prove \eqref{eqpromt1} for \(I=\{1,\ldots,d\}\) by algebraic induction.
Since \(f_I\) is measure-inducing for all \(I\subseteq\{1,\ldots,d\}\,,\) \(I\ne \emptyset\,,\) the left-hand limits of \(f\) exist, compare Theorem \ref{thecfmicl}. Thus, we may assume due to Lemma \ref{lemunimifun} that \(f\) is left-continuous, applying that \(f\) satisfies continuity condition \eqref{contboun1}.
Assume first that \(g\) is an indicator function given by \(g(u)=\1_{\bigtimes_{i=1}^d [\alpha_i,\beta_i)}(u)\,,\) \(u=(u_1,\ldots,u_d)\in [0,1]^d\,,\) where \(\alpha_i,\beta_i\in (0,1]\) such that \(\alpha_i<\beta_i\) for all \(i\in \{1,\ldots,d\}\,.\)
We show that
\begin{align}\label{eqpromt2}
\int_{[0,1]^d} g(u) \de \nu_{f\,\circ (F_1^{[-1]},\ldots,F_d^{[-1]})} (u) = \int_{\Xi} g(F_1(x_1),\ldots,F_d(x_d)) \de \nu_f(x_1,\ldots,x_d)  \,.
\end{align}
It holds that \(\alpha_i\leq F_i(x_i)<\beta_i\) if and only if \(F_i^{[-1]}(\alpha_i)\leq x_i < F_i^{[-1]}(\beta_i)\,,\) using that \(F_i(a_i)=0\) and \(F_i(b_i)=1\,,\) see, e.g., \cite[Proposition 1(e)]{Winter-1997}. Hence,
we obtain for the right-hand side of \eqref{eqpromt2} that
\begin{align*}
\int_{\Xi} g(F_1(x_1),\ldots,F_d(x_d)) \de \nu_f(x_1,\ldots,x_d) 
&= \int_{\Xi} \1_{\bigtimes_{i=1}^d [\alpha_i,\beta_i)}(F_1(x_1),\ldots,F_d(x_d)) \de \nu_f(x)\\
&= \int_{\Xi} \1_{\bigtimes_{i=1}^d [F_i^{[-1]}(\alpha_i),F_i^{[-1]}(\beta_i))}(x) \de \nu_f(x)\\
&= \nu_f\left(\bigtimes_{i=1}^d [F_i^{[-1]}(\alpha_i),F_i^{[-1]}(\beta_i))\right).
\end{align*}
For the left-hand side of \eqref{eqpromt2}, we obtain for \(\varepsilon_i=\beta_i-\alpha_i\) and \(\delta_i:=F_i^{[-1]}(\beta_i)-F_i^{[-1]}(\alpha_i)\,,\) \(i=1,\ldots,d\,,\) that
\begin{align*}
\int_{[0,1]^d} g(u)\de \nu_{f\,\circ (F_1^{[-1]},\ldots,F_d^{[-1]})}(u) &= \nu_{f\,\circ(F_1^{[-1]},\ldots,F_d^{[-1]})}\left(\bigtimes_{i=1}^d [\alpha_i,\beta_i)\right) \\
&= \Delta_{\varepsilon_1}^1 \cdots \Delta_{\varepsilon_d}^d \,f\,\circ (F_1^{[-1]},\ldots,F_d^{[-1]})(\alpha_1,\ldots,\alpha_d)\\
&= \Delta_{\delta_1}^1 \cdots \Delta_{\delta_d}^d \,f \left((F_1^{[-1]}(\alpha_1),\ldots,F_d^{[-1]}(\alpha_d)\right))\\
&= \nu_f\left(\bigtimes_{i=1}^d [F_i^{[-1]}(\alpha_i),F_i^{[-1]}(\beta_i))\right)\,.
\end{align*}
applying Proposition \ref{lemfmi}\eqref{lemfmi1} where we make use of the assumption that \(f\) and thus \(f\circ (F_1^{[-1]},\ldots,F_d^{[-1]})\) are left-continuous.
Hence, \eqref{eqpromt2} also holds true whenever \(h\) is an indicator function of a Cartesian product of Borel sets, see, e.g., \cite[Theorem 10.3]{Billingsley-1995}.

By a standard approximation argument, \eqref{eqpromt2} also holds true if \(h\) is a \(\nu_f^+\)- and \(\nu_f^-\)-integrable and thus a \(\nu_f\)-integrable function which concludes the proof.
\end{proof}

\subsection{Proofs of the results from Sections \ref{secvonv} and \ref{appstoord}}

\begin{proof}[Proof of Theorem \ref{theconvfin}:]
Since \(f_I\,,\) \(I\subseteq \{1,\ldots,d\}\,,\) is defined on a compact set, it induces a finite signed measure, see Remark \ref{remindmeas}\eqref{remindmeas1}. 
As a consequence of the assumptions, it holds that \(|g|\leq M\) and thus \(|\overline{h}_I|\leq d^2 M.\)
Since \(g\) is the limit of measurable functions, it follows that \(g\) and thus \(\overline{g}_I\) are measurable.
Using that \((\overline{g_n})_I(x) \to \overline{g}_I(x)\) for all \(x\in [0,1]^d\,,\) the dominated convergence theorem yields for \(I\subseteq \{1,\ldots,d\}\,,\) \(I\ne \emptyset\,,\) that
\begin{align}\label{prth37}
\begin{split}
\int_{[0,1]^{|I|}} (\overline{g_n})_I\de \nu_{f_I} &= \int_{[0,1]^{|I|}} (\overline{g_n})_I\de \nu_{f_I}^+- \int_{[0,1]^{|I|}} (\overline{g_n})_I\de \nu_{f_I}^- \\
& \xrightarrow{n\to \infty} \int_{[0,1]^{|I|}} \overline{g}_I\de \nu_{f_I}^+ - \int_{[0,1]^{|I|}} \overline{g}_I\de \nu_{f_I}^-
= \int_{[0,1]^{|I|}} \overline{g}_I\de \nu_{f_I}\,,
\end{split}
\end{align}
using that \(\overline{g}_I\) is bounded and \(\nu_{f_I}^+\) as well as \(\nu_{f_I}^-\) are finite.
Since \((\overline{g_n})_\emptyset = g_n(1,\ldots,1)\to g(1,\ldots,1)=\overline{g}_\emptyset\,,\) \eqref{prth37} implies that
\begin{align*}
\psi_{g_n}(f) = \pi_f(\overline{g_n}) &= \sum_{I\subseteq \{1,\ldots,d\}\atop I\ne \emptyset} \int_{[0,1]^{|I|}} (\overline{g_n})_I \de \nu_{f_I} + (\overline{g_n})_\emptyset f_\emptyset\\
& \xrightarrow{n\to \infty} \sum_{I\subseteq \{1,\ldots,d\}\atop I\ne \emptyset} \int_{[0,1]^{|I|}} \overline{g}_I \de \nu_{f_I} + \overline{g}_\emptyset f_\emptyset = \pi_f(\overline{g})
\end{align*}
where we apply Theorem \ref{proppif} for the first equality.
\end{proof}

\begin{proof}[Proof of Theorem \ref{theconvthsec}:]
Let \(U\) be a uniformly on \((0,1)\) distributed random variable defined on a non-atomic probability space \((\Omega,\cA,P)\,.\) For \(1\leq i \leq d\,,\) define \(X_i:=F_i^{[-1]}(U)\,.\) For all \(I=\{i_1,\ldots,i_k\}\subseteq\{1,\ldots,d\}\,,\) \(I\ne \emptyset\,,\) the vector \((X_{i_1},\ldots,X_{i_k})\) is a comonotonic random vector with distribution function given by
\begin{align}\label{eqtheconvthsec0}
F_{X_{i_1},\ldots,X_{i_k}}(x)=M^k(F_{i_1}(x_1),\ldots,F_{i_k}(x_k))
\end{align}
for \(x=(x_1,\ldots,x_k)\in \R^k\,,\) see, e.g., \cite[Section 5.1.6]{Embrechts-2015}, where \(M^k\) is the upper Fr\'{e}chet copula defined by 
\begin{align}\label{defuppfrecop}
M^k(u) :=\min_{1\leq i \leq k}\{u_i\} ~~~\text{for } u=(u_1,\ldots,u_k)\in [0,1]^k\,.
\end{align}
It follows by assumption \eqref{theconvthsec3} inductively for \(|I|=1,\ldots,d\,,\) \(I=\{i_1,\ldots,i_k\}\subseteq\{1,\ldots,d\}\,,\) that all the integrals
\begin{align}
\nonumber &\int_0^1 f_I\left(F_{i_1}^{[-1]}(u),\ldots,F_{i_k}^{[-1]}(u)\right) \de u \\
\nonumber &= \int_\Omega f_I\left(F_{i_1}^{[-1]}(U),\ldots,F_{i_k}^{[-1]}(U)\right) \de P\\ 
\nonumber &= \int_{\R^d} f_I(x) \de P^{(X_{i_1},\ldots,X_{i_k})}(x) \\
\nonumber &= \int_{\R^d} f_I(x_1,\ldots,x_k) \de M^k(F_{i_1}(x_1),\ldots,F_{i_k}(x_k))\\
\nonumber &= \sum_{l=1}^k \sum_{J\subseteq I\atop J=\{j_1,\ldots,j_l\}} \int_{\R^{|J|}} (\overline{M^k})_J (F_{j_1}(x_1),\ldots,F_{j_l}(x_l)) \de \nu_{f_J}(x_1,\ldots,x_l) + f_\emptyset\\
\label{eqtheconvthsec1}&= f_\emptyset + \sum_{l=1}^k \sum_{J\subseteq I\atop J=\{j_1,\ldots,j_l\}} \bigg[\int_{\R^{|J|}} \left( 1- \max_{j\in J}\{F_{j}(x_j)\}\right) \de \nu_{f_J}^+(x_{j_1},\ldots,x_{j_l}) \\
\label{eqtheconvthsec2}&~~~~~~~~~~~~~~~~~~~~~~~~~~~~~~~~~~~~~~~~~  - \int_{\R^{|J|}} \left( 1- \max_{j\in J}\{F_{j}(x_j)\}\right) \de \nu_{f_J}^-(x_{j_1},\ldots,x_{j_l})\bigg]
\end{align}
exist, where the first equality holds true since \(U\) is uniformly distributed on \((0,1)\,.\)
The second equality follows with the definition of \(X_i\,,\) \(i=1,\ldots,d\,.\)
The third equality holds true due to \eqref{eqtheconvthsec0}.
The fourth equality is a consequence of Proposition \ref{proppif} using that \(f_I\) is left-continuous and \(M^k\circ(F_{i_1},\ldots,F_{i_k})\) is right-continuous. Note that \((\overline{M^k})_\emptyset=1\) and \(\left(\overline{M^k\circ (F_{i_1},\ldots,F_{i_k})}\right)_J=(\overline{M^k})_J\circ(F_{j_1},\ldots,F_{j_l})\) because \(F_{1},\ldots,F_{d}\) are increasing for \(J=\{j_1,\ldots,j_l\}\subseteq I\,,\) \(J\ne \emptyset\,.\)
For the last equality, we apply the Jordan decomposition of the signed measure \(\nu_{f_J}\) and use that \(\overline{M^l}(u)=1-\max_{1\leq i \leq l}\{u_i\}\) for \(u=(u_1,\ldots,u_l)\in [0,1]^l\,.\)

Let \(I=\{i_1,\ldots,i_k\}\subseteq \{1,\ldots,d\}\,,\) \(I\ne \emptyset\,.\)
Due to assumption \eqref{theconvthsec2}, it holds that
\begin{align*}
\left| (\overline{g_n})_I(x_1,\ldots,x_k)\right| \leq \alpha (1-\max\{F_{i_1}(x_1),\ldots,F_{i_k}(x_k)\}) ~~~\text{for all } (x_1,\ldots,x_k)\in \R_+^k\,,
\end{align*}
where the right-hand side is by \eqref{eqtheconvthsec1} and \eqref{eqtheconvthsec2} \(\nu_{f_I}^+\)- and \(\nu_{f_I}^-\)-integrable.

Due to assumption \eqref{theconvthsec1}, it follows for all \(x\in \R^d\) that \(\overline{g_n}(x) \to \overline{g}(x)\) and thus \((\overline{g_n})_I(x) \to \overline{g}_I(x)\) for all \(I\subseteq\{1,\ldots,d\}\,,\) \(I\ne \emptyset\,.\)
Hence, we obtain from the dominated convergence theorem for all \(I=\{i_1,\ldots,i_k\}\subseteq \{1,\ldots,d\}\,,\) \(I\ne \emptyset\,,\) that
\begin{align}\label{theconvthsec4}
\begin{split}
\int_{\R_+^{|I|}} (\overline{g_n})_I(x) \de \nu_{f_I}^+(x) &\to \int_{\R_+^{|I|}} \overline{g}_I(x) \de \nu_{f_I}^+(x) ~~~ \text{and} \\ \int_{\R_+^{|I|}} (\overline{g_n})_I(x) \de \nu_{f_I}^-(x) &\to \int_{\R_+^{|I|}} \overline{g}_I(x) \de \nu_{f_I}^-(x)
\end{split}
\end{align}
as \(n\to \infty\,.\)

By assumption \eqref{theconvthsec1b}, it holds that \((\overline{g_n})_\emptyset = \overline{g_n^\emptyset} \to \overline{g^\emptyset}=(\overline{g})_\emptyset\,.\) This implies with \eqref{theconvthsec4} that
\begin{align*}
\psi_{g_n}(f) = \pi_f(\overline{g_n}) \to \pi_f(\overline{h})
\end{align*}
where the equality follows from Theorem \ref{proppif}.
\end{proof}

\begin{proof}[Proof of Lemma \ref{propsvsemcopbou}:]
For \(u=(u_1,\ldots,u_d)\in [0,1]^d\,,\) assume w.l.o.g. that \(u_1=\max_{1\leq i \leq d}\{u_i\}\,.\)
By definition of the survival function in \eqref{defsurvfun}, it holds that
\begin{align}
\nonumber \overline{S}(u) &= \sum_{k=0}^d \sum_{I\subseteq \{1,\ldots,d\}\atop I=\{i_1,\ldots,i_k\}} (-1)^{|I|} S^I(u_{i_1},\ldots,u_{i_k}) \\
\label{eqpropsvsemcopbou2}&= 1-u_1 + \sum_{k=1}^{d-1} \sum_{J\subseteq \{2,\ldots,d\} \atop J=\{j_1,\ldots,j_k\}} (-1)^{|J|} S^{J\cup \{1\}} (1,u_{j_1},\ldots,u_{j_k}) \\
\label{eqpropsvsemcopbou3}& ~~~~~~~~~~~~+\sum_{k=1}^{d-1} \sum_{J\subseteq \{2,\ldots,d\} \atop J=\{j_1,\ldots,j_k\}} (-1)^{|J|+1} S^{J\cup \{1\}} (u_1,u_{j_1},\ldots,u_{j_k})\\
\label{eqpropsvsemcopbou1}&\begin{cases}
\leq 1-u_1 + L \,2^{d-2} (1-u_1) \\
\geq 1-u_1-L\, 2^{d-2} (1-u_1)
\end{cases}
\end{align}
where we use for the second equality the definition of the upper \(I\)-marginal and that \(S\) has uniform marginals. To show the inequalities in \eqref{eqpropsvsemcopbou1}, we obtain for \(J=\{j_1,\ldots,j_k\}\subseteq \{2,\ldots,d\}\) in the case that \(|J|\) is even
\begin{align}\label{eqpropsvsemcopbou4}
\begin{split}
&(-1)^{|J|} S^{J\cup \{1\}} (1,u_{j_1},\ldots,u_{j_k}) + (-1)^{|J|+1} S^{J\cup \{1\}} (u_1,u_{j_1},\ldots,u_{j_k}) \\
&= S^{J\cup \{1\}} (1,u_{j_1},\ldots,u_{j_k}) - S^{J\cup \{1\}} (u_1,u_{j_1},\ldots,u_{j_k}) \\
&\begin{cases} \leq L\, (1-u_1) \\ \geq 0\,, \end{cases}
\end{split}
\end{align}
applying that \(S\) is \(L\)-Lipschitz continuous and increasing, respectively. In the case that \(|J|\) is odd, it follows similarly that
\begin{align}\label{eqpropsvsemcopbou5}
\begin{split}
&(-1)^{|J|} S^{J\cup \{1\}} (1,u_{j_1},\ldots,u_{j_k}) + (-1)^{|J|+1} S^{J\cup \{1\}} (u_1,u_{j_1},\ldots,u_{j_k}) \\
&= -S^{J\cup \{1\}} (1,u_{j_1},\ldots,u_{j_k}) + S^{J\cup \{1\}} (u_1,u_{j_1},\ldots,u_{j_k}) \\
&\begin{cases} \leq 0 \\\geq -L\, (1-u_1)  \,.\end{cases}
\end{split}
\end{align}
Since each double sum in \eqref{eqpropsvsemcopbou2} and \eqref{eqpropsvsemcopbou3} has in total \(2^{d-1}\) summands for half of which \(|J|\) is even/odd, we obtain from \eqref{eqpropsvsemcopbou4} and \eqref{eqpropsvsemcopbou5} the inequalities in \eqref{eqpropsvsemcopbou1}. This yields
\begin{align*}
\overline{S}(u) &\leq 1-u_1+L \,2^{d-2} (1-u_1) =  (L\, 2^{d-2}+1) (1-u_1) ~~~\text{and} \\
-\overline{S}(u) & \leq -(1-u_1) + L\, 2^{d-2} (1-u_1) =  (L\, 2^{d-2}-1) (1-u_1) \leq  (L\, 2^{d-2}+1) (1-u_1)\,,
\end{align*}
which proves the statement.
\end{proof}

\begin{proof}[Proof of Corollary \ref{corconsemcopres}:]
We verify the assumptions of Theorem \ref{theconvthsec} choosing for \(F_1,\ldots,F_d\) the distribution function of a uniform distribution on \([0,1]\,,\) i.e., \(F_i(x)=x\) for all \(x\in [0,1]\) and \(i\in \{1,\ldots,d\}\,.\)

 Since \(F_{i,n}\) is the distribution function of a distribution on \((0,1)\) with finite support and since \(S\) is continuous and grounded, it follows that \(g_n\colon \R_+^d \to [0,1]\) defined by \(g_n:=S\circ (F_{1,n},\ldots,F_{d,n})\) is right-continuous, measure-inducing, and grounded. Condition \eqref{corconsemcopres1} and the continuity of \(S\) imply that \(h_n\to h:=S\) pointwise, i.e., assumption \eqref{theconvthsec1} of Theorem \ref{theconvthsec} holds true.
Since \(S\) is a continuous semi-copula and \(F_{i,n}(0)=0\) for all \(i\) and \(n\,,\) it follows that \(g_n^\emptyset=1 = g^\emptyset\) for all \(n\,.\)
So, assumption \eqref{theconvthsec1b} of Theorem \ref{theconvthsec} is fulfilled. 

Now, fix \(n\in \N\) and let \((x_1,\ldots,x_d)\in [0,1]^d\,.\) 
Since \(F_{i,n}\) is the distribution function of a distribution on \((0,1)\) with finite support, there exists \(\varepsilon>0\) such that \(F_{i,n}(v)\geq v\) (or even such that \(F_{i,n}(v)=1\)) for all \(v\in [1-\varepsilon,1]\) and for all \(i\in \{1,\ldots,d\}\,.\) 
Define \(\beta:=  (L\,2^{d-2}+1)\) and \(\alpha:=\beta/\varepsilon\,.\)\\
If \(\max_{i=1,\ldots,d}\{x_i\}\geq 1-\varepsilon\,,\) then it holds that
\begin{align}\label{corveras1}
|\overline{S}(F_{1,n}(x_1),\ldots,F_{d,n}(x_d))| \leq \beta \,(1-\max_{1\leq i \leq d}\{F_{i,n}(x_i)\}) \leq \beta \,(1-\max_{1\leq i \leq d}\{x_i\})\leq \alpha \,(1-\max_{1\leq i \leq d}\{x_i\})\,,
\end{align}
where the first inequality follows from Lemma \ref{propsvsemcopbou}. The second inequality holds true
because \(\max_i\{F_{i,n}(x_i)\}\geq \max_i\{x_i\}\) whenever there exists \(x_i\) such that \(x_i\in [1-\varepsilon,1]\,.\) The last inequality is fulfilled because \(\varepsilon\in (0,1)\) and \(\max_i\{x_i\}\leq 1\,.\)\\
If \(\max_{i=1,\ldots,d}\{x_i\} < 1-\varepsilon\,,\) then we obtain 
\begin{align}\label{corveras2}
|\overline{S}(F_{1,n}(x_1),\ldots,F_{d,n}(x_d))| \leq \beta \,(1-\max_{1\leq i \leq d}\{F_{i,n}(x_i)\}) \leq \alpha \varepsilon \leq \alpha \,(1-\max_{1\leq i \leq d}\{x_i\})\,,
\end{align}
where the first inequality follows from Lemma \ref{propsvsemcopbou}. The second inequality holds true because \(F_{i,n}(x_i)\geq 0\) for all \(i\in \{1,\ldots,d\}\) and \(n\in \N\,.\) The last inequality follows from \(\alpha>0\) and \(1-\max_i\{x_i\}> \varepsilon\,.\)

  Due to \eqref{corveras1} and \eqref{corveras2}, also condition \eqref{theconvthsec2} of Theorem \ref{theconvthsec} is satisfied. Then, Theorem \ref{theconvthsec} implies with condition \eqref{corconsemcopres3} of Corollary \ref{corconsemcopres} that
\begin{align*}
\psi_{S\circ (F_{1,n},\ldots,F_{d,n})}(f) =\psi_{g_n}(f)
\to \pi_f(\overline{g})
 = \pi_f(\overline{S})
\end{align*}
which proves the statement.
\end{proof}

\begin{proof}[Proof of Theorem \ref{cgoo}:]
We first show \eqref{cgoo2}: Assume that \(H\leq_{uo} H'\) and thus \(\overline{H}(x)\leq \overline{H'}(x)\) for all \(x\in \R^d\,.\) Since for all \(I=\{i_1,\ldots,i_k\}\subseteq \{1,\ldots,d\}\,,\) the \(I\)-marginal of \(f\) exists and induces a (non-negative) measure on \(\R_+^{|I|}\,,\) see Proposition \ref{cordelmof}, it follows that
\begin{align}
\int_{\R_+^{|I|}} \overline{H}(x)\de \nu_{f_I}(x) \leq \int_{\R_+^{|I|}} \overline{H'}(x)\de \nu_{f_I}(x) ~~~ \text{for all } I \subseteq\{1,\ldots,d\}\,, I\ne \emptyset\,.
\end{align}
Further, using that \(H\) and \(H'\) are grounded, it follows that \(f(0) \overline{H}(0) = f(0) \overline{H'}(0)\,.\) Hence, we obtain \(\pi_f(\overline{H})\leq \pi_f(\overline{H'})\,.\) For the reverse direction, choose for \(x\in \R_+^d\) the \(\Delta\)-monotone function \(f(y)=\1_{\{x<y\}}\,.\) Then \(\nu_f=\delta_x\) and \(\nu_{f_I}\equiv 0\) for all \(I\subsetneq \{1,\ldots,d\}\,.\) Hence, \(\pi_f(\overline{H})\leq \pi_f(\overline{H'})\) implies \(\overline{H}(x)\leq \overline{H'}(x)\,.\)\\
The proof of Statement \eqref{cgoo1} follows similarly using the inclusion-exclusion principle and the fact that \(H(x)=\overline{\overline{H}}(x)\,.\)
\end{proof}

\subsection{Proofs of the results from Appendix A}

\begin{proof}[Proof of Proposition \ref{lemfmi}:]
\eqref{lemfmi1}: Let \(x=(x_1,\ldots,x_d),y=(y_1,\ldots,y_d)\in \Xi\) such that \(x\leq y\,.\) If \(x_i=y_i\) for some \(i\in \{1,\ldots,d\}\,,\) then the statement is trivial, see Remark \ref{remindmeas}\eqref{eqremindmeas2}. So, assume that \(x_i< y_i\) for all \(i\,.\) Then we obtain
\begin{align}
\nonumber \nu_f\bigg(\bigtimes_{i=1}^d [x_i,y_i)\bigg)
&= \nu_f\bigg( \bigcup_{\eta_1\in (0,y_1-x_1)}\cdots \bigcup_{\eta_d\in (0,y_d-x_d)} \bigtimes_{i=1}^d [x_i,y_i-\eta_i]\bigg) \\
\nonumber &= \lim_{\eta_1\downarrow 0} \cdots \lim_{\eta_d \downarrow 0} \nu_f\bigg( \bigtimes_{i=1}^d  [x_i,y_i-\eta_i]\bigg)\\
\nonumber &= \lim_{\eta_1\downarrow 0} \cdots \lim_{\eta_d \downarrow 0} \lim_{\delta_i,\gamma_i\downarrow 0\atop i\in \{1,\ldots,d\}} \Delta_{a\vee(x-\delta),b\wedge(y-\eta+\gamma)}[f] \\
\label{eqprolem1}&= \Delta_{x,y}[f_-] = \Delta_{x,y} [f] = \Delta_{\varepsilon_1}^1 \cdots \Delta_{\varepsilon_d}^d f(x)\,,
\end{align}
where \(\eta=(\eta_1,\ldots,\eta_d)\,,\) \(\delta=(\delta_1,\ldots,\delta_d)\,,\) and \(\gamma=(\gamma_1,\ldots,\gamma_d)\,.\) The second equality follows from the continuity of measures.  The third equality holds by definition of \(\nu_f\,,\) see \eqref{defmind}. The fifth equality follows from the left-continuity of \(f\,,\) where \(f_-\) denotes the componentwise left-continuous version of \(f\,,\) which exists by assumption.
The last equality holds true by the definition of \(\Delta_{x,y}[f]\) in \eqref{defdvdo}.
To show the fourth equality, fix a component \(i\in \{1,\ldots,d\}\) and let \(z_j\in \Xi_j\,,\) for \(j\ne i\,.\) Then, by \eqref{defdvdo}, we obtain for the difference operator applied to the \(i\)-th component of \(f\) with \(h:=f(z_1,\ldots,z_{i-1},\cdot,z_{i+1},\ldots,z_d)\) that
\begin{align*}
\lim_{\eta_i\downarrow 0}\lim_{\delta_i,\gamma_i\downarrow 0} \Delta_{x_i-\delta_i,y_i-\eta_i+\gamma_i}[h] 
&= \lim_{\eta_i\downarrow 0} \lim_{\delta_i,\gamma_i\downarrow 0} \left[h(y_i-\eta_i+\gamma_i)-h(x_i-\delta_i)\right]\\
&= \lim_{\eta_i\downarrow 0}\lim_{\gamma_i\downarrow 0} h(y_i-\eta_i+\gamma_i)-\lim_{\delta_i\downarrow 0} h(x_i-\delta_i)\\
&= \lim_{\eta_i\downarrow 0} h_+(y_i-\eta_i) - h_-(x_i) \\
&= h_-(y_i)-h_-(x_i) = \Delta_{x_i,y_i}[h_-]
\end{align*}
where we use for the fifth equality that the left-hand limit at \(y_i\) coincides with the left-hand limit of the right-hand limits below \(y_i\,.\) \\
Statement \eqref{lemfmi2} follows similarly to \eqref{lemfmi1}.\\
\eqref{lemfmi3}: As a consequence of \eqref{lemfmi1} and \eqref{lemfmi2}, the boundary of each cuboid \(\bigtimes_{i=1}^d [x_i,y_i]\) has zero \(\nu_f\)-measure which implies the statement.
\end{proof}

\begin{proof}[Proof of Lemma \ref{lemunimifun}:]
Assume that \(f\) is measure-inducing.
If \(x,y\) with \(x\leq y\) are inner points of \(\Xi\,,\) then it follows that
\begin{align}\label{proolemunimifun1}
\nu_f([x,y])=D_{x,y}[f]=D_{x,y}[f_+]=D_{x,y}[f_-]
\end{align}
because \(\nu_f\) depends by definition of \(D_{x,y}\) in \eqref{defitlimi} only on the componentwise right-hand and left-hand limits of \(f\) which exist by definition of a measure-inducing function in \eqref{defmind}.
If at least one of \(x,y\in \Xi\) is a boundary point of \(\Xi\,,\) then \eqref{proolemunimifun1} also holds true under the additional continuity assumption on the boundary. (Note that, by \eqref{defitlimi}, \(\nu_f([x,y])\) depends on the value of \(f\) at \(x_i\) or \(y_i\) whenever \(x_i=a_i\) and \(y_i=b_i\,,\) respectively, and thus the continuity assumption cannot be omitted).
 Hence, also \(f_+\) and \(f_-\) are measure-inducing and induce, in particular, the same measure.

If \(f_+\) or \(f_-\) is measure-inducing, the statement follows similarly.
\end{proof}

\begin{proof}[Proof of Proposition \ref{propmeatraf}:]
In order to prove that \(f\circ (\phi_1^{-1},\ldots,\phi_d^{-1})\) is measure-inducing, we need to show due to Definition \ref{defmifun} that there exists a signed measure \(\eta\) on \(\bigtimes_{i=1}^d \phi_i(\Xi_i)\) such that
\begin{align*}
D_{u,v}[f\circ (\phi_1^{-1},\ldots,\phi_d^{-1})] = \lim_{\delta_i,\varepsilon_i\downarrow 0\atop i\in \{1,\ldots,d\}} \eta\left(\bigtimes_{i=1}^d [u_i-\delta_i,v_i+\varepsilon_i]\right)
\end{align*}
for all \(u=(u_1,\ldots,u_d),v=(v_1,\ldots,v_d)\in \bigtimes_{i=1}^d \phi_i(\Xi_i)\) with \(u\leq v\,.\) We show that \(\eta=\nu_f^{(\phi_1,\ldots,\phi_d)}\) is the correct choice, which implies \eqref{eqpropmeatraf}.

Define \(x:=\phi^{-1}(u):=(\phi_1^{-1}(u_1),\ldots,\phi_d^{-1}(u_d))\) and \(y:=\phi^{-1}(v):=(\phi_1^{-1}(v_1),\ldots,\phi_d^{-1}(v_d))\) for such \(u\) and \(v\,.\)  Then, it follows that
\begin{align*}
D_{u,v}[f\circ(\phi_1^{-1},\ldots,\phi_d^{-1})] &= D_{\phi^{-1}(u),\phi^{-1}(v)} [f]\\
&= D_{x,y}[f] \\
&= \nu_f\left(\bigcap_{\delta_i,\varepsilon_i>0 \atop i\in \{1,\ldots,d\}} \bigtimes_{i=1}^d[x_i-\delta_i,y_i+\varepsilon_i]\right) \\
&= \nu_f\left(\bigcap_{\delta_i,\varepsilon_i>0 \atop i\in \{1,\ldots,d\}} \bigtimes_{i=1}^d [\phi_i^{-1}(u_i-\delta_i),\phi_i^{-1}(y_i+\varepsilon_i)]\right)\\
&= \nu_f^{(\phi_1,\ldots,\phi_d)} \left(\bigcap_{\delta_i,\varepsilon_i>0\atop i\in \{1,\ldots,d\}} \bigtimes_{i=1}^d[u_i-\delta_i,y_i-\varepsilon_i]\right)\\
&= \lim_{\delta_i,\varepsilon_i\downarrow 0\atop i\in \{1,\ldots,d\}} \nu_f^{(\phi_1,\ldots,\phi_d)} \left(\bigtimes_{i=1}^d [u_i-\delta_i,y_i-\varepsilon_i]\right)
\,,
\end{align*}
where first equality follow from the definition of the difference operator \(D_{u,v}\) using that the transformations \(\phi_1^{-1},\ldots,\phi_d^{-1}\) are continuous and strictly increasing. For the third equality, we apply that \(f\) is measure-inducing.  For the fourth equality, we again use that \(\phi_1^{-1},\ldots,\phi_d^{-1}\) are continuous and strictly increasing. The fifth equality follows from the definition of the image of a (signed) measure, and the last equality holds true by the definition of the iterated limit in \eqref{eqappme}.
\end{proof}

\begin{proof}[Proof of Lemma \ref{lemmargdisgrou}:]
Let \(I=\{i_1,\ldots,i_k\}\subseteq \{1,\ldots,d\}\) and \(x_j\in \Xi_j\,,\) for \(j\in I\,.\) Define \(y_j:=x_j\) for \(j\in I\) and \(y_j:=b_j=\sup \Xi_j\) for \(j\ne I\,.\) Then, it follows with \(a_i=\inf \Xi_i\,,\) \(i=1,\ldots,d\,,\) that
\begin{align*}
\nu_{f_+}^I\left(\bigtimes_{i\in I}(a_{i},x_{i}]\right)
= \nu_{f_+}\left( \bigtimes_{i=1}^d ((a_i,y_i]\cap \Xi_i)\right) &=  \lim_{x_i\uparrow b_i\atop i\ne I} f_+(x_1,\ldots,x_d)\\
&= f_+^I(x_{i_1},\ldots,x_{i_k}) =\nu_{f_+^I}\left(\bigtimes_{i\in I}(a_{i},x_{i}]\right) 
\end{align*}
where the first equality follows from the definition of the \(I\)-marginal signed measure.
For the second equality, we apply Proposition \ref{lemfmi}\eqref{lemfmi2} and use continuity condition \eqref{contboun2} which holds by assumption for \(f\) and thus also for \(f_+\,.\)
The third equality holds by definition of the upper \(I\)-marginal in \eqref{defqmarg}.
The last equality follows from Proposition \ref{lemfmi}\eqref{lemfmi1} using that \(f\) and thus \(f_+\) as well as \(f_+^I\) are grounded.
Applying Lemma \ref{lemunimifun}, we obtain that \(\nu_{f^I}=\nu_{f_+^I}=\nu_{f_+}^I = \nu_f^I\,.\) This yields \eqref{lemmargdisgrou2} and \(f^I\in \cF_{mi}^{c,l}(\Xi_I)\) which implies \eqref{lemmargdisgrou1} noting that \(f_I\) induces the null-measure for all \(I\subsetneq \{1,\ldots,d\}\) because \(f\) is grounded.
%
\end{proof}

\begin{proof}[Proof of Lemma \ref{lemFfgr}:]
Assume w.l.o.g. that \(x_1\to a_1\,.\) Then, for \(x_j\in \Xi_j\,,\) \(j\in \{2,\ldots,d\}\,,\) it follows that
\begin{align*}
\lim_{x_1\downarrow a_1} F_f(x_1,x_2,\ldots,x_d)&= \lim_{x_1\downarrow a_1}  f(x_1,x_2,\ldots,x_d) - f_{\{2,\ldots,d\}}(x_2,\ldots,x_d) \\
&~~~- \sum_{I\subsetneq\{2,\ldots,d\}\atop I=\{i_1,\ldots,i_k\}} (-1)^{d-|I|} \lim_{x_1\downarrow a_1}  f_{I\cup \{1\}}(x_1,x_{i_1},\ldots,x_{i_k}) \\
&~~~- \sum_{I\subsetneq\{2,\ldots,d\}\atop I=\{i_1,\ldots,i_k\}} (-1)^{d-|I|-1} f_I(x_{i_1},\ldots,x_{i_k})\\
&=0\,,
\end{align*}
where the last equality follows from the definition of the lower \(I\)-marginal, i.e., \(\lim_{x_1\downarrow a_1} f(x_1,x_2,\ldots,x_d)=f_{\{2,\ldots,d\}}(x_2,\ldots,x_d)\) and \(\lim_{x_1\downarrow a_1} f_{I\cup \{1\}}(x_1,x_{i_1},\ldots,x_{i_k})= f_{I}(x_{i_1},\ldots,x_{i_k})\) for all \(I\subsetneq \{2,\ldots,d\}\,.\)
\end{proof}

\begin{proof}[Proof of Theorem \ref{propbhkv}:]
Assume \eqref{propbhkv1}. Let \(I\subseteq \{1,\ldots,d\}\,,\) \(I\ne \emptyset\,.\)
Since \(f\) has bounded Hardy-Krause variation, also \(f_I\) has bounded Hardy-Krause variation. Hence, also \(F^{(f)_I}\) defined by \eqref{defFf} has bounded Hardy-Krause variation. Due to \cite[Theorem 3a]{Aistleitner-2015}, \(F^{(f)_I}\) generates a signed measure \(\nu\) by \(\nu([0,x])=F^{(f)_I}(x)\,,\) \(x\in [0,1]^d\,,\) using that \(f\) is right-continuous.
Hence, \((f)_I\) induces a signed measure \(\nu\) on \([0,1]^{|I|}\) by \(\nu([0,x])=\Delta_{0,x}[(f)_I]\,,\) which implies by Lemma \ref{lemunimifun} that \(f_I\) is measure-inducing. This implies \eqref{propbhkv2}.
Since, by \cite[Lemma 2]{Aistleitner-2015}, \(f\) also has bounded Hardy-Krause variation anchored at \((1,\ldots,1)\,,\) it follows similarly that \(f^I\) is measure-inducing, which implies \eqref{propbhkv3}.

If \eqref{propbhkv2} or \eqref{propbhkv3} holds then \eqref{propbhkv1} follows with \cite[Theorem 3b]{Aistleitner-2015} and \cite[Lemma 2]{Aistleitner-2015} using that \(f_I\) respectively \(f^I\) is measure-inducing and right-continuous for all \(I\subseteq\{1,\ldots,d\}\,,\) \(I\ne \emptyset\,.\)
\end{proof}

\begin{proof}[Proof of Theorem \ref{thecfmicl}:]
\eqref{thecfmicl1} \(\Longrightarrow\) \eqref{thecfmicl2}: By Proposition \ref{propcharmif}, there exist for all \(I=\{i_1,\ldots,i_k\}\subseteq\{1,\ldots,d\}\,,\) \( I\ne \emptyset\,,\) uniquely determined right-continuous, \(\Delta\)-monotone, and grounded functions \(g_{(I)},h_{(I)}\colon \R_+^{k}\to \R\,,\) at least one of them is bounded, such that 
\begin{align*}
g_{(I)}(0)&=h_{(I)}(0)=0\,, &&\\
 F^{f_I}(x) &=g_{(I)}(x)-h_{(I)}(x) && \text{for all } x\in \R_+^k\,,\\
 \Var(F^{f_I}|_{[0,x]})&=\Var(g|_{[0,x]})+\Var(h|_{[0,x]})<\infty~&&\text{for all } x\in \R_+^k\,.
\end{align*}
Since, by definition of \(F^{f_I}\,,\) it holds that
\begin{align*}
F^{f_I}(x_{i_1},\ldots,x_{i_k}) = f_I(x_{i_1},\ldots,x_{i_k})- \sum_{J\subsetneq I\atop J=\{j_1,\ldots,j_m\}} (-1)^{|I|-|J|-1} f_J(x_{j_1},\ldots,x_{j_m})\,,
\end{align*}
it follows for all \(x=(x_1,\ldots,x_d)\in \R_+^d\) that
\begin{align*}
f(x)&= F^f(x) + \sum_{I\subsetneq \{1,\ldots,d\}\atop I=\{i_1,\ldots,i_k\}} (-1)^{d-|I|-1} f_I(x_{i_1},\ldots,x_{i_k})\\
&= F^f(x) + \sum_{I\subsetneq \{1,\ldots,d\}\atop I=\{i_1,\ldots,i_k\}} (-1)^{d-|I|-1} \Big( F^{f_I}(x_{i_1},\ldots,x_{i_k})+\sum_{J\subsetneq I\atop J=\{j_1,\ldots,j_l\}} (-1)^{|I|-|J|-1} f_J(x_{j_1},\ldots,x_{j_l})\Big)\\
&= f_\emptyset+ \sum_{I\subseteq \{1,\ldots,d\}\atop {I=\{i_1,\ldots,i_k\}\atop I\ne \emptyset}} F^{f_I}(x_{i_1},\ldots,x_{i_k})\\
&=  f(0)+ \sum_{k=1}^d \sum_{I\subseteq \{1,\ldots,d\}\atop I=\{i_1,\ldots,i_k\}} \left(g_{(I)}(x_{i_1},\ldots,x_{i_k}) - h_{(I)}(x_{i_1},\ldots,x_{i_k})\right)\,.
\end{align*}
This proves \eqref{thecfmicl2}.\\
\eqref{thecfmicl2} \(\Longrightarrow\) \eqref{thecfmicl1}: Since \(g_{(I)}\) and \(h_{(I)}\) are right-continuous for all \(I\subseteq\{1,\ldots,d\}\,,\) \(I\ne \emptyset\,,\) also \(f\) defined by \eqref{eqthecfmicl2} is right-continuous. Hence, \(f\) fulfils the continuity condition \eqref{contboun1}. Note that continuity condition \eqref{contboun2} is trivially satisfied. Further, since \(g_{(I)}\) and \(h_{(I)}\) are \(\Delta\)-monotone, it follows that all componentwise right- and left-hand limits of \(f_I\) exist for all  \(I\subseteq \{1,\ldots,d\}\,,\) \(I\ne \emptyset\,.\) It remains to show that \(f_I\) is measure-inducing for all \(I\subseteq \{1,\ldots,d\}\,,\) \(I\ne \emptyset\,.\) Hence, fix such an index set \(I\,.\) Due to \eqref{eqthecfmicl2}, \(f_I\) is given by
\begin{align*}
f_I(x_{i_1},\ldots,x_{i_k})= f(0) + \sum_{k=1}^{|I|} \sum_{J\subseteq I\atop J=\{j_1,\ldots,j_m\}} \left(g_{(J)}(x_{j_1},\ldots,x_{j_m}) - h_{(J)}(x_{j_1},\ldots,x_{j_m})\right)
\end{align*}
applying that \(g_{(L)}\) and \(h_{(L)}\) are grounded for \(L\) with \(I\subsetneq L\subseteq\{1,\ldots,d\}\,.\) This implies that
\begin{align*}
\Delta_{y,z}[f_I]=\Delta_{y,z}[g_{(I)}]-\Delta_{y,z}[h_{(I)}] ~~~ \text{for all } y,z\in \R_+^d  \text{ such that } y\leq z\,.
\end{align*}
Then, the equivalence of \eqref{propcharmif5} and \eqref{propcharmif1} in Proposition \ref{propcharmif} yields that \(f_I\) is measure-inducing using that \(g_{(I)}\) and \(h_{(I)}\) are \(\Delta\)-monotone and thus \(|I|\)-increasing and that at least one of \(g_{(I)}\) and \(h_{(I)}\) is bounded.
\end{proof}

For the proof of Proposition \ref{propcharmif}, we make use of the following lemma.

\begin{lemma}\label{prophjd}
For a function \(f\colon \R_+^d \to \R\) with \(f(0)=0\,,\) the following statements are equivalent.
\begin{enumerate}[(i)]
\item \label{prophjd1} \(f\) generates a signed measure \(\nu\) on \(\R_+^d\) by
\begin{align}\label{defmgfn}
\nu([0,x])=f(x)~~~ \text{for all } x\in \R_+^d\,,
\end{align}
\item \label{prophjd2} there exist uniquely determined right-continuous, \(\Delta\)-monotone functions \(f_\Delta^+,f_\Delta^- \colon \R_+^d \to \R\,,\) at least one of which is bounded, such that \(f_\Delta^+(0)= f_\Delta^-(0) = 0\,,\)
\begin{align*}
f(x) &= f_\Delta^+(x)-f_\Delta^-(x)~~~ \text{and}\\
\Var(f|_{[0,x]}) &= \Var(f_\Delta^+|_{[0,x]}) + \Var(f_\Delta^-|_{[0,x]}) < \infty
\end{align*}
for all \(x\in \R_+^d\,.\)
\end{enumerate}
\end{lemma}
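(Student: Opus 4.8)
The plan is to realize the decomposition in (ii) as the Jordan decomposition of the signed measure generated by $f$, and to obtain both existence and uniqueness from the mutual singularity of $\nu^+$ and $\nu^-$. For $(i)\Rightarrow(ii)$, let $\nu$ satisfy $\nu([0,x])=f(x)$, take its Jordan decomposition $\nu=\nu^+-\nu^-$, and assume w.l.o.g.\ that $\nu^-$ is finite. Set $f_\Delta^+(x):=\nu^+([0,x])$ and $f_\Delta^-(x):=\nu^-([0,x])$. Since $f(x)=\nu^+([0,x])-\nu^-([0,x])\in\R$ and $\nu^-([0,x])\le\nu^-(\R_+^d)<\infty$, both $f_\Delta^\pm$ are real-valued, $f_\Delta^-$ is bounded, $f=f_\Delta^+-f_\Delta^-$, and $\nu^+([0,x])=f(x)+\nu^-([0,x])<\infty$, so $\nu^+$ is finite on every cuboid $[0,x]$. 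Right-continuity of $f_\Delta^\pm$ then follows from continuity of measures along cuboids decreasing to $[0,x]$; $f_\Delta^\pm(0)=\nu^\pm(\{0\})=0$ follows from $\nu(\{0\})=f(0)=0$ together with a Hahn decomposition; and a direct inclusion--exclusion computation gives, for $I=\{i_1,\dots,i_k\}$, $\Delta_{\varepsilon_1}^{i_1}\cdots\Delta_{\varepsilon_k}^{i_k}f_\Delta^\pm(x)=\nu^\pm\bigl(\textstyle\prod_{l=1}^{k}(x_{i_l},x_{i_l}+\varepsilon_l]\times\prod_{j\notin I}[0,x_j]\bigr)\ge 0$, so $f_\Delta^\pm$ are $k$-increasing for every $k$, i.e.\ $\Delta$-monotone.

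The heart of the matter is the Hardy--Krause variation identity. For $\emptyset\ne I=\{i_1,\dots,i_k\}\subseteq\{1,\dots,d\}$ and $x\in\R_+^d$, put $B_I^x:=\prod_{i\in I}(0,x_i]\times\prod_{j\notin I}\{0\}$. A lower marginal of a $\Delta$-monotone function is again $\Delta$-monotone, so $(f_\Delta^\pm)_I$ has non-negative increments and its Vitali variation over $[0,x_I]$ telescopes; by the identity above, $\V\bigl((f_\Delta^\pm)_I|_{[0,x_I]}\bigr)=\Delta_{0,x_I}[(f_\Delta^\pm)_I]=\nu^\pm(B_I^x)$. For $f$ itself one has $f_I(x_I)=\nu\bigl(\prod_{i\in I}[0,x_i]\times\{0\}_{I^c}\bigr)$ by continuity of measures, hence $\Delta_{u,v}[f_I]=\nu\bigl(\prod_{i\in I}(u_i,v_i]\times\{0\}_{I^c}\bigr)$ and $\V(f_I|_{[0,x_I]})=\sup_{\cP^I}\sum|\nu(\cdot)|$ over grid partitions of $B_I^x$. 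Since $|\nu|(B_I^x)<\infty$, half-open grid boxes generate the Borel sets of the slice, and $\nu^+\perp\nu^-$ (so their restrictions to the slice remain mutually singular), a standard approximation of $|\nu|$ of a box by finite unions of grid boxes yields $\V(f_I|_{[0,x_I]})=|\nu|(B_I^x)=\nu^+(B_I^x)+\nu^-(B_I^x)$. Summing over all nonempty $I$, which by \eqref{defHKvar} is exactly $\Var(\cdot|_{[0,x]})$, gives $\Var(f|_{[0,x]})=\Var(f_\Delta^+|_{[0,x]})+\Var(f_\Delta^-|_{[0,x]})<\infty$, and (ii) follows.

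For $(ii)\Rightarrow(i)$, observe that each $f_\Delta^\pm$ is right-continuous, vanishes at $0$, is non-negative (being monotone in each coordinate with $f_\Delta^\pm(0)=0$), and has non-negative increments over every rectangle --- including those abutting a lower-dimensional face of $\R_+^d$, by $k$-increasingness for all $k$. Hence the standard Lebesgue--Stieltjes construction produces a non-negative measure $\mu^\pm$ with $\mu^\pm([0,x])=f_\Delta^\pm(x)$, one of which is finite, and $\nu:=\mu^+-\mu^-$ is a signed measure with $\nu([0,x])=f(x)$.

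For uniqueness, suppose $f=g^+-g^-$ is a second decomposition as in (ii); then $g^\pm$ generate non-negative measures $\mu_{g^\pm}$ with $\mu_{g^+}-\mu_{g^-}=\nu$, so minimality of the Jordan decomposition forces $\nu^+\le\mu_{g^+}$ and $\nu^-\le\mu_{g^-}$. The variation identity applied to $g^\pm$ and read off on the slice boxes as above gives $\mu_{g^+}(B_I^x)+\mu_{g^-}(B_I^x)=|\nu|(B_I^x)=\nu^+(B_I^x)+\nu^-(B_I^x)$ for all $I$ and $x$, whence $\mu_{g^\pm}(B_I^x)=\nu^\pm(B_I^x)$. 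As the slice boxes $B_I^x$, over all $\emptyset\ne I\subseteq\{1,\dots,d\}$ and $x\in\R_+^d$, generate $\cB(\R_+^d)$, it follows that $\mu_{g^\pm}=\nu^\pm$ and therefore $g^\pm=\nu^\pm([0,\cdot])=f_\Delta^\pm$. The one nonroutine point is the approximation of the total variation of $\nu$ of a box by grid partitions (used for both the variation identity and the uniqueness argument); the remaining ingredients are standard properties of signed measures and of $\Delta$-monotone functions.
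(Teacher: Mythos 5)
Your proof is correct, but it takes a genuinely more self-contained route than the paper's. The paper restricts \(f\) to each compact box \([0,y]\), invokes the compact-domain theorem of Aistleitner and Dick (Theorems 3a/3b of that reference, transported via Proposition \ref{propmeatraf}) to obtain unique local decompositions \(f_y^\pm=\nu_y^\pm([0,\cdot])\), and then glues these by setting \(f_\Delta^\pm(x):=f_x^\pm(x)\), using the consistency of the family \((f_y^\pm)_y\). You instead work globally: you define \(f_\Delta^\pm:=\nu^\pm([0,\cdot])\) directly from the Jordan decomposition on all of \(\R_+^d\) and verify each property by hand --- \(\Delta\)-monotonicity by inclusion--exclusion, and the key identity \(\V(f_I|_{[0,x_I]})=|\nu|(B_I^x)\) on your slice boxes by Hahn decomposition plus approximation of Borel sets by grid boxes. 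Since the Jordan decomposition commutes with restriction, the paper's local \(f_y^\pm\) are exactly the restrictions of your \(f_\Delta^\pm\), so the two constructions yield the same objects; the difference is purely architectural. The paper outsources the hard step (that the supremum of \(\sum|\nu(\cdot)|\) over grid partitions attains the total variation) to the cited theorem, whereas you sketch it yourself and in exchange avoid the restriction-and-glue step; your uniqueness argument (Jordan minimality \(\nu^\pm\le\mu_{g^\pm}\) combined with the variation identity) is likewise more explicit than the paper's appeal to the uniqueness clause of the cited result. The one step you leave unproved --- approximating \(|\nu|\) of a slice by grid partitions --- is exactly the content you would otherwise import; it does require a careful argument (finiteness of \(|\nu|\) on \([0,x]\), a Hahn set, and approximation of Borel sets in \(|\nu|\)-measure by finite unions of half-open boxes from the generating semiring, then refinement to a common grid), but all the ingredients you name are the right ones, so there is no gap of substance.
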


\begin{proof}[Proof of Lemma \ref{prophjd}:]
\eqref{prophjd1} \(\Longrightarrow\) \eqref{prophjd2}: For \(y\in \R_+^d\,,\) consider the function \(f_y:=f_{|[0,y]}\,.\) Denote by \(\nu_y\) the signed measure \(\nu\) restricted to \([0,y]\,,\) i.e., \(\nu_y(A):=\nu(A\cap [0,y])\) for all Borel sets \(A\subset \R_+^d\,.\) Then it holds for all \(x\in [0,y]\) that
\begin{align*}
f_y(x)= f(x) = \nu([0,x])=\nu_y([0,x])\,.
\end{align*}
Hence, similar to \eqref{eqremindmeas1}, \(\nu_y\) is a finite signed measure.

Due to \cite[Theorem 3b]{Aistleitner-2015} and a transformation by Proposition \ref{propmeatraf}, there exist uniquely determined \(\Delta\)-monotone functions \(f_y^+,f_y^-\colon [0,y]\to \R\) with \(f_y^+(0)=f_y^-(0)=0\) such that
\begin{align}
\nonumber f_y(x) &= f_y^+(x)-f_y^-(x) ~~~ \text{for all } x\in [0,y]\,,\\
\label{prprophjd} f_y^+(x)&= \nu_y^+([0,x]) ~~~\text{and} ~~~ f_y^-(x)=\nu_y^-([0,x])~~~\text{for all } x\in [0,y]\,,\\
\nonumber \Var(f_y)&= \Var(f_y^+)+\Var(f_y^-)<\infty
\end{align}
noting that \(f_y(0)=f(0)=0\) and \((\nu_y^+,\nu_y^-)\) is the Jordan-decomposition of \(\nu_y\,.\) Since \(f\) and thus \(f_y\) are right-continuous, it follows from \cite[Proof of Theorem 3a]{Aistleitner-2015} that also \(f_y^+\) and \(f_y^-\) are right-continuous.

Now, define \(f_\Delta^+,f_\Delta^-\colon \R_+^d \to \R\) by \(f_\Delta^+(x):=f_x^+(x)\) and \(f_\Delta^-(x):= f_x^-(x)\,.\) Then \(f_\Delta^+\) and \(f_\Delta^-\) are right-continuous and \(\Delta\)-monotone with \(f_\Delta^+(0)=f_\Delta^-(0)=0\) and it holds that
\begin{align*}
f(x)&=f_x(x)=f_\Delta^+(x)-f_\Delta^-(x)\,,\\
\Var(f|_{[0,x]}) &= \Var(f_\Delta^+|_{[0,x]}) + \Var(f_\Delta^-|_{[0,x]}) < \infty
\end{align*}
for all \(x\in \R_+^d\,.\) The uniqueness of \(f_\Delta^+\) and \(f_\Delta^-\) follows from the uniqueness of \(f_y^+\) and \(f_y^-\) together with the property that \(f_\Delta^+|_{[0,y]}=f_y^+\) and \(f_\Delta^-|_{[0,y]}=f_y^-\) for all \(y\in \R_+^d\,.\) Since at least one of the measures \(\nu^+\) and \(\nu^-\) is finite, it follows from \eqref{prprophjd} that at least one of the families \((f_y^+)_{y\geq 0}\) and \((f_y^-)_{y\geq 0}\) is bounded. This implies that at least one of the functions \(f_\Delta^+\) and \(f_\Delta^-\) is bounded.

\eqref{prophjd2} \(\Longrightarrow\) \eqref{prophjd1}: For \(y\in \R_+^d\,,\) the function \(f_y:=f|_{[0,y]}\) is right-continuous and has bounded Hardy-Krause variation. Due to \cite[Theorem 3a]{Aistleitner-2015} and a transformation by Proposition \ref{propmeatraf}, there exists a unique signed measure \(\nu_y\) such that
\begin{align}
\nu_y([0,x])&=f_y(x) \,,\\
\nu_y^+([0,x])&=f_y^+(x) ~~~\text{and}~~~ \nu_y^-([0,x])=f_y^-(x)
\end{align}
for all \(x\in [0,y]\,,\) where \((\nu_y^+,\nu_y^-)\) is the Jordan decomposition of \(\nu_y\) and where \(f_y=f_y^+-f_y^-\) with \(f_y^+:=f_\Delta^+|_{[0,y]}\) and \(f_y^-:=f_\Delta^-|_{[0,y]}\) is the uniquely determined so-called Jordan decomposition of the function \(f\,.\) For all \(x\in \R_+^d\,,\) define 
\begin{align*}
\nu^+([0,x])&:=\nu_x^+([0,x])=f_x^+(x)=f_\Delta^+(x) ~~~\text{and} \\
\nu^-([0,x])&:=\nu_x^-([0,x])=f_x^-(x)=f_\Delta^-(x) \,.
\end{align*}
Since \(f_\Delta^+\) and \(f_\Delta^-\) are by assumption \(\Delta\)-monotone, it follows that \(\nu^+\) and \(\nu^-\) define (non-negative) measures on \(\R_+^d\) where at least one of which is finite because at least one of the functions \(f_\Delta^+\) and \(f_\Delta^-\) is bounded. Hence, \(\nu:=\nu^+-\nu^-\) defines a signed measure with the property that
\begin{align*}
\nu([0,x])=\nu^+([0,x])-\nu^-([0,x])=f_\Delta^+(x)-f_\Delta^-(x) = f(x)
\end{align*}
for all \(x\in \R_+^d\,,\) which concludes the proof.
\end{proof}

\begin{proof}[Proof of Proposition \ref{propcharmif}:]
\eqref{propcharmif1} \(\Longrightarrow\) \eqref{propcharmif2}: Since \(f\) is measure-inducing, it follows by \eqref{defitlimi} for \(0=(0,\ldots,0)\in \R_+^d\)
for all \(x=(x_1,\ldots,x_d)\in \R_+^d\) that
\begin{align}\label{eqproopropcharmif1}
\begin{split}
\nu_f([0,x])=D_{0,x}[f]=\Delta_{0,x}[f]=f(x)-\sum_{k=0}^{d-1}\sum_{I\subsetneq \{1,\ldots,d\}\atop I=\{i_1,\ldots,i_k\}} (-1)^{d-k-1} f_I(x_{i_1},\ldots,x_{i_k}) = F_f(x)\,,
\end{split}
\end{align}
where we applied for the second equality that \(f\) is right-continuous. Hence, \(F^f\) 
generates the signed measure \(\nu=\nu_f\) by \(\nu [0,x]=F^f(x)\,,\) \(x\in \R_+^d\,.\) \\
\eqref{propcharmif2} \(\Longrightarrow\) \eqref{propcharmif3}: Assume that \(F^f\) generates a signed measure by \(\nu [0,x]=F^f(x)\,,\) \(x\in \R_+^d\,.\)
By Lemma \ref{lemFfgr}, \(F^f\) is grounded. Since \(f\) is right-continuous, also \(F^f\) is right-continuous.
Due to Lemma \ref{prophjd}, there exist uniquely determined, right-continuous, \(\Delta\)-monotone functions \(g,h\colon \R_+^d\to \R\) such that at least one of which is finite and \eqref{eqpropcharmif1}, \eqref{eqpropcharmif2}, and \eqref{eqpropcharmif3} are satisfied.
Since \(F^f\) is grounded, for all \(I\subsetneq\{1,\ldots,d\}\) the Vitali variation of the lower \(I\)-marginal of \(F^f\) and thus, by \eqref{eqpropcharmif3}, also the Vitali variation of \(g_I\) and \(h_I\) vanish. This implies together with \eqref{eqpropcharmif2} that \(g_I\equiv 0\) and \(h_I\equiv 0\,.\) Hence, \(g\) and \(h\) are grounded.
Equation \eqref{eqpropcharmif} follows from \eqref{eqpropcharmif2} and the definition of \(F^f\) because
\begin{align*}
\Delta_{x,y}[f]=\Delta_{x,y}[F^f]=\Delta_{x,y}[g]-\Delta_{x,y}[h]\,,
\end{align*}
using for the first equality that \(\Delta_{x,y}[f]=0\) whenever \(x_i=y_i=0\) for \(i\in \{1,\ldots,d\}\setminus I\,,\) see Remark \ref{remindmeas}\eqref{eqremindmeas2}. The second equality follows from \eqref{eqpropcharmif2}.\\
\eqref{propcharmif3} \(\Longrightarrow\) \eqref{propcharmif5}: The statement is trivial because \(\Delta\)-monotone functions are \(d\)-increasing.\\
\eqref{propcharmif5} \(\Longrightarrow\) \eqref{propcharmif1}: The functions \(F^g\) and \(F^h\) defined by \eqref{defFf} are \(d\)-increasing and grounded, and at least one of them is bounded. Hence, \(F^g\) and \(F^h\) induce (non-negative) measures \(\eta_g\) and \(\eta_h\,,\) at least one which is finite. It follows for all \(x,y\in \R_+^d\) with \(x\leq y\) that
\begin{align*}
D_{x,y}[f]=D_{x,y}[g]-D_{x,y}[h] = D_{x,y}[F^g]-D_{x,y}[F^h]=\eta_g([x,y])-\eta_h([x,y])\,.
\end{align*}
Thus \(f\) induces by \eqref{defmind} a signed measure \(\nu:=\eta_g-\eta_h\,.\) 
\end{proof}

\begin{proof}[Proof of Lemma \ref{lemsurvmarg}:]
Assume w.l.o.g. that \(I=\{1,\ldots,k\}\) for some \(k\in \{1,\ldots,d\}\,.\) Let \((x_1,\ldots,x_d)\in \Xi\,.\) Then the definition of the upper \(I\)-marginal and the definition of the survival function imply
\begin{align*}
f^I(x_1,\ldots,x_k) &= f(x_1,\ldots,x_k,b_{k+1},\ldots,b_d)~~~\text{and}\\
\overline{f^I}(x_1,\ldots,x_k)&= \sum_{J\subseteq \{1,\ldots,k\}\atop {y_i=b_i ~\text{if } i\in J \atop y_i=x_i ~\text{if } i\notin J}} (-1)^{k-|J|} f^I(y_1,\ldots,y_k)\,.
\end{align*}
It holds that
\begin{align*}
\overline{f}(x_1,\ldots,x_d) &= \sum_{J\subseteq \{1,\ldots,d\} \atop {y_i=b_i ~\text{if } i\in J \atop y_i=x_i ~\text{if } i\notin J}} (-1)^{d-|J|} f(y_1,\ldots,y_d) \\
&= \sum_{J'\subseteq \{1,\ldots,k\}}\sum_{J''\subseteq \{k+1,\ldots,d\}} (-1)^{d-|J'|-|J''|} f(z_1,\ldots,z_d)
\end{align*}
for \(z_i=b_i\) if \(i\in J'\cup J''\) and \(z_i=x_i\) else. This implies for the lower \(I\)-marginal of \(\overline{f}\) that
\begin{align*}
\overline{f}_I(x_1,\ldots,x_k) &= \sum_{J'\subseteq \{1,\ldots,k\} \atop {y_i=b_i ~\text{if } i\in J' \atop y_i=x_i ~\text{if } i\notin J'}} (-1)^{d-|J'|-(d-k)} f(y_1,\ldots,y_k,b_{k+1},\ldots,b_d) \\
&= \sum_{J\subseteq \{1,\ldots,k\} \atop {y_i=b_i ~\text{if } i\in J \atop y_i=x_i ~\text{if } i\notin J}} (-1)^{k-|J|} f^I(y_1,\ldots,y_k)\\
&= \overline{f^I}(x_1,\ldots,x_k)
\end{align*}
where we applied for the first equality the definition of the lower \(I\)-marginal as well as groundedness of \(f\,.\)
\end{proof}

\begin{proof}[Proof of Corollary \ref{cordelmof}:]
If \(f\) is \(\Delta\)-monotone/-antitone, it is componentwise increasing/decreasing. Hence, its right-continuous version \(f_+\) exists and is also \(\Delta\)-monotone/-antitone. Since for \(I=\{i_1,\ldots,i_k\}\subseteq \{1,\ldots,d\}\,,\) \((f_+)_I\) and \((-1)^k (f_+)_I\,,\) respectively, is \(k\)-increasing, \((f_+)_I\) is by Proposition \ref{propcharmif} measure-inducing and, in particular, induces a non-negative measure.
Applying the continuity condition \eqref{contboun1}, also \(f_I\) is by Lemma \ref{lemunimifun} measure-inducing. This yields that \(f\in  \cF_{mi}^{c,l}(\R_+^d)\,.\)
\end{proof}

\begin{proof}[Proof of Proposition \ref{propcdfun}:]
For \(I=\{i_1,\ldots,i_k\}\subseteq \{1,\ldots,d\}\,,\) it holds that
\begin{align*}
\V(f_I) \leq \int_{[0,1]^k} \left| \frac{\partial^k f_I(x)}{\partial x_1 \cdots \partial x_k} \right| \de x \,,
\end{align*}
see \cite[Proposition 13]{Owen-2005}, where the integral is finite due to the continuity of the derivatives. This implies by definition of the Hardy-Krause variation that
\begin{align*}
\Var(f) = \sum_{I\subseteq\{1,\ldots,d\}\atop I\ne \emptyset} \V(f) < \infty\,.
\end{align*}
Hence, the statement follows from Theorem \ref{propbhkv}.
\end{proof}



\nocite{*}

%

%





%
%
%

\end{document}